\newcommand{\F}{{\mathbb F}}
\newcommand{\PP}{{\mathbb P}}
\newcommand{\Q}{{\mathbb Q}}
\newcommand{\Z}{{\mathbb Z}}
\DeclareMathOperator{\rank}{rank}
\newcommand{\GL}{\operatorname{GL}}
\newcommand{\tors}{{\operatorname{tors}}}
\newtheorem{theorem}{Theorem}[section]
\newtheorem{proposition}[theorem]{Proposition}
\newtheorem{lemma}[theorem]{Lemma}
\newtheorem{corollary}[theorem]{Corollary}
\title{Torsion points of elliptic curves over multi-quadratic number fields}
\author{Koji Matsuda\thanks{University of Tokyo}}
\date{}
\begin{document}

\maketitle

\begin{abstract}
We compute the Mordell-Weil groups of the modular Jacobian varieties of hyperelliptic modular curves $X_1(M, MN)$
over every number field which is the composition of quadratic fields.
Also we prove criteria for the existence of elliptic curves over such number fields with prescribed torsion points
generalizing the results for quadratic number fields of Kamienny and Najman.
\end{abstract}

\section{Notation}

For an abelian group $A$ and a nonzero integer $n$, we denote $A[n^\infty] = \cup_{i \ge 1} A[n^i]$.
For positive integers $d_1, \dots, d_n$, we denote $\bigoplus_{i=1}^n \Z/d_i$ by $[d_1, \dots, d_n].$
For schemes $X/S$ and $T/S$, we denote the base change of $X$ with respect to $T/S$ by $X_T$.

\section{Introduction}

For an elliptic curve $E$ over a number field $K$, the Mordell-Weil group $E(K)$ is finitely generated,
in particular its torsion group is finite.
When $K = \Q$, Mazur classified which groups appear as the torsion part of that:

\begin{theorem}[\cite{Mazur}]
Let $E/\Q$ be an elliptic curve.
Then the torsion part $E(\Q)_\tors$ of its Mordell-Weil group is isomorphic to one of the following 15 groups:
\begin{align*}
& \Z/N\Z & \text{ for } & 1 \leq N \leq 12, N \neq 11, & \\
& \Z/2\Z \times \Z/2N\Z & \text{ for } & 1 \leq N \leq 4. &
\end{align*}
\end{theorem}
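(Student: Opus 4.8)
The plan is to translate the classification into a statement about rational points on modular curves and then to rule out, family by family, every torsion structure not on the list. First I would recall the moduli interpretation: an elliptic curve $E/\Q$ equipped with a rational point of exact order $N$ corresponds to a non-cuspidal point of the modular curve $Y_1(N)$, and a prescribed rational torsion subgroup corresponds to a rational point on a suitable $X_1(M,N)$. For every group appearing in the list, the associated modular curve has genus $0$ and carries a rational cuspidal point, hence is $\isom \PP^1$ and has infinitely many rational points; this produces infinitely many elliptic curves realizing each allowed group, so these genuinely occur. The whole content of the theorem is therefore the \emph{non-occurrence} of every other group, i.e. the assertion that the modular curves attached to the forbidden structures possess no non-cuspidal rational point.

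Next I would reduce the forbidden cases to a finite list plus one infinite family. Because a point of order $N$ produces points of every order $d \mid N$, it suffices to exclude rational points of prime-power order $16,25,27,49$, of the mixed type $\Z/2 \times \Z/2N$ with $N \geq 5$, and of prime order $p$ with $p = 11$ or $p \geq 13$; all larger forbidden groups follow by passing to a subgroup. For the low-genus curves among these — $X_1(11)$, $X_1(14)$, $X_1(15)$ of genus $1$ and $X_1(13)$, $X_1(16)$, $X_1(18)$ of genus $2$ — I would argue directly: compute the Mordell-Weil group of the Jacobian over $\Q$, show it has rank $0$, and then enumerate the finitely many rational points and check each is a cusp (for genus $2$ via the embedding $X_1(N) \injects J$ and a Chabauty--Coleman or explicit-torsion argument). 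This is exactly the kind of rank-$0$ Jacobian computation the present paper performs for $X_1(M,MN)$, so it is consonant with the surrounding machinery.

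The hard part is the infinite family of primes $p$, which I would handle uniformly by Mazur's Eisenstein ideal method applied to $J = J_0(p)$; this controls $X_1(p)$ because a non-cuspidal rational point of $X_1(p)$ maps to one of $X_0(p)$. Let $\T$ be the Hecke algebra acting on $J$ and let $\mathfrak{I} \subset \T$ be the Eisenstein ideal, modulo which $T_\ell \equiv 1+\ell$. The two key inputs are: (i) the rational cuspidal subgroup $C \subset J(\Q)$ is cyclic of order the numerator of $(p-1)/12$ and exhausts $J(\Q)_\tors$; and (ii) the Eisenstein quotient $\tilde{J} = J/\mathfrak{I}J$ is a nonzero abelian variety with $\tilde{J}(\Q)$ \emph{finite}, i.e. of rank $0$ over $\Q$. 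Granting these, a hypothetical non-cuspidal $x \in X_0(p)(\Q)$ yields, through the Abel--Jacobi map based at the cusp $\infty$ followed by projection to $\tilde{J}$, a rational torsion class; reducing modulo several auxiliary primes $\ell$ and using that reduction is injective on prime-to-$\ell$ torsion forces this class into the cuspidal group and ultimately forces $x$ to be a cusp, a contradiction. The finitely many exceptional primes for which $X_0(p)$ does carry an extra rational point are then checked separately to give curves with a rational $p$-isogeny but no rational $p$-torsion point, so they do not lift to $X_1(p)$.

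The principal obstacle is establishing the rank-$0$ statement (ii) together with the precise torsion computation (i). Controlling $J(\Q)_\tors$ demands a delicate study of the $\ell$-adic Galois representations on $J[\mathfrak{I}]$ and of the geometry of $X_0(p)$ in characteristic $p$ (its special fibre and the Shimura subgroup), while the vanishing of the rank is the deepest point, resting on the structure of $\tilde{J}$ as a quotient of the winding quotient. By contrast, I expect the prime-power cases $16,25,27,49$ and the $\Z/2 \times \Z/2N$ family to be routine once the prime case is secured, each disposed of by a rank-$0$ Mordell-Weil computation on the relevant modular Jacobian exactly as carried out in the body of this paper.
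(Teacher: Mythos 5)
The paper does not prove this statement at all: it is quoted as background from Mazur's work, so there is no internal proof to compare your attempt against, and your proposal must stand or fall as an outline of Mazur's own argument. It has the right overall shape (moduli interpretation, genus-$0$ realizability of the listed groups, reduction to minimal forbidden cases, Eisenstein ideal for prime orders), but there are genuine gaps. First, your reduction step is false as stated: excluding prime orders, the prime powers $16,25,27,49$, and the types $\Z/2\times\Z/2N$ does \emph{not} exclude the composite orders $14,15,18,20,21,24$, since every proper divisor of each of these is an allowed order, so none of them ``follow by passing to a subgroup.'' You quietly reinstate $14,15,18$ in your list of low-genus curves, but $20$, $21$ and $24$ (with $X_1(20)$ of genus $3$ and $X_1(21)$, $X_1(24)$ of genus $5$) are never handled anywhere in the proposal; historically these were separate cases, settled by Kubert and others, not corollaries of the prime case.

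Second, the prime case --- the entire substance of Mazur's theorem --- is sketched in a way that does not close. Granting that $\tilde{J}(\Q)$ has rank $0$ and that reduction is injective on prime-to-$\ell$ torsion, what a non-cuspidal $x\in X_0(p)(\Q)$ specializing to a cusp modulo $\ell$ gives you is that the class $[x-\infty]$ dies in $\tilde{J}$; but $X_0(p)\to\tilde{J}$ is the Abel--Jacobi embedding composed with a non-injective projection, so this does not force $x=\infty$. The missing ingredients are (a) the proof that $x$ \emph{does} meet a cusp at a small prime (via the Hasse bound and N\'eron component groups, an elliptic curve with a rational point of prime order $\ge 11$ must have multiplicative reduction at $2$), and (b) Mazur's formal immersion property of $X_0(p)\to\tilde{J}$ along the cusp in residue characteristic $2$, which is what upgrades ``same image in $\tilde{J}(\F_2)$'' to ``$x$ equals the cusp.'' In addition, your claim to treat primes $p\ge 13$ ``uniformly'' through $X_0(p)$ breaks down at $p=13$, where $X_0(13)$ has genus $0$ and $J_0(13)=0$ (this case must be, and in your low-genus list implicitly is, done on $X_1(13)$ itself), and your plan to first classify $X_0(p)(\Q)$ and then check the exceptional primes $p=11,17,19,37,43,67,163$ separately invokes a theorem (Mazur's rational isogeny classification) that is strictly harder than, and historically later than, the torsion statement being proved; Mazur's torsion argument avoids it precisely by using the cuspidal specialization at $2$ available only for points lifted from $X_1(p)$.
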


Similarly, the classification for quadratic number fields was started by Kenku and Momose,
and completed by Kamienny:

\begin{theorem}[\cite{KeMo}, \cite{Kamienny}]
Let $K/\Q$ be a quadratic number field and $E/K$ an elliptic curve.
Then the torsion part $E(K)_\tors$ of its Mordell-Weil group is isomorphic to one of the following 26 groups:
\begin{align*}
& \Z/N\Z & \text{ for } & 1 \leq N \leq 18, N \neq 17, & \\
& \Z/2\Z \times \Z/2N\Z & \text{ for } & 1 \leq N \leq 6, & \\
& \Z/3\Z \times \Z/3N\Z & \text{ for } & 1 \leq N \leq 2, & \\
& \Z/4\Z \times \Z/4\Z. &
\end{align*}
\end{theorem}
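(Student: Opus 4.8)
The plan is to prove the theorem in two stages: first exhibit elliptic curves realizing each group on the list (the ``existence'' direction), and then show that no other group can occur (the ``completeness'' direction). For existence, I would work with the modular curves $X_1(N)$, and more generally $X_1(M, MN)$, which parametrize elliptic curves equipped with the relevant torsion data. For every group $G$ in the list, the associated modular curve has genus $0$, or is a low-genus curve carrying infinitely many quadratic points; in the genus-$0$ cases I would write down an explicit rational parametrization and pull back to obtain one-parameter families of elliptic curves over $\Q(t)$ or over quadratic fields $\Q(\sqrt{f(t)})$ realizing $G$. This direction is essentially a matter of explicit equations and produces all $26$ groups as genuine occurrences.

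The substance of the theorem is completeness. Here I would argue that a point of exact order $N$ on an elliptic curve $E$ over a quadratic field $K$ yields a $K$-rational point $x$ on the affine modular curve $Y_1(N)$, hence a $\Q$-rational point on the symmetric square $X_1(N)^{(2)}$, namely the unordered pair $\{x, x^\sigma\}$ where $\sigma$ generates $\mathrm{Gal}(K/\Q)$. Composing with the Abel--Jacobi map based at a cusp produces a $\Q$-rational point on the Jacobian $J_1(N)$. Following Kamienny, I would then project to a suitable quotient abelian variety of $J_1(N)$ with finite Mordell--Weil group over $\Q$ (the Eisenstein quotient of Mazur, or the winding quotient), so that the image of our point is a torsion point defined over $\Q$.

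With finiteness in hand, the crucial tool is a \emph{formal immersion} argument. I would fix a prime $p$ of good reduction, reduce everything modulo $p$, and study the composite morphism $X_1(N)^{(2)} \to J_1(N) \to J$, where $J$ is the chosen rank-zero quotient, in a formal neighbourhood of the cusp $\infty$ taken with multiplicity $2$. If this map is a formal immersion there --- a statement checkable by computing the action of the Hecke operators $T_1, T_2$ on the relevant cotangent space, i.e.\ by verifying linear independence of the first few $q$-expansion coefficients of weight-$2$ cusp forms modulo $p$ --- then a torsion point of $J(\Q)$ whose reduction meets the cusp must itself be the cusp, which is impossible for a point arising from an actual torsion point on an elliptic curve. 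This forces $N$ to be bounded. The remaining intermediate values of $N$ would then be eliminated by explicit study of $X_1(N)$ (its genus, its hyperelliptic or bielliptic structure, and the classification of its quadratic points in the spirit of Abramovich--Harris, supplemented by Faltings for the genus-$\ge 2$ curves), pinning down the surviving cyclic orders as $N \le 18$ with $N \neq 17$.

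The hardest part will be the formal immersion verification combined with the Eisenstein-ideal input, since it must be carried out uniformly enough to dispatch all large $N$ while remaining sharp enough to leave exactly the correct small values; the modular-form coefficient computations modulo well-chosen primes are delicate and are precisely where the quadratic case diverges from Mazur's rational case. Finally, to recover the full group structure rather than only the cyclic orders, I would repeat the analysis with the fine modular curves $X_1(M, MN)$ parametrizing a level structure $[M, MN]$: the Weil pairing forces $\zeta_M \in K$, so for $M \ge 3$ one has $K = \Q(\zeta_M)$ and the possibilities collapse drastically, yielding the constraints $\Z/2\Z \times \Z/2N\Z$ for $N \le 6$, $\Z/3\Z \times \Z/3N\Z$ for $N \le 2$, and $\Z/4\Z \times \Z/4\Z$, and no others.
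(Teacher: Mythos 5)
This statement is quoted in the paper as background and is not proved there; the paper simply cites Kenku--Momose and Kamienny. Your sketch faithfully reproduces the strategy of exactly those cited works --- quadratic points on $Y_1(N)$ mapped into the symmetric square $X_1(N)^{(2)}$, Abel--Jacobi into $J_1(N)$, projection to a rank-zero (Eisenstein/winding) quotient, the Mazur--Kamienny formal immersion criterion via Hecke operators on the cotangent space at the cusp, elimination of the remaining small levels by explicit study of quadratic points, and the Weil-pairing constraint on $X_1(M,MN)$ for the full group structures --- so it is essentially the same approach as the proof the paper refers to.
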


If we fix a quadratic field $K$, then not all groups from this theorem occur as the torsion groups of elliptic curves over $K$.
For example if $K$ does not contain a primitive $m$-th root of unity, then $\Z/mZ \times \Z /m\Z$ cannot
appear as the torsion groups over $K$.

In \cite{Najman}, Najman completely classified all the possible torsion groups over each of $\Q(\sqrt{-1})$ and $\Q(\sqrt{-3})$,
and in \cite{KaNa} Kamienny and Najman generalized it to obtain methods to determine all the possible torsion groups over
a fixed quadratic number field $K$.

In this paper, we generalize the methods of \cite{KaNa} and obtain methods to determine,
for a finite abelian group $\Z/n \times \Z/nm$ where $(n,nm)$ satisfies that $X_1(n,nm)$ is a hyperelliptic curve,
whether it appears as the torsion groups over a fixed multi-quadratic number field,
i.e., a number field (including $\Q$) which is a composition of quadratic number fields.
For example, we show that, for a multi-quadratic number field $K$, if the rank of $J_1(13)(K)$ is zero,
then there are no elliptic curves $E$ over $K$ whose Mordell-Weil group contains $13$-torsion points.

For explicit computation of, for example the Mordell-Weil groups of the Jacobian varieties of hyperelliptic curves,
we use Magma \cite{Magma}.

\section{Preliminaries}

For a positive integer $N$ and a subgroup $H$ of $\GL_2(\Z/N \Z)$,
we denote the algebraic stack over $\Z[1/N]$ classifying generalized elliptic curves with the level $H$ structure by $\mathscr{M}_H(N)$,
and denote its coarse moduli space by $X_H(N)$.
This is actually a scheme, is smooth, and has geometrically connected fibers over $\Z[1/N][\zeta_N]^{\det H}$.
If for every object $x \in \mathscr{M}_H(N)(k)$ over any algebraically closed field $k$ of characteristic prime to $N$, the group of automorphisms of $x$ is trivial, then the stack $\mathscr{M}_H(N)[1/N]$ is a scheme.

For particular subgroups $H = 1, \Gamma_1(N), \Gamma_1(M, MN)$ and $\Gamma_0(N)$
(where
\begin{eqnarray*}
\Gamma_1(M,MN) = 
\left\{ \left( \begin{matrix}1 & b \\ 0 & d \end{matrix} \right) \in \GL_2 (\Z/MN) \bigg |
b \equiv 0, d \equiv 1 \mod M \right\})
\end{eqnarray*}
respectively, we use the notation
$\mathscr{M}(N), \mathscr{M}_1(N), \mathscr{M}_1(M, MN)$ and $\mathscr{M}_0(N)$ respectively
and similarly for other objects associated to them.

There are many algorithms computing explicit equations of modular curves.
We use models of $X_1(N)$ given in \cite{Baaziz},
and for other modular curves \\
$X_1(M,MN)$, we use models given in \cite{Sutherland},
which computed them using the method of \cite{DeSu}.

For computations of the Mordell-Weil groups of the Jacobian varieties of hyperelliptic curves over a number field,
it is convenient to use the following lemma (for example, see \cite[1.3]{LaLo}):

\begin{proposition} \label{exp2twists}
Let $k$ be a number field, $K/k$ a finite Abelian extension of exponent $2$, 
$C$ a hyperelliptic curve over $k$, and $J$ its Jacobian variety.
Then $\operatorname{rank}J(K) = \sum_d \operatorname{rank}J^{(d)}(k),$ and
for any odd integer $m$, we have $J(K)[m] = \oplus_d J^{(d)}(k)[m],$
where the sum is taken over all quadratic twist of $C$ over $k$ trivialized by $K$ and $J^{(d)}$ its Jacobian.
\end{proposition}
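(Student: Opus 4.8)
The plan is to exploit the action of $G = \operatorname{Gal}(K/k)$, which is an elementary abelian $2$-group $(\Z/2\Z)^{r}$ since $K/k$ is abelian of exponent $2$. Its characters $\chi \colon G \to \{\pm 1\}$ are in natural bijection with the quadratic twists of $C$ trivialized by $K$: the trivial character corresponds to the trivial twist $J^{(1)} = J$, and a nontrivial $\chi$ corresponds to the quadratic subextension $L_\chi = K^{\ker \chi} = k(\sqrt{d_\chi})$ and hence to the twist $J^{(d_\chi)}$. First I would record the standard comparison between a twist and the original Jacobian. Writing $C^{(d)} \colon d y^2 = f(x)$ for $C \colon y^2 = f(x)$, the map $(x,y) \mapsto (x, y/\sqrt{d})$ induces an isomorphism $\phi \colon J_L \xrightarrow{\sim} J^{(d)}_L$ defined over $L = k(\sqrt d)$, and since the hyperelliptic involution acts as $[-1]$ on $J$ one checks the cocycle relation $\phi^\tau = -\phi$, where $\tau$ generates $\operatorname{Gal}(L/k)$. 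Descending to $k$-rational points then gives the key identification
\[
J^{(d)}(k) \;\cong\; J(L)^- := \{ P \in J(L) : \tau P = -P \},
\]
via $\phi$; indeed $\phi(P)$ is $\tau$-fixed iff $-\phi(\tau P) = \phi(P)$ iff $\tau P = -P$.

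For the rank statement I would decompose the $\Q[G]$-module $J(K) \otimes \Q$. As $G$ is abelian of exponent $2$, the idempotents $e_\chi = \tfrac{1}{|G|} \sum_{g \in G} \chi(g)\, g$ are orthogonal, sum to $1$, and cut out the isotypic pieces $(J(K) \otimes \Q)^\chi = \{ x : g x = \chi(g) x \ \forall g \in G\}$. For the trivial character this piece is $(J(K) \otimes \Q)^G = J(k) \otimes \Q = J^{(1)}(k) \otimes \Q$. For a nontrivial $\chi$, the condition $gx = x$ for $g \in \ker \chi$ forces $x \in J(L_\chi) \otimes \Q$, and the remaining condition $\tau x = -x$ identifies the piece with $J(L_\chi)^- \otimes \Q \cong J^{(d_\chi)}(k) \otimes \Q$ by the comparison above. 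Summing $\Q$-dimensions over all $\chi$ yields $\operatorname{rank} J(K) = \sum_d \operatorname{rank} J^{(d)}(k)$.

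The torsion statement runs along identical lines, and this is exactly where the hypothesis that $m$ is odd is used. Multiplication by $2$ is invertible on $J(K)[m]$, so $|G| = 2^r$ is a unit in $\operatorname{End}(J(K)[m])$ and the same idempotents $e_\chi$ now act on the $(\Z/m)[G]$-module $J(K)[m]$, giving $J(K)[m] = \bigoplus_\chi e_\chi J(K)[m]$. Repeating the eigenspace analysis verbatim --- now with the identification $J^{(d)}(k)[m] \cong \{ P \in J(L)[m] : \tau P = -P\}$ obtained by restricting $\phi$ to $m$-torsion --- identifies the $\chi$-component with $J^{(d_\chi)}(k)[m]$, and the direct sum decomposition follows.

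I expect the only genuinely delicate point to be the verification of the twisting cocycle relation $\phi^\tau = -\phi$ together with the clean descent $J^{(d)}(k) \cong J(L)^-$; once this identification of each twist with the minus-eigenspace of the corresponding quadratic layer is in hand, both assertions are formal consequences of the idempotent decomposition of the group algebra of $G$, with the parity of $m$ entering solely to guarantee that the factor $1/|G|$ is meaningful on the $m$-torsion.
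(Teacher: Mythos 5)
Your proof is correct, and it is essentially the intended argument: the paper offers no proof of this proposition, citing \cite[1.3]{LaLo} instead, and the method there (for elliptic curves in elementary abelian $2$-extensions of $\Q$) is exactly your route --- decompose $J(K)\otimes\Q$, respectively $J(K)[m]$ with $2$ invertible, into $\chi$-eigenspaces under $\operatorname{Gal}(K/k)$ via the idempotents $e_\chi$, and identify each eigenspace with $J(L_\chi)^-\cong J^{(d_\chi)}(k)$ through the twisting isomorphism $\phi$ with $\phi^\tau=-\phi$. The only point worth flagging is that writing $C^{(d)}\colon dy^2=f(x)$ presupposes a $k$-rational hyperelliptic model (i.e., that $C/\iota\cong\PP^1$ over $k$); in general one defines the quadratic twist by pushing the Kummer cocycle through the hyperelliptic involution, after which your computation goes through verbatim, and all curves to which the paper applies the proposition do have such models.
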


\section{The torsion parts of the modular Jacobian varieties} \label{Jacobian}

In this section, $K$ is a multi-quadratic number field unless otherwise stated.
First we state a trivial but crucial lemma:

\begin{lemma}
For any prime $\mathfrak{p}$ of $K$ whose characteristic of its residue field is odd,
the inertia degree and the ramification index are $1$ or $2$.
\end{lemma}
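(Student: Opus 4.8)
The plan is to exploit the Galois-theoretic structure of $K$. Since $K$ is a compositum of quadratic fields, $K/\Q$ is a Galois extension whose Galois group $G = \operatorname{Gal}(K/\Q)$ is an elementary abelian $2$-group, i.e.\ has exponent $2$. First I would fix a prime $\mathfrak{p}$ of $K$ lying over an odd rational prime $p$ and recall the standard dictionary: the ramification index and inertia degree are $e = |I_\mathfrak{p}|$ and $f = [D_\mathfrak{p} : I_\mathfrak{p}]$, where the inertia group $I_\mathfrak{p}$ and the decomposition group $D_\mathfrak{p}$ are subgroups of $G$ and therefore themselves have exponent dividing $2$.

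The inertia degree is the easy half. The quotient $D_\mathfrak{p}/I_\mathfrak{p}$ is canonically the Galois group $\operatorname{Gal}(\kappa(\mathfrak{p})/\F_p)$ of the residue field extension, hence cyclic, generated by Frobenius. Being simultaneously a subquotient of $G$, it has exponent dividing $2$; and a cyclic group of exponent dividing $2$ has order $1$ or $2$, so $f \in \{1,2\}$.

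For the ramification index the argument is slightly more delicate, and this is the step I expect to be the genuine (if modest) obstacle: a subgroup of $(\Z/2\Z)^r$ can be arbitrarily large, so the exponent-$2$ property by itself does \emph{not} bound $|I_\mathfrak{p}|$. Here I would invoke that $p$ is odd. Since $|I_\mathfrak{p}|$ divides $|G|$, which is a power of $2$, the wild inertia subgroup (the Sylow $p$-subgroup of $I_\mathfrak{p}$) is trivial, so $\mathfrak{p}$ is tamely ramified; the tame inertia group is cyclic, whence $I_\mathfrak{p}$ is a \emph{cyclic} subgroup of $G$. As before, a cyclic group of exponent dividing $2$ has order $1$ or $2$, giving $e \in \{1,2\}$. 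Equivalently one may pass to the completion $K_\mathfrak{p}/\Q_p$, an abelian extension of exponent $2$, and read off that its maximal unramified subextension has cyclic residue Galois group of exponent dividing $2$ (bounding $f$) while tameness at the odd prime forces the inertia to be cyclic of exponent dividing $2$ (bounding $e$). In every formulation the lemma reduces to the single elementary fact that a cyclic group of exponent $2$ has order at most $2$.
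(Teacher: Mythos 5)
Your proof is correct and follows essentially the same route as the paper's (very terse) proof: bound $f$ by noting the decomposition-modulo-inertia quotient is cyclic of exponent dividing $2$, and bound $e$ by noting that at an odd residue characteristic the wild ramification group is trivial, so the inertia group equals its cyclic tame quotient and again has exponent dividing $2$. Your write-up merely makes explicit the steps the paper leaves implicit (in particular, why oddness of $p$ kills wild inertia inside a $2$-group).
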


\begin{proof}
The quotient of the decomposition group by the inertia group
and the quotient of the inertia group by the ramification group are cyclic.
\end{proof}

Using it and Proposition \ref{exp2twists} we can compute the Mordell-Weil group $J_1(N)$ for hyperelliptic modular curves easily.
In the proofs of theorems in this section we write for simplicity $X$ and $J$ the modular curve and its Jacobian treated in the theorems.

\begin{theorem} \label{Jacobian11}
$J_1(11)(K)_{\tors} =J_1(11)(\Q) \cong \Z/5$.
\end{theorem}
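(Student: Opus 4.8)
The plan is to identify $X = X_1(11)$ with an elliptic curve and then bound $J(K)_\tors$ one prime at a time by reducing modulo well-chosen primes, using the crucial lemma to keep the residue fields under control. First I would recall that $X_1(11)$ has genus $1$, so $J = J_1(11)$ is isomorphic to $X_1(11)$ itself: it is the elliptic curve of conductor $11$ with $J(\Q) \isom \Z/5$ and good reduction away from $11$. Since $J(\Q) \subseteq J(K)$ always, it suffices to prove the upper bound, namely that the $\ell$-primary part of $J(K)_\tors$ is trivial for every $\ell \neq 5$ and that the $5$-part has order dividing $5$.

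The only input beyond the crucial lemma is the standard fact that, for a prime $\mathfrak p$ of $K$ at which $J$ has good reduction, with residue characteristic $p$ and residue field $\kappa(\mathfrak p)$, reduction maps the prime-to-$p$ torsion subgroup of $J(K)$ injectively into $J(\kappa(\mathfrak p))$. By the crucial lemma, for odd $p$ the field $\kappa(\mathfrak p)$ is either $\F_p$ or $\F_{p^2}$. I would then compute, from the Frobenius traces $a_3 = -1$ and $a_5 = 1$ of the newform of level $11$, the orders
\[
\#J(\F_3) = 5, \quad \#J(\F_9) = 15, \quad \#J(\F_5) = 5, \quad \#J(\F_{25}) = 35 .
\]

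Two reductions now finish the argument. Taking $\mathfrak p \mid 3$ (good reduction, odd residue characteristic), the prime-to-$3$ part of $J(K)_\tors$ injects into a group of order $5$ or $15$; this immediately kills the $2$-part and every $\ell$-part with $\ell \neq 3,5$, and forces the $5$-part to have order dividing $5$. Taking $\mathfrak p \mid 5$, the prime-to-$5$ part injects into a group of order $5$ or $35$, both of which are prime to $3$, so the $3$-part vanishes as well. Combining the two, $J(K)_\tors$ has trivial $\ell$-part for all $\ell \neq 5$ and $5$-part of order dividing $5$; since $\Z/5 \isom J(\Q) \subseteq J(K)$ already, the $5$-part is exactly $\Z/5$, whence $J(K)_\tors \isom \Z/5 = J(\Q)_\tors$.

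The only point requiring care is uniformity over all multi-quadratic $K$ simultaneously: a priori a prime above $3$ or $5$ could have an arbitrarily large residue field, which would make the finite-field bounds vacuous. This is precisely what the crucial lemma excludes by pinning $\kappa(\mathfrak p)$ down to $\F_p$ or $\F_{p^2}$; once that is granted the proof is a finite check and presents no real obstacle. I note that the odd part alone could instead be read off from Proposition \ref{exp2twists}, via $J(K)[m] = \oplus_d J^{(d)}(\Q)[m]$ for odd $m$ together with irreducibility of the mod-$\ell$ representation of $J$ for $\ell \neq 5$ and a twist-by-twist analysis at $\ell = 5$, but the reduction argument disposes of the $2$-part at the same time and is cleaner here.
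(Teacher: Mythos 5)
Your proof is correct and follows essentially the same route as the paper: identify $X_1(11)$ with the elliptic curve $y^2 - y = x^3 - x^2$ of conductor $11$, note $J(\Q) \isom \Z/5$, and use reduction at primes above $3$ and $5$ (with residue fields pinned to $\F_p$ or $\F_{p^2}$ by the crucial lemma) so that the orders $15$ and $35$ of $J(\F_9)$ and $J(\F_{25})$ force $J(K)_\tors \isom \Z/5$. The only cosmetic difference is that you obtain these orders from the Frobenius traces $a_3, a_5$ of the level-$11$ newform, whereas the paper computes the groups $J(\F_{3^2})$ and $J(\F_{5^2})$ directly in Magma.
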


\begin{proof}
The modular curve $X$ has a model $y^2 - y = x^3 - x^2.$
Thus we can compute $J(\Q) = \Z/5$.
Next we can compute $J(\F_{3^2}) = \Z/15$ and $J(\F_{5^2}) = \Z/35$ (for example use Magma).
Thus the result.
\end{proof}

\begin{theorem} \label{Jacobian13}
$J_1(13)(K)_{\operatorname{tor}} =J_1(13)(\Q) \cong \Z/19$.
\end{theorem}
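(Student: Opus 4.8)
The plan is to follow the same template as the proof of Theorem \ref{Jacobian11}, now for the genus-$2$ curve $X = X_1(13)$ with its two-dimensional Jacobian $J$. First I would fix the explicit hyperelliptic model of $X$ given in \cite{Baaziz} (or \cite{Sutherland}) and use Magma to verify that $J(\Q)_{\tors} \cong \Z/19$. This already yields the lower bound, since $\Z/19 \cong J(\Q)_{\tors} \subseteq J(K)_{\tors}$. The real content of the theorem is therefore the matching upper bound $J(K)_{\tors} \subseteq \Z/19$, and more specifically that no new torsion is gained upon passing from $\Q$ to the multi-quadratic field $K$.

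For the upper bound I would use that $X$ has good reduction away from $13$, so every odd prime $p \neq 13$ is a prime of good reduction. For such a prime $\mathfrak{p}$ of $K$ above $p$ and any integer $m$ coprime to $p$, reduction is injective on $m$-torsion, giving $J(K)[m] \hookrightarrow J(\kappa(\mathfrak{p}))$; by the Lemma the residue field $\kappa(\mathfrak{p})$ is $\F_p$ or $\F_{p^2}$, so in all cases $J(K)[m] \hookrightarrow J(\F_{p^2})$. Consequently, for every prime $\ell$ and every good odd $p \neq \ell$, the $\ell$-primary part of $J(K)_{\tors}$ embeds into $J(\F_{p^2})$, whence $v_\ell(\lvert J(K)_{\tors}\rvert) \le v_\ell(\lvert J(\F_{p^2})\rvert)$.

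I would then compute $J(\F_{p^2})$ for a small set of odd primes, with $p = 3$ and $p = 5$ the natural first choices (paralleling the $\F_{3^2}, \F_{5^2}$ computations used for $X_1(11)$), adding $p = 7$ if necessary, recording both the orders and the group structures via Magma. The aim is to choose these primes so that for every prime $\ell \neq 19$ at least one selected $p \neq \ell$ satisfies $\ell \nmid \lvert J(\F_{p^2})\rvert$, which forces the $\ell$-part of $J(K)_{\tors}$ to vanish, while some $p$ pins the $19$-part to exactly $\Z/19$ via $J(\F_{p^2})[19^\infty] \cong \Z/19$. If the orders $\lvert J(\F_{p^2})\rvert$ across the chosen primes share, outside the residue characteristics themselves, the single common odd factor $19$, then $J(K)_{\tors} \subseteq \Z/19$, and combining with the lower bound gives equality and identifies $J_1(13)(K)_{\tors}$ with $J_1(13)(\Q)$.

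The main obstacle, relative to the elliptic-curve-style count for $X_1(11)$, is that $J$ is now two-dimensional, so $\lvert J(\F_{p^2})\rvert \approx p^4$ is substantially larger and the groups need not be cyclic. One must check that each residue characteristic $p$ appearing in the list is itself eliminated by another prime (since the injection fails precisely at $\ell = p$), and that the $2$-part is killed, for instance by exhibiting an odd $p$ with $2 \nmid \lvert J(\F_{p^2})\rvert$ or by bounding it across several primes. Confirming that just two or three small primes already cut the common torsion down to exactly $\Z/19$ is the crux of the argument, and is exactly the finite computation that Magma settles.
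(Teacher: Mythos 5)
Your proposal is correct and follows essentially the same route as the paper: the paper also fixes the hyperelliptic model $y^2 = x^6 - 2x^5 + x^4 - 2x^3 + 6x^2 - 4x + 1$, computes $J(\Q) \cong \Z/19$, $J(\F_9) \cong \Z/3 \times \Z/19$, and $J(\F_{25}) \cong \Z/19 \times \Z/19$, and concludes by the same reduction-injectivity argument (prime-to-$p$ torsion injects into $J(\F_{p^2})$ via the lemma on residue degrees). Your two chosen primes $3$ and $5$ already suffice exactly as in the paper, since the $3$-part is killed by $\F_{25}$ and everything else outside $19$ is killed by $\F_9$, so no third prime is needed.
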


\begin{proof}
The modular curve $X$ has a model $y^2 = x^6 - 2 x^5 + x^4 - 2 x^3 + 6 x^2 - 4 x + 1 $.
Using it we can compute $J(\Q) = \Z/19, J(\F_9) = \Z/3 \times \Z/19,$
and $J(\F_{25}) = \Z/19 \times \Z/19$.
Hence the result.
\end{proof}

Henceforth for convenience we use the following notation:
For a number field $K$ and a positive integer $N$,
define $K_{(N)} = K \cap \Q(\zeta_N)$.

\begin{theorem} \label{Jacobian14}
\begin{align*}
J_1(14)(K)_{\tors} & = J_1(14)(K \cap \Q(\zeta_{14})) \\
 & \cong \left\{
\begin{array}{ll}
\Z/6 & (K_{(14)} = \Q) \\
\Z/2 \times \Z/6 & (K_{(14)} = \Q(\sqrt{-7})).
\end{array}
\right.
\end{align*}
\end{theorem}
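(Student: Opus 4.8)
The curve $X = X_1(14)$ has genus one, so $J$ is an elliptic curve over $\Q$; starting from an explicit model as in Theorems \ref{Jacobian11} and \ref{Jacobian13}, I would first compute $J(\Q)_\tors \cong \Z/6$. Since $14 = 2\cdot 7$ we have $\Q(\zeta_{14}) = \Q(\zeta_7)$, which is cyclic of degree $6$ over $\Q$ with unique quadratic subfield $\Q(\sqrt{-7})$; hence any multi-quadratic subfield $K_{(14)} = K \cap \Q(\zeta_{14})$ is either $\Q$ or $\Q(\sqrt{-7})$, and $K_{(14)} = \Q(\sqrt{-7})$ exactly when $\sqrt{-7} \in K$. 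This explains the dichotomy and reduces the theorem to showing that the torsion acquired over $K$ depends only on whether $\sqrt{-7} \in K$.

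For the prime-to-$2$ part I would argue that it is bounded uniformly in $K$. By the lemma preceding Theorem \ref{Jacobian11}, every prime $\mathfrak{p}$ of $K$ of odd good residue characteristic $p$ has residue field $\F_{p}$ or $\F_{p^2}$, so the prime-to-$p$ torsion of $J(K)$ injects into $J(\F_{p^2})$. Computing $J(\F_{p^2})$ for a few good odd primes (the bad primes being $2$ and $7$), for instance $p = 3, 5$, and intersecting the resulting constraints forces the odd torsion to be exactly $\Z/3$ and excludes any $\ell$-torsion for primes $\ell \ge 5$, independently of $K$. Alternatively, for the odd part one may invoke Proposition \ref{exp2twists} directly, checking that no quadratic twist $J^{(d)}$ contributes odd torsion beyond the $\Z/3$ of the trivial twist.

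It remains to pin down the $2$-power part, which is the only genuinely $K$-dependent piece. The same reduction injection, applied to a prime $p$ for which $J(\F_{p^2})$ has no element of order $4$, shows that $J(K)[2^\infty] \subseteq J[2] \cong (\Z/2)^2$ for every $K$; in particular no $\Z/4$ ever appears. Finally, since $\Z/2 \subset J(\Q)_\tors$, the $2$-division polynomial of $J$ has a rational root and factors as a linear times a quadratic factor; I would compute the splitting field of that quadratic and verify that it equals $\Q(\sqrt{-7})$. Consequently the full $2$-torsion $J[2]$ lies in $J(K)$ precisely when $\sqrt{-7} \in K$, i.e. when $K_{(14)} = \Q(\sqrt{-7})$. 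Combining the three parts yields $J(K)_\tors \cong \Z/6$ when $K_{(14)} = \Q$ and $\Z/2 \times \Z/6$ when $K_{(14)} = \Q(\sqrt{-7})$, and in both cases this torsion is already realized over $K_{(14)}$.

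The main obstacle is the $2$-power torsion: Proposition \ref{exp2twists} only governs odd $m$, and the $2$-torsion genuinely grows with the field, so the argument cannot be reduced to twists in the clean way it can for odd primes. The crux is therefore the explicit identification of the $2$-division field with $\Q(\sqrt{-7})$, together with the reduction argument excluding order-$4$ points; the remaining contributions are controlled uniformly in $K$ and present no difficulty.
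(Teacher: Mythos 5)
Your proposal follows the same broad skeleton as the paper: compute $J(\Q)\cong\Z/6$, bound the torsion growth by reduction at odd primes (using the lemma that residue fields of $K$ are at most quadratic), and detect the full $2$-torsion via the quadratic factor of the $2$-division polynomial, whose splitting field is $\Q(\sqrt{-7})$ (with the paper's model $y^2=(x+33)(x^2-33x+414)$ the discriminant of $x^2-33x+414$ is $-567=-81\cdot 7$). Your observation that $K_{(14)}$ must be $\Q$ or $\Q(\sqrt{-7})$ is correct and worth making explicit. However, there are two genuine gaps.

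First, your treatment of the odd part fails for the primes you chose. Since $a_5=0$ for this curve, Frobenius of $\F_{25}$ acts on $J[3]$ as $\phi_5^2=-5\equiv 1 \pmod 3$, so $J[3]\subset J(\F_{25})$ and in fact $J(\F_{25})\cong \Z/6\times\Z/6$. Thus reductions at $p=3$ and $p=5$ only bound the odd torsion by $(\Z/3)^2$; they cannot rule out full $3$-torsion over, say, a $K$ containing $\sqrt{-3}$, which the Weil pairing alone does not forbid. Moreover this failure is structural, not bad luck: because $J$ has a rational $3$-torsion point, for every good prime $p\equiv 2\pmod 3$ the Frobenius acts on $J[3]$ with the distinct eigenvalues $1$ and $p\equiv 2$, hence is diagonalizable and its square is trivial, so $J[3]\subset J(\F_{p^2})$ for all such $p$. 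One needs a prime $p\equiv 1\pmod 3$ with non-semisimple Frobenius on $J[3]$; the paper uses $p=13$, where $J(\F_{13^2})\cong\Z/2\times\Z/90$ has cyclic $3$-part $\Z/9$. Even then reduction alone leaves $\Z/9$ open, and the paper eliminates it with Proposition \ref{exp2twists}: the decomposition $J(K)[3^\infty]=\oplus_d J^{(d)}(\Q)[3^\infty]$ embeds in $\Z/9$ and contains $J(\Q)[3^\infty]=\Z/3$ as a direct summand, which forces $J(K)[3^\infty]=\Z/3$. (Your ``alternative'' of invoking Proposition \ref{exp2twists} and checking that no twist contributes odd torsion is not a finite verification as stated, since there are infinitely many quadratic twists to check; it is precisely the combination of the twist decomposition with a reduction bound that makes it effective. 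Alternatively, combining your $\F_{25}$ computation, which gives exponent $3$, with the $\F_{13^2}$ computation, which gives cyclicity, would also work--but you must know to reach for a prime $\equiv 1\pmod 3$.)

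Second, you never compute any ranks. The theorem asserts $J_1(14)(K)_{\tors}=J_1(14)(K\cap\Q(\zeta_{14}))$, an equality with the \emph{full} Mordell-Weil group over $K_{(14)}$; this requires $\rank J(\Q)=\rank J(\Q(\sqrt{-7}))=0$, which the paper establishes by $2$-descent. Your argument, even after repairing the odd part, yields only $J(K)_{\tors}=J(K_{(14)})_{\tors}$.
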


\begin{proof}
Using a model $y^2 = f(x)$ for $f(x) = (x+33)(x^2 - 33x + 414)$ of $X$, we can compute
$J(\Q) = \Z/6, J(\F_9) = \Z/2 \times \Z/6,$ and
$J(\F_{13^2}) = \Z/2 \times \Z/90$.

By Proposition \ref{exp2twists}, $J(K)[3^\infty] = \oplus_d J^{(d)}(\Q)[3^\infty]$,
where the direct sum is taken over all quadratic twists $J^{(d)}$.
The left hand side is contained in $J(\F_{13^2})[3^\infty] = \Z/9$, and
the right hand side contains $J(\Q)[3^\infty] = \Z/3$ as its direct summand.
Therefore $J(K)[3^\infty]$ must be $\Z/3$.
On the other hand $J(K)$ has full $2$ torsion points if and only if $f(x)$ splits in $K$, i.e., $\sqrt{-7} \in K$.

Finally we can compute that the ranks of the Mordell-Weil group of $J$ over $\Q$ and over $\Q(\sqrt{-7})$ are zero
using $2$-descent.
Therefore the result.
\end{proof}

\begin{theorem} \label{Jacobian15}
\begin{align*}
J_1(15)(K)_{\tors} & = J_1(15)(K \cap \Q(\zeta_{15})) \\
 & \cong \left\{
\begin{array}{ll}
\Z/4 & (K_{(15)} = \Q) \\
\Z/8 & (K_{(15)} = \Q(\sqrt{-3}) \text{ or } \Q(\sqrt{5}) ) \\
\Z/2 \times \Z/4 & (K_{(15)} = \Q(\sqrt{-15})) \\
\Z/2 \times \Z/8 & (K_{(15)} = \Q(\sqrt{-3}, \sqrt{5})).
\end{array}
\right.
\end{align*}
\end{theorem}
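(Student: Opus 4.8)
The plan is to follow the pattern of the proof of Theorem \ref{Jacobian14}, the essential difference being that here \emph{every} group in the list is a $2$-group. Consequently Proposition \ref{exp2twists}, which only controls the odd part, can at best be used to dispose of possible odd torsion, and the whole of the $2$-primary torsion must be analysed by hand. First I would fix a Weierstrass model $y^2 = f(x)$ of $X = X_1(15)$ (which has genus $1$, so that $J \isom X$ is an elliptic curve $E$ with the rational cusp as origin) and record $E(\Q) \isom \Z/4$; write $P$ for a generator and $T_0 = 2P$ for the rational $2$-torsion point. By the Lemma at the beginning of this section, for every prime $\mathfrak{p}$ of $K$ of odd good residue characteristic $p$ the residue field is $\F_p$ or $\F_{p^2}$, so reduction embeds the prime-to-$p$ part of $E(K)_\tors$ into $E(\F_{p^2})$, uniformly in $K$ and independently of $[K:\Q]$. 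Computing $E(\F_{p^2})$ in Magma for two suitable good primes $p \neq 3, 5$ and taking the largest group embedding into both then shows that $E(K)_\tors$ embeds into $\Z/2 \times \Z/8$; in particular there is no odd torsion, no point of order $16$, and no copy of $(\Z/4)^2$. This reduces the theorem to locating the $2$-power torsion.

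Next I would treat the $2$-torsion and the $8$-torsion separately. Since $E(\Q) \isom \Z/4$, the cubic $f$ has exactly one rational root (corresponding to $T_0$) together with an irreducible quadratic factor; computing the discriminant of that factor shows it splits precisely over $\Q(\sqrt{-15})$, so $E[2] \subset E(K) \iff \sqrt{-15} \in K$. The points of order $8$ are the four preimages $Q,\ Q + T_0,\ Q + T_1,\ Q + T_2$ of $P$ under $[2]$, where $T_1, T_2$ are the two non-rational $2$-torsion points. Halving $P$ explicitly via the duplication formula identifies the field of one such $Q$ as $\Q(\sqrt{-3})$.

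The key point is then a short Galois computation. Let $\tau$ be the automorphism of $\Q(\sqrt{-3}, \sqrt{5})$ fixing $\sqrt{5}$ and negating $\sqrt{-3}$, so that $\tau$ also negates $\sqrt{-15} = \sqrt{-3}\,\sqrt{5}$ and hence interchanges $T_1$ and $T_2$. Since $\tau$ must act on $\langle Q \rangle \isom \Z/8$ by the unique nontrivial automorphism fixing $P = 2Q$, one has $\tau(Q) = 5Q = Q + T_0$, and therefore, using $T_0 + T_2 = T_1$, the relation $\tau(Q + T_1) = 5Q + T_2 = Q + T_0 + T_2 = Q + T_1$. Thus $Q + T_1$ is a point of order $8$ fixed by $\tau$, i.e. defined over $\Q(\sqrt{5})$. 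Hence two halves of $P$ are defined over $\Q(\sqrt{-3})$ and the other two over $\Q(\sqrt{5})$, and all of the $2$-power torsion is already rational over $\Q(\sqrt{-3}, \sqrt{5})$, the maximal multi-quadratic subfield of $\Q(\zeta_{15})$.

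Finally I would assemble the cases. Because every torsion point lies in $\Q(\sqrt{-3}, \sqrt{5}) \subset \Q(\zeta_{15})$ and $K$ is multi-quadratic, $E(K)_\tors = E(K \cap \Q(\sqrt{-3}, \sqrt{5}))_\tors = E(K_{(15)})_\tors$, which proves the first displayed equality. Whether $\Z/4$ is enlarged is then governed by which of $\sqrt{-3}, \sqrt{5}, \sqrt{-15}$ lie in $K$, equivalently by $K_{(15)}$: if $K_{(15)} = \Q$ neither a half of $P$ nor the extra $2$-torsion is present and $E(K)_\tors \isom \Z/4$; if $K_{(15)} = \Q(\sqrt{-3})$ or $\Q(\sqrt{5})$ a point of order $8$ appears but not the full $2$-torsion, giving $\Z/8$; if $K_{(15)} = \Q(\sqrt{-15})$ the full $2$-torsion appears but no half of $P$, giving $\Z/2 \times \Z/4$; and if $K_{(15)} = \Q(\sqrt{-3}, \sqrt{5})$ both occur, giving $\Z/2 \times \Z/8$. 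The main obstacle is the third step: unlike the odd-torsion argument of Theorem \ref{Jacobian14}, the splitting of the eight-torsion between the two distinct quadratic fields $\Q(\sqrt{-3})$ and $\Q(\sqrt{5})$ is exactly the phenomenon Proposition \ref{exp2twists} cannot see, so it must be pinned down by the explicit halving together with the Galois computation above.
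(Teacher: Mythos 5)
Your skeleton (reduction at two good primes to force $E(K)_\tors \subseteq \Z/2\times\Z/8$, factorization of $f$ for the $2$-torsion, explicit location of the order-$8$ points, then a case split by $K_{(15)}$) is the same as the paper's, and your Galois trick deducing that two halves of $P$ live over $\Q(\sqrt{5})$ once one lives over $\Q(\sqrt{-3})$ is correct. But the order-$8$ analysis has a genuine gap. Your assertion that ``the points of order $8$ are the four preimages of $P$ under $[2]$'' is false: a point $R$ of order $8$ in $E(K)$ only needs to double to \emph{some} order-$4$ point of $E(K)$, and once $E(K)$ contains the full $2$-torsion (i.e.\ $\sqrt{-15}\in K$) the available order-$4$ points are not only $\pm P$ but also $\pm P+T_1$ and $\pm P+T_2$. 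Your computation pins down the fields of definition of the four halves of $P$ only; it says nothing about the halves of $P+T_1$ and $P+T_2$, which are sixteen further points of order $8$ with different $x$-coordinates. If one of those were defined over a multi-quadratic field containing $\sqrt{-15}$ but not both $\sqrt{-3}$ and $\sqrt{5}$ (for instance over $\Q(\sqrt{-15})$ itself, or $\Q(\sqrt{-1},\sqrt{-15})$), then $E(K)_\tors$ would be $\Z/2\times\Z/8$ there, contradicting the theorem's third case --- and nothing in your argument excludes this. This is precisely what the paper's division-polynomial computation buys: among \emph{all} irreducible factors of the degree-$24$ polynomial $\varphi_8/\varphi_4$, only $x^2-66x-531$ and $x^2+6x+981$ have splitting field of exponent $2$ over $\Q$, so no point of order $8$ other than the halves of $\pm P$ can even have its $x$-coordinate in a multi-quadratic field. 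You need this computation (or an equivalent one for the halves of $P+T_i$) to make your case assembly legitimate.

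There is a second, smaller omission: the first displayed equality of the theorem is $J_1(15)(K)_\tors = J_1(15)(K\cap\Q(\zeta_{15}))$, with the \emph{full} Mordell--Weil group on the right, not its torsion subgroup. Your assembly only yields $E(K)_\tors = E(K_{(15)})_\tors$. To conclude the equality as stated one must also know that the rank of $J_1(15)$ over $\Q(\sqrt{-3},\sqrt{5})$ (hence over all of its subfields) is zero; the paper establishes this by a $2$-descent, and your proposal never addresses the rank at all.
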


\begin{proof}
Using a model $y^2 = f(x)$ for $f(x) = (x+21)(x^2 - 21x + 414)$ of $X$ we can compute
$J(\Q) = \Z/4, J(\F_{7^2}) = \Z/8 \times \Z/8,$ and
$J(\F_{13^2}) = \Z/2 \times \Z/96$.
Thus $\Z/4 \subseteq J(K)_{\tors} \subseteq  \Z/2 \times \Z/8.$

We compute when $J(K)$ contains $\Z/2 \times \Z/2$ or $\Z/8$.
First, $J(K)$ contains $\Z/2 \times \Z/2$ if and only if $f(x)$ splits in $K$, i.e., $\sqrt{-15} \in K$.
Next, for the $n$-th division polynomials $\varphi_n$,
$J(K)$ has a $8$-torsion point if and only if the polynomial $\varphi_8 / \varphi_4$ has a root $a$ in $K$
and $K$ contains a root of $y^2 - f(a)$.
According to Magma, the irreducible factors of $\varphi_8 / \varphi_4$ with the Galois group of exponent $2$
are $x^2 - 66 x - 531$ and $x^2 + 6x + 981.$
By an easy computation, we see that, for a root $a$ of one of these two polynomials,
the number field $\Q(\sqrt{a})$ contains roots of $y^2 - f(a)$.
Hence $J(K)$ has a $8$-torsion point if and only if at least one of them has roots in $K$,
i.e., $\sqrt{-3}$ or $\sqrt{5} \in K$.

Finally by $2$ descent we can compute that the rank of $J$ over $\Q(\sqrt{-3}, \sqrt{5})$ is zero.
Summarizing them we obtain the result.
\end{proof}

\begin{theorem} \label{Jacobian16}
\begin{align*}
J_1(16)(K)_{\tors} & = J_1(16)(K \cap \Q(\zeta_{16})) \\
 & \cong \left\{
\begin{array}{ll}
\Z/2 \times \Z/10 & (K_{(16)} = \Q \text{ or } \Q(\sqrt{-2})) \\
\Z/2 \times \Z/2 \times \Z/10 & (K_{(16)} = \Q(\sqrt{-1}) \text{ or } \Q(\sqrt{2})) \\
\Z/2 \times \Z/2 \times \Z/2 \times \Z/10 & (K_{(16)} = \Q(\sqrt{-1}, \sqrt{2})).
\end{array}
\right.
\end{align*}
\end{theorem}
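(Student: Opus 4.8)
The plan is to follow the template of Theorems~\ref{Jacobian14} and~\ref{Jacobian15}, treating the odd and the $2$-primary parts of the torsion separately. First I would fix the explicit genus-$2$ model $y^2 = f(x)$ of $X = X_1(16)$ coming from \cite{Sutherland}, so that $J[2] \cong (\Z/2)^4$ and the $\F_2$-dimension of $J(L)[2]$ is at most $4$ for every field $L$. With Magma I would compute $J(\Q)_\tors \cong \Z/2 \times \Z/10$ together with $J(\F_{p^2})$ for a few small odd primes of good reduction (those not dividing $16$), say $p = 3, 5, 7$. By the lemma every prime of $K$ over an odd $p$ has residue field inside $\F_{p^2}$, so for $\ell \neq p$ reduction gives an injection $J(K)[\ell^\infty] \hookrightarrow J(\F_{p^2})[\ell^\infty]$; these finite-field groups provide all the upper bounds.

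For the odd part I would argue as in Theorem~\ref{Jacobian14}. Since $\Z/5 = J(\Q)[5^\infty]$ is a direct summand of $J(K)[5^\infty] = \bigoplus_d J^{(d)}(\Q)[5^\infty]$ by Proposition~\ref{exp2twists}, while $J(K)[5^\infty] \hookrightarrow J(\F_{p^2})[5^\infty]$ for any odd $p \neq 5$, choosing $p$ so that this last group is cyclic forces $J(K)[5^\infty] = \Z/5$ for every $K$. Any other odd prime $\ell$ is eliminated directly: if $\ell \nmid \# J(\F_{p^2})$ for some odd $p \neq \ell$ of good reduction, then $J(K)[\ell^\infty] \hookrightarrow J(\F_{p^2})[\ell^\infty] = 0$. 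Hence the odd part of $J(K)_\tors$ equals $\Z/5$ independently of $K$.

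To control the $2$-primary part I would first exclude points of order $4$ by exhibiting a single odd prime $p$ with $J(\F_{p^2})[2^\infty] \cong (\Z/2)^4$; then $J(K)[2^\infty] \hookrightarrow (\Z/2)^4$ for that $p$ shows $J(K)[2^\infty] = J(K)[2]$ is elementary abelian of rank at most $4$ for every multi-quadratic $K$. Because $K$ is multi-quadratic, $K_{(16)}$ is one of $\Q$, the three quadratic subfields $\Q(\sqrt{-1}), \Q(\sqrt2), \Q(\sqrt{-2})$, or their compositum $\Q(\zeta_8)$, the only multi-quadratic subfields of $\Q(\zeta_{16})$. The exact rank of $J(K)[2]$ is then read off from the factorization of $f$ over $K$ (together with the behaviour at infinity), which depends only on which of these quadratic fields lie in $K$, hence only on $K_{(16)}$. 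Factoring $f$ over $\Q$ and its quadratic extensions, I expect to find that adjoining $\sqrt{-1}$ or $\sqrt2$ each makes one further class of $J[2]$ rational while $\sqrt{-2}$ alone makes none, giving rank $2$ for $K_{(16)} \in \{\Q, \Q(\sqrt{-2})\}$, rank $3$ for $K_{(16)} \in \{\Q(\sqrt{-1}), \Q(\sqrt2)\}$, and rank $4$ for $K_{(16)} = \Q(\sqrt{-1}, \sqrt2)$. Since the torsion grows only through these square roots, $J(K)_\tors = J(K_{(16)})_\tors$; combining the $2$-part with the $\Z/5$ above yields the four listed groups. A concluding $2$-descent over the largest field $\Q(\zeta_8) = \Q(\sqrt{-1},\sqrt2)$ showing $\rank J(\Q(\zeta_8)) = 0$ finishes the Mordell--Weil computation.

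The main obstacle is the uniform exclusion of $4$-torsion over all $K$ at once, which hinges on actually locating an odd prime where $J(\F_{p^2})[2^\infty]$ is elementary abelian of the full rank $4$ (equivalently $f$ splits completely over $\F_{p^2}$ and $v_2(\# J(\F_{p^2})) = 4$). Should no such prime appear among the small ones, I would instead rule out rational $4$-torsion by hand, via the division-polynomial computation used for $8$-torsion in Theorem~\ref{Jacobian15}, checking that the relevant factors have no roots generating the quadratic subfields of a multi-quadratic field. The other delicate point is the assertion that $\sqrt{-2}$ alone does not raise the rank even though $\Q(\sqrt{-2}) \subset \Q(\zeta_{16})$: this has to be extracted carefully from the explicit factorization of $f$, making sure the parity constraints coming from the odd-degree factors (or the two points at infinity) leave the $\Q(\sqrt{-2})$-rank equal to the $\Q$-rank.
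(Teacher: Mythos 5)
Your proposal is correct and follows essentially the same route as the paper: compute $J(\Q)\cong[2,10]$ and reduce at primes above $3$ and $5$ (the paper finds $J(\F_9)=[2,2,2,10]$, which is exactly the prime you hoped for, giving elementary abelian $2$-torsion and killing everything odd except $\Z/5$), then read off $J(K)[2]$ from the Weierstrass points of the model $y^2=x(x^2+1)(x^2+2x-1)$ (where $\sqrt{-1}$ and $\sqrt{2}$ split the quadratic factors but $\sqrt{-2}$ splits neither), and finish with $2$-descent rank computations. Your fallback worries do not materialize, and your twist-decomposition argument for the $5$-part is a harmless elaboration of the paper's direct reduction bound.
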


\begin{proof}
Using a model $y^2 = x(x^2 + 1)(x^2 + 2x - 1)$ of $X$, we can compute
$J(\Q) = [2,10], J(\F_9) = [2,2,2,10],$ and $J(\F_{5^2}) = [2,2,4,40].$
Thus $J(K) \subseteq [2,2,2,10].$

Now considering the Weierstrass points of the hyperelliptic model of $X$, we can compute $J(K)[2]$.
Finally computing the ranks using $2$ descent, we have the results.
\end{proof}

\begin{theorem} \label{Jacobian18}
\begin{align*}
J_1(18)(K)_{\tors} & = J_1(18)(K \cap \Q(\zeta_{18})) \\
 & \cong \left\{
\begin{array}{ll}
\Z/21 & (K_{(18)} = \Q) \\
\Z/3 \times \Z/21 & (K_{(18)} = \Q(\sqrt{-3})).
\end{array}
\right.
\end{align*}
\end{theorem}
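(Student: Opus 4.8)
The plan is to follow the pattern of the proofs of Theorems \ref{Jacobian11}--\ref{Jacobian16}. First I would take a hyperelliptic model $y^2 = f(x)$ of the genus-$2$ curve $X = X_1(18)$ from \cite{Sutherland} and have Magma \cite{Magma} compute $J(\Q) \cong \Z/21$ together with $J(\F_{p^2})$ for two or three primes $p$ of good reduction, which here means $p \ge 5$ (the bad primes $2,3$ divide $18$), say $p \in \{5,7,13\}$. By the Lemma above, every prime $\mathfrak p$ of $K$ lying over such a $p$ has residue field $\F_p$ or $\F_{p^2}$, so for each prime $\ell \ne p$ the reduction map embeds $J(K)[\ell^\infty]$ into $J(\F_{p^2})[\ell^\infty]$, and likewise the $2$-part embeds at any $\mathfrak p$ of odd residue characteristic. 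Intersecting these bounds should show that $J(K)_\tors$ has trivial $2$-part, no $\ell$-torsion for $\ell \ne 3,7$, and $7$-part exactly $\Z/7$, while its $3$-part has exponent $3$ and $3$-rank at most $2$; that is, $\Z/21 \subseteq J(K)_\tors \subseteq \Z/3 \times \Z/21$.

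It then remains to decide when the $3$-part jumps from $\Z/3$ to $\Z/3 \times \Z/3$. Since $K/\Q$ is abelian of exponent $2$, Proposition \ref{exp2twists} gives $J(K)[3] = \bigoplus_d J^{(d)}(\Q)[3]$, the sum over $d$ with $\sqrt d \in K$, and the summand $d=1$ is $J(\Q)[3] = \Z/3$. Hence the $3$-part is $\Z/3 \times \Z/3$ precisely when $\sqrt d \in K$ for some $d \ne 1$ with $J^{(d)}(\Q)[3] \ne 0$. The crucial point is that there are only finitely many such $d$: any $d$ with $J^{(d)}(\Q)[3] \ne 0$ satisfies $\Q(\sqrt d) \subseteq \Q(J[3])$, the finite $3$-division field, which I would compute from $f$; running over the quadratic subfields of $\Q(J[3])$ and testing each twisted $3$-torsion condition in Magma should single out $d \in \{1,-3\}$. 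Heuristically $d=-3$ is forced because the mod-$3$ cyclotomic character cuts out $\Q(\sqrt{-3}) = \Q(\zeta_3)$ and pairs against the rational $3$-torsion line under the Weil pairing, but the honest statement is the finite computation just described. Consequently the $3$-part equals $\Z/3\times\Z/3$ if $\sqrt{-3} \in K$ and $\Z/3$ otherwise.

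Finally I would note that $\Q(\zeta_{18}) = \Q(\zeta_9)$ is cyclic of degree $6$ over $\Q$, with unique quadratic subfield $\Q(\sqrt{-3})$; therefore for multi-quadratic $K$ the field $K_{(18)} = K \cap \Q(\zeta_{18})$ is $\Q$ or $\Q(\sqrt{-3})$, and ``$\sqrt{-3} \in K$'' is equivalent to ``$K_{(18)} = \Q(\sqrt{-3})$''. This yields both the two-case answer and the asserted identity $J_1(18)(K)_\tors = J_1(18)(K_{(18)})_\tors$. As in Theorems \ref{Jacobian14}--\ref{Jacobian16}, a $2$-descent computation of the ranks of $J$ over $\Q$ and $\Q(\sqrt{-3})$ rounds out the full Mordell--Weil groups, though it plays no role in the torsion statement.

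I expect the main obstacle to be the $3$-part: one must choose reduction primes whose $3$-torsion is exactly $\Z/3 \times \Z/3$, so as to cap the exponent and the rank at once, and one must verify rigorously that the only contributing quadratic twist is $d=-3$ --- i.e.\ that no other quadratic subfield of $\Q(J[3])$ carries a $\chi_d$-eigenvector of $J[3]$. By contrast the $2$-part, the primes $\ell \ne 3,7$, and the $7$-part should all fall out directly from the reduction bounds (for $\ell = 7$ one may, if necessary, rerun the twist decomposition and check that no nontrivial twist contributes).
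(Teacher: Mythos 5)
Your proposal is correct and follows the same overall strategy as the paper's proof: a hyperelliptic model, Magma computation of $J(\Q) = \Z/21$ and of $J(\F_{p^2})$ at small primes of good reduction (the paper uses $p = 7, 11$, obtaining $J(\F_{7^2}) = [3,651]$ and $J(\F_{11^2}) = [12,1092]$, which together give exactly your bound $\Z/21 \subseteq J(K)_\tors \subseteq \Z/3 \times \Z/21$), the quadratic-twist decomposition of Proposition \ref{exp2twists}, and $2$-descent for the ranks. Where you genuinely differ is the treatment of the $3$-part: the paper simply reports that the twist of $J$ by $-3$ has Mordell--Weil group $\Z/3$ over $\Q$, which yields the lower bound $\Z/3 \times \Z/3$ when $\sqrt{-3} \in K$, but it never explicitly excludes the possibility that some other twist $J^{(d)}$ contributes $3$-torsion --- which is precisely what the case $K_{(18)} = \Q$ requires. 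Your division-field argument (any contributing $d$ has $\Q(\sqrt{d}) \subseteq \Q(J[3])$, a finite, computable condition) supplies exactly this exclusion, so your write-up is at this step more complete than the paper's; a cheaper variant of the same idea is to note that $J$ has good reduction outside $\{2,3\}$, so by N\'eron--Ogg--Shafarevich $\Q(J[3])$ is ramified only at $2$ and $3$ and hence only $d \in \{-1, \pm 2, \pm 3, \pm 6\}$ need be tested. One caution: your closing remark that the $2$-descent ``plays no role in the torsion statement'' undersells it --- the theorem equates $J(K)_\tors$ with the \emph{full} Mordell--Weil group $J_1(18)(K \cap \Q(\zeta_{18}))$, so rank $0$ over $\Q$ and over $\Q(\sqrt{-3})$ is genuinely part of what must be proved; since you do carry out the descent anyway, this is a mischaracterization rather than a gap.
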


\begin{proof}
Using a model $y^2 = x^6 + 2x^5 + 5x^4 + 10x^3 + 10x^2 + 4x + 1$ of $X$, we can compute
$J(\Q) = \Z/21, J(\F_{7^2}) = [3, 651]$ and $J(\F_{11^2}) = [12, 1092]$.
Thus $J(K) \subseteq [3,21]$.
Now according to Magma, the twist of $J$ over $\Q(\sqrt{-3})$ has the Mordell-Weil group $\Z/3$
over $\Q$.

Thus computing the ranks we consider using $2$ descent we have the result.
\end{proof}

\begin{theorem} \label{Jacobian2,10}
\begin{align*}
J_1(2,10)(K)_{\tors} & = J_1(2,10)(K \cap \Q(\zeta_{10})) \\
 & \cong \left\{
\begin{array}{ll}
\Z/6 & (K_{(10)} = \Q) \\
\Z/2 \times \Z/6 & (K_{(10)} = \Q(\sqrt{5})).
\end{array}
\right.
\end{align*}
\end{theorem}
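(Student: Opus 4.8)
The plan is to follow the template of Theorem~\ref{Jacobian14}, to which this case is closely analogous: the target torsion groups are the same, and the relevant cyclotomic field $\Q(\zeta_{10}) = \Q(\zeta_5)$ is cyclic of degree $4$ over $\Q$, so its only multi-quadratic subfields are $\Q$ and the quadratic field $\Q(\sqrt5)$; in particular $K_{(10)} \in \{\Q, \Q(\sqrt5)\}$ for every multi-quadratic $K$. First I would extract from \cite{Sutherland} a model $y^2 = f(x)$ of $X = X_1(2,10)$. I expect $X$ to have genus $1$, so that $J = X$ is an elliptic curve and $f$ is a cubic factoring over $\Q$ as $f(x) = (x - r)g(x)$ with $r \in \Q$ and $g$ an irreducible quadratic. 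From the model I would compute $J(\Q) = \Z/6$ and $J(\F_{p^2})$ for two odd primes $p \nmid 10$ of good reduction, obtaining the upper bound $J(K)_{\tors} \subseteq \Z/2 \times \Z/6$: by the lemma opening Section~\ref{Jacobian} the residue fields of $K$ at odd good primes embed into $\F_{p^2}$, while reduction is injective on prime-to-$p$ torsion.

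For the $3$-part, since a multi-quadratic $K$ is abelian of exponent $2$ over $\Q$, Proposition~\ref{exp2twists} gives $J(K)[3^\infty] = \bigoplus_d J^{(d)}(\Q)[3^\infty]$. The left-hand side embeds into $J(\F_{p^2})[3^\infty]$, which I would arrange to be cyclic by choosing $p$ suitably; hence $J(K)[3^\infty]$ is cyclic. Being a direct sum that contains $J(\Q)[3^\infty] = \Z/3$ as a summand, it must therefore equal $\Z/3$. Thus the $3$-part is $\Z/3$ for every multi-quadratic $K$, independently of $K$.

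For the $2$-part, the bound above already rules out $\Z/4$, so it remains to decide when $J(K)$ has full $2$-torsion. The $2$-torsion of $J$ consists of the Weierstrass points $(r,0)$ and the two roots of $g$; the first is rational, while the latter two lie in $K$ exactly when $g$ splits over $K$, i.e.\ when the squarefree part of $\operatorname{disc} g$ is a square in $K$. I expect this squarefree part to be $5$, so that $J(K)[2] = \Z/2 \times \Z/2$ precisely when $\sqrt5 \in K$, equivalently $K_{(10)} = \Q(\sqrt5)$, and $J(K)[2] = \Z/2$ otherwise. Together with the previous paragraph this yields $J(K)_{\tors} \cong \Z/6$ when $K_{(10)} = \Q$ and $\Z/2 \times \Z/6$ when $K_{(10)} = \Q(\sqrt5)$, and shows that the torsion depends only on $K_{(10)}$.

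It remains to verify that these groups are the entire Mordell--Weil torsion, i.e.\ that the relevant ranks vanish. By Proposition~\ref{exp2twists} it suffices to show $\operatorname{rank} J(\Q) = 0$ and $\operatorname{rank} J^{(5)}(\Q) = 0$, both of which are descents over $\Q$, since $\operatorname{rank} J(\Q(\sqrt5)) = \operatorname{rank} J(\Q) + \operatorname{rank} J^{(5)}(\Q)$. I anticipate that the rank computation is the main obstacle: a $2$-descent only bounds the rank through the $2$-Selmer group and can be inconclusive when the Tate--Shafarevich group has nontrivial $2$-torsion, so if the naive descent fails to return rank $0$ I would fall back on a finer descent or an analytic rank bound. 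The twist argument and the determination of the $2$-torsion field, by contrast, should be routine once the explicit model is in hand.
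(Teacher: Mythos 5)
Your proposal is correct and follows essentially the same route as the paper: the paper identifies $X_1(2,10)$ with the elliptic curve 20a2 (whose cubic model indeed has one rational $2$-torsion point and two conjugate ones generating $\Q(\sqrt5)$), computes $J(\Q)=\Z/6$, $J(\F_9)=[2,6]$, $J(\F_{7^2})=[2,30]$ to get the upper bound $[2,6]$, and then argues as in the earlier theorems, which implicitly includes exactly your remaining steps (the $2$-torsion splitting field and rank zero over $\Q$ and $\Q(\sqrt5)$ via the twist decomposition of Proposition \ref{exp2twists}). Your write-up is in fact closer to the proof of Theorem \ref{Jacobian14} than to the cited Theorem \ref{Jacobian11}, but that is precisely what the terse ``similar calculation'' in the paper amounts to here.
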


\begin{proof}
The modular curve $X$ is the elliptic curve with Cremona label 20a2.
We compute that $J(\Q) = \Z/6, J(\F_9) = [2,6],$ and $J(\F_{7^2}) = [2,30].$
Then by a similar calculation to the proof of \ref{Jacobian11} we have the result.
\end{proof}

\begin{theorem} \label{Jacobian2,12}
\begin{align*}
J_1(2,12)(K)_{\tors} & = J_1(2,12)(K \cap \Q(\zeta_{12})) \\
& \cong \left\{
\begin{array}{ll}
\Z/4 & (K_{(12)} = \Q) \\
\Z/8 & (K_{(12)} = \Q(\sqrt{-1}) \text{ or } \Q(\sqrt{3})) \\
\Z/2 \times \Z/4 & (K_{(12)} = \Q(\sqrt{-3})) \\
\Z/2 \times \Z/8 & (K_{(12)} = \Q(\sqrt{-1}, \sqrt{3})).
\end{array}
\right.
\end{align*}
\end{theorem}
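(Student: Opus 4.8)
The plan is to follow the template established in the proof of Theorem~\ref{Jacobian15}, since the arithmetic of $X = X_1(2,12)$ over $\Q(\zeta_{12}) = \Q(\sqrt{-1},\sqrt{3})$ is structurally parallel to that of $X_1(15)$ over $\Q(\zeta_{15})$. First I would fix a model $y^2 = f(x)$ of $X$ taken from \cite{Sutherland}, and then sandwich the torsion subgroup by explicit point counting: compute $J(\Q)$ together with $J(\F_{p^2})$ for two small primes $p$ of good reduction (i.e. $p \nmid 12$) lying in different residue characteristics. By the trivial lemma that the residue field of $K$ at any odd prime is $\F_p$ or $\F_{p^2}$, the prime-to-$p$ torsion of $J(K)$ injects into $J(\F_{p^2})$, so intersecting the bounds coming from two such primes should pin $J(K)_{\tors}$ between $\Z/4$ and $\Z/2 \times \Z/8$ and kill any $3$-torsion, exactly as in the $X_1(15)$ case.

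Next I would separately detect the two ``extra'' cyclic pieces. For the full $2$-torsion, $J(K)$ contains $\Z/2 \times \Z/2$ if and only if $f(x)$ splits completely over $K$; factoring $f$ as a linear times a quadratic factor reduces this to whether the discriminant of the quadratic factor is a square in $K$, which I expect to be the condition $\sqrt{-3} \in K$. For the $8$-torsion, using the division polynomials $\varphi_n$, a point of order $8$ is defined over $K$ exactly when $\varphi_8/\varphi_4$ has a root $a \in K$ for which $y^2 - f(a)$ also has a root in $K$. Running Magma to extract the irreducible factors of $\varphi_8/\varphi_4$ whose splitting field has Galois group of exponent $2$, and checking the field of definition of the matching $y$-coordinate, I expect to find that an $8$-torsion point appears precisely when $\sqrt{-1} \in K$ or $\sqrt{3} \in K$. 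Combining the two criteria produces the four cases, with $\Z/2 \times \Z/8$ occurring exactly when both $\sqrt{-1}$ and $\sqrt{3}$ lie in $K$ (whence $\sqrt{-3} = \sqrt{-1}\,\sqrt{3} \in K$ automatically), i.e. when $K_{(12)} = \Q(\sqrt{-1},\sqrt{3})$.

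To upgrade these statements from ``contains'' to ``equals'', and to rule out any positive rank contribution, I would verify that the Mordell--Weil rank of $J$ vanishes over the maximal field in play, $\Q(\sqrt{-1},\sqrt{3})$, by a $2$-descent; Proposition~\ref{exp2twists} reduces this to a finite list of quadratic twists over $\Q$, each dispatched by the usual descent machinery in Magma. Since all the relevant square-root conditions involve only elements of $\Q(\zeta_{12})$, the resulting description of $J(K)_{\tors}$ as a function of $K_{(12)}$ then yields the claimed identification $J_1(2,12)(K)_{\tors} = J_1(2,12)(K \cap \Q(\zeta_{12}))$. The main obstacle is the $8$-torsion analysis: one must correctly isolate which irreducible factors of $\varphi_8/\varphi_4$ have abelian (exponent $2$) splitting field and then check the subtle compatibility condition that the associated $y$-coordinate lands in the \emph{same} quadratic field — the step where the $X_1(15)$ argument also concentrates its real content.
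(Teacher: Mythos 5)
Your proposal is correct and takes essentially the same approach as the paper: the paper's (very compressed) proof likewise computes $J(\Q) = \Z/4$, $J(\F_{5^2}) = [2,16]$, $J(\F_{7^2}) = [8,8]$ to sandwich the torsion, uses the division polynomial $\varphi_8/\varphi_4$ for the $8$-torsion conditions $\sqrt{-1}$ or $\sqrt{3} \in K$, and handles full $2$-torsion and the rank exactly as in the $X_1(14)$/$X_1(15)$ proofs you modeled yours on. The only cosmetic difference is that the paper identifies $X_1(2,12)$ as the elliptic curve with Cremona label 24a4 (so $J = X$) instead of working from a Sutherland hyperelliptic model, which changes nothing in the argument.
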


\begin{proof}
The modular curve $X$ is the elliptic curve with Cremona label 24a4.
We can compute $J(\Q) = \Z/4,
J(\F_{5^2}) = [2,16], J(\F_{7^2}) = [8,8]$ and the division polynomial $\varphi_8/\varphi_4$.
Therefore by a similar calculation to the proof of \ref{Jacobian11} we obtain the result.
\end{proof}

The modular curve $X_1(M,MN)$ is defined over $\Q(\zeta_M)$.
So henceforth in the theorem treating $X_1(M,MN)$, we assume that $\Q(\zeta_M) \subseteq K$.

\begin{theorem} \label{Jacobian3,9}
$J_1(3,9)(K)_{\tors} = J_1(3,9)(\Q(\sqrt{-3})) \cong \Z/3 \times \Z/3$
\end{theorem}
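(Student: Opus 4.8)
The plan is to handle $X = X_1(3,9)$ by the same two-reduction argument as in the proof of Theorem~\ref{Jacobian11}; the one genuinely new feature is that the base field is now $k = \Q(\zeta_3) = \Q(\sqrt{-3})$ rather than $\Q$. Since $X_1(3,9)$ is defined only over $\Q(\zeta_3)$ we have assumed $\Q(\zeta_3) \subseteq K$, and then $\operatorname{Gal}(K/k)$ is again an elementary abelian $2$-group, so the Lemma of this section still applies to the extension $K/k$. I would take Sutherland's model $y^2 = f(x)$ of $X$ over $k$ and let $J$ be its Jacobian (if $X$ turns out to have genus $1$, then $J = X$ and the argument is unchanged). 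A direct computation over $k$ with this model shows $\Z/3 \times \Z/3 \subseteq J(k)_\tors$; this already gives the lower bound $\Z/3 \times \Z/3 \subseteq J(K)_\tors$ for every $K \supseteq k$, and the fact that the full $3$-torsion is rational over $k$ is consistent with $\zeta_3 \in k$ through the Weil pairing.

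For the upper bound I would, exactly as in Theorem~\ref{Jacobian11}, reduce modulo two rational primes. Choose odd primes $p_1, p_2 \neq 3$ of good reduction and compute the group structures of $J(\F_{p_1^2})$ and $J(\F_{p_2^2})$ with Magma, arranging that $\gcd\bigl(|J(\F_{p_1^2})|, |J(\F_{p_2^2})|\bigr) = 9$ and that $p_i \nmid |J(\F_{p_j^2})|$ for $i \neq j$. By the Lemma, for any prime $\mathfrak{p}$ of $K$ above an odd rational prime the residue field is $\F_p$ or $\F_{p^2}$, so reduction embeds the prime-to-$p$ part of $J(K)_\tors$ into $J(\F_{p^2})$. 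Applying this at $p_1$ and at $p_2$ and comparing orders forces $|J(K)_\tors|$ to divide $9$; crucially, this bound uses only the bound on residue degrees, so it is uniform over all multi-quadratic $K \supseteq k$.

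Combining the two bounds yields $J(K)_\tors = \Z/3 \times \Z/3$ for every multi-quadratic $K \supseteq \Q(\sqrt{-3})$, in particular $J(K)_\tors = J(\Q(\sqrt{-3}))_\tors$; the group is forced to be $\Z/3 \times \Z/3$ rather than $\Z/9$ because it already contains $\Z/3 \times \Z/3$. The conceptual reason the answer is \emph{constant} in $K$ here, in contrast to Theorems~\ref{Jacobian14}--\ref{Jacobian2,12} where it grows with $K_{(N)}$, is that the finite-field bound coincides with the torsion already present over the base field $k$, so there is no room for growth: any new torsion would have to be of odd order (the $2$-part being forced trivial by the reductions) and is capped at $\Z/3 \times \Z/3$, which is attained already over $k$. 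In particular I expect to need neither Proposition~\ref{exp2twists} nor a $2$-descent, since the statement concerns only the torsion and follows from the two reductions alone.

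The main obstacle is the choice of $p_1$ and $p_2$. Because $\zeta_3 \in k$, the full $3$-torsion is rational over $k$ and hence $\Z/3 \times \Z/3 \subseteq J(\F_{p^2})$ for \emph{every} good $p \neq 3$; one must therefore exhibit primes at which no further $3$-torsion (a $\Z/9$) and no $2$-torsion is picked up, so that the two orders meet in exactly $9$. Producing such $p_1, p_2$ — and, if $X$ has genus $2$, carrying out the abelian-surface point counts over the residue fields of $k$, which are $\F_p$ or $\F_{p^2}$ according as $p$ splits or is inert in $k$ — is the one place where a genuine computation is required.
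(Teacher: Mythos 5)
Your proposal is correct and follows essentially the same route as the paper: the paper's proof also works over $k = \Q(\sqrt{-3})$ (identifying $X_1(3,9)$ as the base change of the genus-one curve 27a3, so $J = X$), computes $J(\Q(\sqrt{-3})) \cong \Z/3 \times \Z/3$ for the lower bound, and gets the upper bound from exactly the two reductions you ask for, namely $J(\F_{25}) \cong [6,6]$ and $J(\F_{7^2}) \cong [3,21]$, whose orders $36$ and $63$ have greatest common divisor $9$ with neither residue characteristic dividing the other order. Your choice to avoid Proposition \ref{exp2twists} and $2$-descent is also consistent with the paper, which needs no rank computation here since the statement concerns only torsion.
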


\begin{proof}
The modular curve $X$ is defined over $\Q(\sqrt{-3})$
and it is the base change of the elliptic curve with Cremona label 27a3.
We compute that $J(\Q(\sqrt{-3})) = [3,3], J(\F_{25}) = [6,6],$
and $J(\F_{7^2}) = [3,21]$.
Thus for all $\Q(\sqrt{-3}) \subseteq K$, we have $J(K)_\tors = J(\Q(\sqrt{-3})).$
\end{proof}

\begin{theorem} \label{Jacobian4,8}
\begin{align*}
J_1(4,8)(K)_{\tors} & = J(K \cap \Q(\zeta_8)) \\
& \cong \left\{
\begin{array}{ll}
\Z/2 \times \Z/4 & ( K_{(8)} = \Q(\sqrt{-1}) ) \\
\Z/4 \times \Z/4 & ( K_{(8)} = \Q(\sqrt{-1}, \sqrt{2}) ).
\end{array}
\right.
\end{align*}
\end{theorem}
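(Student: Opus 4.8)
The plan is to follow the now-established pattern from the preceding theorems (especially Theorems~\ref{Jacobian15} and \ref{Jacobian2,12}), since $X_1(4,8)$ is a low-genus modular curve defined over $\Q(\zeta_4) = \Q(\sqrt{-1})$. First I would pin down an explicit model of $X = X_1(4,8)$ over $\Q(\sqrt{-1})$ (using the models of \cite{Sutherland}), determine whether it is elliptic or of genus~$2$, and then compute $J(\Q(\sqrt{-1}))$ together with the orders $\#J(\F_{\mathfrak p})$ at two or three primes $\mathfrak p$ of $\Q(\sqrt{-1})$ of odd residue characteristic. Reducing modulo such primes gives injections of the prime-to-residue-characteristic torsion into the finite groups $J(\F_{\mathfrak p})$, so intersecting the resulting constraints sandwiches $J(K)_\tors$ between the known group over the base $\Q(\sqrt{-1})$ and an explicit upper bound. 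Since $\det\Gamma_1(4,8)$ forces the field of definition to contain $\Q(\sqrt{-1})$, the relevant cyclotomic part here is $K_{(8)}$, and the only nontrivial quadratic growth to account for is adjoining $\sqrt{2}$ (equivalently, passing to $\Q(\zeta_8)$), which should convert the $\Z/2\times\Z/4$ structure into the full $\Z/4\times\Z/4$.

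\medskip
\noindent
Next I would separate the $2$-primary and odd parts. For the odd part, I would apply Proposition~\ref{exp2twists} (with base field $k = \Q(\sqrt{-1})$, noting $K/k$ is again multi-quadratic of exponent~$2$): the odd torsion $J(K)[m] = \bigoplus_d J^{(d)}(k)[m]$ is controlled by the quadratic twists, and comparison of $J(k)[m]$ with the reductions shows the odd part does not grow — so the entire story is $2$-primary, consistent with the stated answers $\Z/2\times\Z/4$ and $\Z/4\times\Z/4$. For the $2$-primary part I would locate the full $2$-torsion from the Weierstrass points (as in Theorem~\ref{Jacobian16}): $J(K)[2]$ is full $\Z/2\times\Z/2$ exactly when the hyperelliptic polynomial $f(x)$ splits over $K$, which detects when the $\Z/2\times\Z/4$ case upgrades. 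To detect the extra $4$-torsion that produces $\Z/4\times\Z/4$, I would examine the relevant division polynomial $\varphi_8/\varphi_4$ (or the $2$-power division points directly on the genus-$2$ Jacobian), identify the irreducible factors whose splitting field has exponent~$2$ over $k$, and check that the associated $y$-coordinates live in $K$ precisely when $\sqrt 2 \in K$.

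\medskip
\noindent
Finally, I would rule out any further torsion growth by a rank computation: a $2$-descent over the top field $\Q(\zeta_8) = \Q(\sqrt{-1},\sqrt{2})$ showing $\operatorname{rank} J(\Q(\zeta_8)) = 0$ guarantees that $J(K)_\tors = J(K_{(8)})_\tors$ and that no positive-rank contribution masquerades as torsion. The main obstacle I anticipate is the \emph{exponent-$2$ factor analysis} of the $2$-power division data: unlike the odd primes, where Proposition~\ref{exp2twists} applies cleanly, the $2$-primary torsion is exactly where that proposition gives no information (it is stated only for odd $m$), so I must argue directly on the curve. Concretely, I must verify that the correct cyclotomic constraint is $\sqrt{2}$ rather than, say, $\sqrt{-1}$ or $\sqrt{-2}$ (all of which sit inside $\Q(\zeta_8)$), by matching each candidate $4$-torsion point's field of definition against the subfields of $\Q(\zeta_8)$ — this bookkeeping, rather than any single hard computation, is where the argument could go wrong.
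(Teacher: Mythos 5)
Your skeleton is the same as the paper's: compute the group over $\Q(\sqrt{-1})$, bound $J(K)_\tors$ by reduction at odd primes, and decide the remaining $2$-power ambiguity by division polynomials, with a rank check so that torsion over $K$ equals the full group over $K\cap\Q(\zeta_8)$. One simplification you miss (but allow for): $X_1(4,8)$ is the base change to $\Q(\sqrt{-1})$ of the elliptic curve 32a2, so $J=X$ is an elliptic curve and no genus-$2$/hyperelliptic apparatus is needed at all; moreover, since $J(\F_9)=[4,4]$ and $J(\F_{25})=[4,8]$ are $2$-groups, the reductions alone kill all odd torsion, so your appeal to Proposition \ref{exp2twists} for the odd part, while harmless, is superfluous.

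The genuine problem is in your middle paragraph: both of the concrete tools you name there are aimed at the wrong target. First, the $2$-torsion/Weierstrass-point step is vacuous here, because $J(\Q(\sqrt{-1}))=[2,4]$ already contains the full $2$-torsion $\Z/2\times\Z/2$; splitting of $f(x)$ over $K$ cannot distinguish the two cases. Second, $\varphi_8/\varphi_4$ detects points of exact order $8$, and these are already impossible: the sandwich $J(K)_\tors\subseteq J(\F_9)=[4,4]$ excludes order $8$ entirely, so that computation would return nothing. (You have pattern-matched from Theorems \ref{Jacobian15} and \ref{Jacobian2,12}, where the upper bound did contain $\Z/8$; here it does not.) The dichotomy $[2,4]$ versus $[4,4]$ is entirely about whether a \emph{second independent point of order $4$} is rational, so the relevant objects are the second and fourth division polynomials --- i.e., the $x$-coordinates of points of exact order $4$ and the fields generated by their $y$-coordinates --- exactly as the paper does; one then finds these fields sit inside $\Q(\zeta_8)$, giving the criterion $\sqrt{2}\in K$. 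Your final paragraph ("matching each candidate $4$-torsion point's field of definition against the subfields of $\Q(\zeta_8)$") does state the correct task, so the plan is rescuable, but as written the two named computations would both come back empty and leave the key step unexecuted.
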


\begin{proof}
The modular curve $X$ is defined over $\Q(\sqrt{-1})$
and it is the base change of the elliptic curve with Cremona label 32a2.
We compute that $J(\Q(\sqrt{-1})) = [2,4], J(\F_9) = [4,4],$
and $J(\F_{25}) = [4,8]$.
Thus for all $\Q(\sqrt{-1}) \subseteq K$, we have $[2,4] \subseteq J(K)_\tors \subseteq [4,4]$.
Finally computing the second and forth division polynomials, we obtain the result.
\end{proof}

\begin{theorem} \label{Jacobian6,6}
$J_1(6,6)(K)_{\tors} = J_1(6,6)(\Q(\sqrt{-3})) \cong \Z/2 \times \Z/6$.
\end{theorem}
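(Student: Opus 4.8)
The plan is to follow the template of Theorem \ref{Jacobian3,9}, since here $M = 6$ and $MN = 6$, so by the convention stated just above that theorem the curve $X = X_1(6,6)$ is defined over $\Q(\zeta_6) = \Q(\sqrt{-3})$ and we assume $\Q(\sqrt{-3}) \subseteq K$. First I would produce an explicit model of $X$: following the pattern of Theorems \ref{Jacobian3,9} and \ref{Jacobian4,8}, it should be a genus-one curve, hence the base change to $\Q(\sqrt{-3})$ of an elliptic curve with a suitable Cremona label, and using this model Magma returns $J(\Q(\sqrt{-3})) = [2,6]$. This already gives the lower bound $[2,6] \subseteq J(K)_{\tors}$ for every multi-quadratic $K \supseteq \Q(\sqrt{-3})$, so the entire content of the theorem is the matching upper bound.

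For the upper bound I would reduce modulo primes of good reduction. By the crucial lemma at the start of Section \ref{Jacobian}, for any prime $\mathfrak{p}$ of $K$ of odd residue characteristic $p$ the residue field embeds in $\F_{p^2}$; hence, choosing $p$ odd with $p \neq 3$ and of good reduction, the prime-to-$p$ torsion of $J(K)$ injects into $J(\F_{p^2})$. I would then pick two such primes $p_1, p_2$, compute $J(\F_{p_1^2})$ and $J(\F_{p_2^2})$ in Magma, and for each rational prime $\ell$ dividing $|J(K)_{\tors}|$ apply the injection at whichever $p_i \neq \ell$. The $p_i$-primary part is killed by the complementary prime, exactly as $\F_{25}$ and $\F_{7^2}$ are played off against each other in the proof of Theorem \ref{Jacobian3,9}.

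The success of this argument hinges on choosing $p_1, p_2$ whose reductions are tight enough that the combined bound is exactly $[2,6]$ and no larger; this is the main obstacle. Concretely I must cap the $2$-primary part at $[2,2]$ and the $3$-primary part at $\Z/3$, while ensuring the $\ell = 2$ and $\ell = 3$ parts are each constrained by at least one $p_i$ not equal to them. Since $2$ is inert and $3$ is ramified in $\Q(\sqrt{-3})$, both are unavailable as reduction primes, so a short search among small primes $p \equiv 1, 2 \pmod 3$ is needed to locate a pair that pins the torsion down. Once two such reductions sandwich $J(K)_{\tors}$ between $[2,6]$ and $[2,6]$, the equality $J_1(6,6)(K)_{\tors} = J_1(6,6)(\Q(\sqrt{-3})) \cong \Z/2 \times \Z/6$ follows for all multi-quadratic $K \supseteq \Q(\sqrt{-3})$, with no dependence on $K$ and hence no case distinction.
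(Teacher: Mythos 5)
Your proposal is correct and follows essentially the same route as the paper, which identifies $X_1(6,6)$ as the base change of the elliptic curve 36a1, computes $J(\Q(\sqrt{-3})) = [2,6]$, and plays off two quadratic reductions, $J(\F_{25}) = [6,6]$ and $J(\F_{49}) = [4,12]$, to cap the $2$-part at $[2,2]$ and the $3$-part at $\Z/3$. The only thing you leave unspecified --- the pair of primes found by your ``short search'' --- is supplied in the paper by $p_1 = 5$ and $p_2 = 7$, exactly as in your template of Theorem \ref{Jacobian3,9}.
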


\begin{proof}
The modular curve $X$ is defined over $\Q(\sqrt{-3})$
and it is the base change of the elliptic curve with Cremona label 36a1.
We compute that $J(\Q(\sqrt{-3})) = [2,6], J(\F_{25}) = [6,6],$
and $J(\F_{49}) = [4,12]$.
Hence the result.
\end{proof}

\section{The rational points on the modular curves} \label{rationals}

Using the description of the Mordell-Weil groups of the modular Jacobian varieties above,
we deduce the statements about torsion points of elliptic curves over multi-quadratic number fields easily.
For the case of quadratic fields, these are already known. (See \cite{KaNa}.)

In the following proofs, we use the fact that for $N \ge 4$ the moduli stack $\mathscr{M}_1(N)[1/N]$ is a scheme.
All statements about elliptic curves over a field $K$ is considered up to $K$-isomorphism.

\begin{theorem} \label{11torsion}
The following are equivalent:
\begin{enumerate}[(1)]
\item There exists an elliptic curve over $K$ whose Mordell-Weil group contains $\Z/11$. \label{11torsion1}
\item There exist infinitely many such curves. \label{11torsionfinite}
\item $\rank J_1(11)(K) \ge 1.$ \label{11rank}
\end{enumerate}
\end{theorem}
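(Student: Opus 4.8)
The plan is to exploit the moduli interpretation of $X = X_1(11)$. Since $11 \ge 4$, the stack $\mathscr{M}_1(11)[1/11]$ is a scheme, so away from the cusps the $K$-rational points of $X$ are in bijection with the $K$-isomorphism classes of pairs $(E,P)$, where $E/K$ is an elliptic curve and $P \in E(K)$ has exact order $11$. Because $11$ is prime, such a $P$ generates a copy of $\Z/11$, and conversely any copy of $\Z/11$ in $E(K)$ furnishes such a $P$; hence condition (\ref{11torsion1}) is equivalent to the existence of a non-cuspidal $K$-point on $X$. Moreover $X$ has genus $1$ and carries a rational cusp, so it is an elliptic curve canonically isomorphic to its Jacobian; in particular $X(K) \isom J(K)$ as abelian groups, where $J = J_1(11)$.

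First I would prove (\ref{11rank}) $\Rightarrow$ (\ref{11torsionfinite}). If $\rank J(K) \ge 1$, then $X(K) \isom J(K)$ is infinite, whereas $X$ has only finitely many cusps; hence $X$ has infinitely many non-cuspidal $K$-points. Composing with the finite $j$-invariant map $X \to X(1)$ (whose fibers are finite), these points attain infinitely many distinct $j$-values and therefore correspond to infinitely many pairwise non-isomorphic elliptic curves over $K$, each carrying a $K$-rational point of order $11$. The implication (\ref{11torsionfinite}) $\Rightarrow$ (\ref{11torsion1}) is immediate.

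For the remaining implication (\ref{11torsion1}) $\Rightarrow$ (\ref{11rank}) I would argue by contraposition. Suppose $\rank J(K) = 0$. By Theorem \ref{Jacobian11}, $J(K) = J(K)_\tors = J(\Q) \isom \Z/5$, so $X(K)$ has exactly five points. The five rational cusps of $X_1(11)$ already lie in $X(\Q) \subseteq X(K)$, so they exhaust $X(K)$; equivalently, the remaining five cusps are defined over $\Q(\zeta_{11})^+$, a field of degree $5$ that cannot embed into the $2$-power-degree field $K$. Hence every $K$-point of $X$ is a cusp, there is no pair $(E,P)$, and so no elliptic curve over $K$ has a point of order $11$.

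The substantive input is Theorem \ref{Jacobian11}, asserting that the torsion of $J_1(11)$ does not grow over any multi-quadratic field; granting it, the equivalence is essentially formal. The one place requiring genuine care is the final implication, namely the verification that the whole group $X(K) \isom \Z/5$ is cuspidal: this follows because the five rational cusps are forced to account for all five $K$-points, so I would make sure to pin down that $X_1(11)$ has precisely five rational cusps and that its other five cusps, being permuted cyclically over $\Q(\zeta_{11})^+$, remain irrational over $K$.
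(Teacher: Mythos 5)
Your proof is correct and takes essentially the same route as the paper's: identify $X_1(11)$ with its Jacobian via a rational cusp, use Theorem \ref{Jacobian11} to show that rank zero forces $X(K) = X(\Q)$, all of whose points are cusps, and observe that positive rank gives infinitely many non-cuspidal points and hence infinitely many curves. The details you supply (the moduli interpretation for $N \ge 4$, the $j$-map argument for pairwise non-isomorphy, and the count of rational cusps) are precisely the facts the paper's terse proof leaves implicit.
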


As we have noted, using twists we can check the rank condition by only checking the Mordell-Weil groups
of $\deg K$ Jacobian varieties of hyperelliptic curves over $\Q$.
It means that the calculation is very fast.

\begin{proof}
By Theorem \ref{Jacobian11} we have that the condition $\rank J(K) = 0$ is equivalent to
$X(K) = X(\Q)$.
Since we know that all points of $X(\Q)$ are cusps the result follows.
\end{proof}

For genus $\ge 2$ cases, we can get only a partial connection between the existence of certain torsions on elliptic curves
and the rank of the modular Jacobian variety corresponding to the torsion we consider.
However, we can obtain more properties about such an existence.
See \cite[section 2.6.]{Krumm}.

\begin{theorem} \label{13torsion}
Consider the following conditions:
\begin{enumerate}[(1)]
\item There exists an elliptic curve over $K$ whose Mordell-Weil group contains $\Z/13$. \label{13torsion1}
\item There exist finitely many such curves. \label{13torsionfinite}
\item $\rank J_1(13)(K) \ge 1.$ \label{13rank}
\end{enumerate}
Then we have (\ref{13torsion1}) $\Leftrightarrow$ (\ref{13torsionfinite}) $\Rightarrow$ (\ref{13rank}).
\end{theorem}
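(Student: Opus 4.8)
The plan is to run both parts through the fine-moduli interpretation of $X = X_1(13)$ together with the torsion computation of Theorem~\ref{Jacobian13}. Since $13 \ge 4$, the stack $\mathscr{M}_1(13)[1/13]$ is a scheme, so the non-cuspidal $K$-points of $X$ are in bijection with the $K$-isomorphism classes of pairs $(E,P)$, $E/K$ an elliptic curve and $P \in E(K)$ of exact order $13$; one elliptic curve as in (\ref{13torsion1}) accounts for only finitely many such $P$, and conversely. Thus all three conditions become statements about the finiteness and the arithmetic of $X(K)$ and $J = J_1(13)$.

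For (\ref{13torsion1}) $\Leftrightarrow$ (\ref{13torsionfinite}): the curve $X$ has genus $2$, so Faltings' theorem makes $X(K)$ finite for every number field $K$, hence the set of elliptic curves in (\ref{13torsion1}) is always finite. Therefore producing a single such curve already forces their number to be finite and positive, which is (\ref{13torsionfinite}); the reverse implication is trivial.

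For (\ref{13torsionfinite}) $\Rightarrow$ (\ref{13rank}) I would prove the contrapositive: assuming $\rank J(K) = 0$, I would show $X$ has no non-cuspidal $K$-point. By Theorem~\ref{Jacobian13}, $J(K)_{\tors} = J(\Q) \cong \Z/19$, so rank zero gives the equality $J(K) = J(K)_{\tors} = J(\Q)$ inside $J(\Kbar)$. Fixing a $\Q$-rational cusp $O$ (the cusp $\infty$ will do), let $\iota \colon X \to J$, $\iota(P) = [P - O]$, be the Abel--Jacobi embedding; it is defined over $\Q$ and, as $X$ has positive genus, injective on $\Kbar$-points. For $P \in X(K)$ we get $\iota(P) \in J(K) = J(\Q)$, so $\iota(P)$ is $\Gal(\Kbar/\Q)$-fixed; by $\Q$-equivariance $\iota(P^\sigma) = \iota(P)^\sigma = \iota(P)$, and injectivity forces $P^\sigma = P$ for all $\sigma$, i.e.\ $P \in X(\Q)$. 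Hence $X(K) = X(\Q)$, which by Mazur's theorem (as $13$ is absent from his list) consists only of cusps; so no elliptic curve over $K$ carries a $13$-torsion point, as required.

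The engine is the global identity $J(K)_{\tors} = J(\Q)$ of Theorem~\ref{Jacobian13}: it is exactly what lets rank zero collapse $J(K)$ onto $J(\Q)$ and run the descent step, and it is where the multi-quadratic hypothesis (via Proposition~\ref{exp2twists} and the inertia lemma) really enters, while the rest is formal. The genuine obstacle, and the reason only the one-sided implication can be asserted, is that Abel--Jacobi has no converse in genus $\ge 2$: a point of infinite order in $J(K)$ need not arise from a $K$-point of $X$, so $\rank J(K) \ge 1$ does not by itself yield an elliptic curve with $13$-torsion. Recovering a statement in the direction of (\ref{13rank}) $\Rightarrow$ (\ref{13torsion1}) would need the finer study of $X(K)$ indicated in \cite{Krumm}, not the Jacobian alone.
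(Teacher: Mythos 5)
Your proposal is correct and follows essentially the same route as the paper: Faltings' theorem gives (\ref{13torsion1}) $\Leftrightarrow$ (\ref{13torsionfinite}), and the contrapositive of (\ref{13torsion1}) $\Rightarrow$ (\ref{13rank}) uses Theorem \ref{Jacobian13} to collapse $J(K)$ onto $J(\Q)$ under the rank-zero hypothesis, forcing $X(K) = X(\Q)$ by Galois descent through the Abel--Jacobi embedding, whence all points are cusps. You merely make explicit the base-point choice and injectivity argument that the paper leaves implicit.
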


\begin{proof}
The first equivalence is just Faltings' theorem.
Assume that the rank is $0$. Then by Theorem \ref{Jacobian13},
we have that $X(K) \subseteq J(K)_{\tors} = J(\Q).$
Therefore every points of $X(K)$ are fixed by the Galois group of $K/\Q$.
Hence, since every $\Q$-rational point of $X$ is cusp, we have (\ref{13torsion1}) $\Rightarrow$ (\ref{13rank}).
\end{proof}

\begin{theorem} \label{14torsion}
Consider the following conditions:
\begin{enumerate}[(1)]
\item There exists an elliptic curve over $K$ whose Mordell-Weil group contains $\Z/14$. \label{14torsion1}
\item There exist at least 3 such curves. \label{14torsion3}
\item There exist infinitely many such curves. \label{14torsioninfinite}
\item $\rank J_1(14)(K) \ge 1.$ \label{14rank}
\end{enumerate}
If $\sqrt{-7} \not\in K$ then (\ref{14torsion1}) $\Leftrightarrow$ (\ref{14torsioninfinite}) $\Leftrightarrow$ (\ref{14rank}).
If $\sqrt{-7} \in K$ then there always exist at least 2 such curves and
(\ref{14torsion3}) $\Leftrightarrow$ (\ref{14torsioninfinite}) $\Leftrightarrow$ (\ref{14rank}).
\end{theorem}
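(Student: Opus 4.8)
The plan is to pass to the moduli description of $X = X_1(14)$ and read off point counts from Theorem~\ref{Jacobian14}. Since $14 \ge 4$ the stack $\mathscr{M}_1(14)[1/14]$ is a scheme, so the non-cuspidal $K$-points of $X$ are precisely the isomorphism classes of pairs $(E,P)$ with $E/K$ an elliptic curve and $P \in E(K)$ of exact order $14$; thus condition (\ref{14torsion1}) is equivalent to the existence of a non-cuspidal $K$-point. The fibre of $(E,P) \mapsto E$ over a fixed $E$ is the set of $\mathrm{Aut}_K(E)$-orbits of points of exact order $14$, which is finite; hence a curve contributes only finitely many points, and infinitely many non-cuspidal points force infinitely many curves. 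Because $X$ has genus $1$ with a rational cusp, I identify $X$ with $J = J_1(14)$ via that cusp, so $X(K) = J(K)$.

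The geometric input I would pin down first is the cusp picture. There are $12$ cusps on $X$ over $\Qbar$, and since $\Z/14$ does not occur over $\Q$ by Mazur's theorem, every point of $X(\Q) \cong \Z/6$ is a cusp, giving exactly $6$ rational cusps. The crux is that no further cusp becomes rational over $\Q(\sqrt{-7})$. I would verify this by computing the Galois action on the remaining $6$ cusps: it should factor through the diamond operators $(\Z/14)^\times/\{\pm 1\} \cong \Z/3$, so all cusp orbits have size dividing $3$; as these $6$ cusps are non-rational their orbits have size $3$, and an orbit of odd size is never defined over the quadratic field $\Q(\sqrt{-7})$. Establishing this factorization is the main obstacle, since the entire count in the $\sqrt{-7}\in K$ case rests on there being exactly $6$ rational cusps over $\Q(\sqrt{-7})$.

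Granting this, the case $\sqrt{-7} \notin K$ (so $K_{(14)} = \Q$) is immediate. If $\rank J(K) = 0$ then $X(K) = J(K) = J(\Q) \cong \Z/6$ consists of just the $6$ rational cusps, so there is no non-cuspidal $K$-point and (\ref{14torsion1}) fails; hence (\ref{14torsion1}) $\Rightarrow$ (\ref{14rank}). If $\rank J(K) \ge 1$ then $X(K)$ is infinite while the cusps are finite in number, so infinitely many points are non-cuspidal and give infinitely many curves, proving (\ref{14rank}) $\Rightarrow$ (\ref{14torsioninfinite}); and (\ref{14torsioninfinite}) $\Rightarrow$ (\ref{14torsion1}) is trivial. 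Thus (\ref{14torsion1}), (\ref{14torsioninfinite}), (\ref{14rank}) are equivalent.

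For the case $\sqrt{-7} \in K$ (so $K_{(14)} = \Q(\sqrt{-7})$), suppose first $\rank J(K) = 0$. By Theorem~\ref{Jacobian14} then $X(K) = J(K) = J(\Q(\sqrt{-7})) \cong \Z/2 \times \Z/6$, with all $12$ points defined over $\Q(\sqrt{-7})$, and removing the $6$ cusps leaves exactly $6$ non-cuspidal points. Since $i,\zeta_3 \notin \Q(\sqrt{-7})$ we have $\mathrm{Aut}(E) = \{\pm 1\}$ for each corresponding $E$, and Kamienny's quadratic classification forbids $\Z/2 \times \Z/14$, so $E(\Q(\sqrt{-7}))_{\tors}$ is cyclic of order $14$ and contributes exactly $3$ points (the six generators paired under $\pm 1$); the $6$ non-cuspidal points therefore come from exactly $2$ curves. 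Since $\rank J(K) \ge 1$ again yields infinitely many curves, at least $2$ such curves exist unconditionally, and (\ref{14torsion3}) holds precisely when $\rank J(K) \ge 1$. This gives the cycle (\ref{14rank}) $\Rightarrow$ (\ref{14torsioninfinite}) $\Rightarrow$ (\ref{14torsion3}) $\Rightarrow$ (\ref{14rank}), completing the plan.
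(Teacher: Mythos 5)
Your proposal is correct and follows essentially the same route as the paper's own proof: use Theorem \ref{Jacobian14} to pin down $X(K)=J(K)$ in the rank-zero case, count the $6$ cusps rational over $\Q(\sqrt{-7})$ to get $\#Y(K)=6$, invoke Kamienny's exclusion of $\Z/2\times\Z/14$ so that each curve accounts for $\#\bigl((\Z/14)^{*}/\{\pm 1\}\bigr)=3$ points, and conclude there are exactly $2$ curves; the only difference is that you spell out details the paper leaves implicit (the fine-moduli interpretation, $\mathrm{Aut}(E)=\{\pm 1\}$, and the fact that the remaining $6$ cusps lie in Galois orbits of size $3$, hence are cubic and never rational over a multi-quadratic field).
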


\begin{proof}
The first case is exactly the same as the Theorem \ref{11torsion}.
For the second case, by Theorem \ref{Jacobian14} we have $\# X(K) = 12$ or $= \infty$.
Thus counting the number of cusps, if the rank of $J(K)$ is zero, then we have that $\# Y(K) = 6$.
Now over a quadratic field, any elliptic curves have no subgroup $\Z/2 \times \Z/14$ in its Mordell-Weil groups.
(\cite{Kamienny})
Therefore, using $\# ( (\Z/14)^* / \{ \pm 1 \} ) = 3,$ we have that over $\Q(\sqrt{-7})$ there
exist exactly 2 elliptic curves whose Mordell-Weil groups contain $\Z/14$.
Hence the result.
\end{proof}

From this theorem we obtain the following corollary stating that if an elliptic curve over a large number field
has a certain torsion point then it is actually defined over a much smaller number field:

\begin{corollary} \label{exceptional14}
The only elliptic curves over $\Q(\sqrt{-7})$ whose Mordell-Weil groups contain $\Z/14$ are:
\begin{align*}
y^2 + \frac{14 + 3 \sqrt{-7}}{7} xy + \frac{-3 + \sqrt{-7}}{7} y = x^3 +  \frac{-3 + \sqrt{-7}}{7} x^2 \\
y^2 + \frac{7 + 2 \sqrt{-7}}{7} xy + \frac{1 + \sqrt{-7}}{7} y = x^3 +  \frac{1 + \sqrt{-7}}{7} x^2
\end{align*}

Further for a multi-quadratic number field $K$ containing $\sqrt{-7}$,
if the rank of $J_1(14)(K)$ is zero, then every elliptic curve over $K$ which contains $\Z/14$ in its Mordell-Weil group
is defined over $\Q(\sqrt{-7})$.
\end{corollary}

The equations are taken from \cite[Theorem 16.]{KaNa}.
These can be computed using the explicit equations of non cuspidal points for a model of $X_1(14)$
and its universal elliptic curve.

\begin{theorem} \label{15torsion}
Consider the following conditions:
\begin{enumerate}[(1)]
\item There exists an elliptic curve over $K$ whose Mordell-Weil group contains $\Z/15$. \label{15torsion1}
\item There exist at least 2 such curves. \label{15torsion2}
\item There exist at least 3 such curves. \label{15torsion3}
\item There exist infinitely many such curves. \label{15torsioninfinite}
\item $\rank J_1(15)(K) \ge 1.$ \label{15rank}
\end{enumerate}
If neither of $\sqrt{5}$ nor $\sqrt{-15}$ is in $K$ then (\ref{15torsion1}) $\Leftrightarrow$ (\ref{15torsioninfinite}) $\Leftrightarrow$ (\ref{15rank}).
If only one of $\sqrt{5}$ or $\sqrt{-15}$ is in $K$ then there always exists at least 1 such curve and
(\ref{15torsion2}) $\Leftrightarrow$ (\ref{15torsioninfinite}) $\Leftrightarrow$ (\ref{15rank}).
If both of $\sqrt{5}$ and $\sqrt{-15}$ are in $K$ then there always exist at least 2 such curves and
(\ref{15torsion3}) $\Leftrightarrow$ (\ref{15torsioninfinite}) $\Leftrightarrow$ (\ref{15rank}).
\end{theorem}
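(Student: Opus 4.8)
The plan is to exploit the fact that $X = X_1(15)$ has genus one and a rational cusp, so that $X$ is an elliptic curve, $J = J_1(15) = X$, and $X(K) = J(K)$. In particular $\rank J(K) = 0$ if and only if $X(K) = J(K)_\tors$, which is given by Theorem \ref{Jacobian15} in terms of $K_{(15)} = K \cap \Q(\zeta_{15})$. The equivalence (\ref{15torsioninfinite}) $\Leftrightarrow$ (\ref{15rank}) I would obtain exactly as in Theorem \ref{11torsion}: if $\rank J(K) \ge 1$ then $X(K)$ is infinite, and since $X$ has only finitely many cusps while each elliptic curve over $K$ accounts for at most $\#(\Z/15)^* = 8$ non-cuspidal points, infinitely many points force infinitely many curves; the converse is clear. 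Since (\ref{15torsioninfinite}) trivially implies each of (\ref{15torsion1}), (\ref{15torsion2}), (\ref{15torsion3}), everything reduces to the rank-zero case and to counting non-cuspidal points.

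Here I would use that $\mathscr{M}_1(15)[1/15]$ is a scheme, so that a non-cuspidal point of $Y = Y_1(15)$ over $K$ is the same as a pair $(E, P)$ with $E/K$ an elliptic curve and $P \in E(K)$ of exact order $15$, up to isomorphism. Fixing such an $E$, all multiples $aP$ with $a \in (\Z/15)^*$ are again $K$-rational points of order $15$, and the hyperelliptic involution $[-1]$ identifies $(E, aP)$ with $(E, -aP)$; hence $E$ accounts for exactly $\#\big((\Z/15)^* / \{\pm 1\}\big) = 4$ points of $Y(K)$, provided its $15$-torsion over $K$ is cyclic. The latter is automatic in our situation: full $5$-torsion would require $\zeta_5 \in K$, impossible over a multi-quadratic field, while a curve with full $3$-torsion would contribute $16$ points of $Y(K)$, incompatible with the sizes of $X(K)$ occurring at rank zero. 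Therefore, when $\rank J(K) = 0$, the number of elliptic curves over $K$ containing $\Z/15$ equals $\big(\#X(K) - c_K\big)/4$, where $c_K$ is the number of $K$-rational cusps of $X$.

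It then remains to read off $c_K$ in each case and compare with $\#X(K)$ from Theorem \ref{Jacobian15}. I expect the counts to be as follows: for $K_{(15)} = \Q$ or $\Q(\sqrt{-3})$ (that is, $\sqrt{5}, \sqrt{-15} \notin K$) all of the $4$ or $8$ torsion points are cusps, so there are no curves and (\ref{15torsion1}) $\Leftrightarrow$ (\ref{15rank}); for $K_{(15)} = \Q(\sqrt{5})$ or $\Q(\sqrt{-15})$ (exactly one of $\sqrt{5}, \sqrt{-15}$ in $K$) there are $4$ cusps among the $8$ torsion points, giving exactly one curve at rank zero, hence the permanent lower bound of $1$ together with (\ref{15torsion2}) $\Leftrightarrow$ (\ref{15rank}); and for $K_{(15)} = \Q(\sqrt{-3}, \sqrt{5})$ (both in $K$) there are $8$ cusps among the $16$ torsion points, giving exactly two curves at rank zero, hence the permanent lower bound of $2$ together with (\ref{15torsion3}) $\Leftrightarrow$ (\ref{15rank}). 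Combining these with (\ref{15torsioninfinite}) $\Leftrightarrow$ (\ref{15rank}) from the first paragraph yields all the asserted chains of equivalences.

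The main obstacle is precisely the determination of $c_K$: I would compute the Galois action on the $16$ cusps of $X_1(15)$, equivalently decide which of the explicitly known points of $J(K)_\tors$ on the model $y^2 = (x+21)(x^2 - 21x + 414)$ are cuspidal, verifying in particular that adjoining $\sqrt{-3}$ makes four further cusps rational whereas adjoining $\sqrt{5}$ or $\sqrt{-15}$ makes none rational but does produce non-cuspidal torsion points. This is the one step that goes beyond the group-theoretic bookkeeping and requires the explicit model together with the division-polynomial data already recorded in the proof of Theorem \ref{Jacobian15}.
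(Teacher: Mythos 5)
Your proposal is correct and is essentially the paper's own argument: the paper proves this theorem by declaring it ``exactly the same as Theorem \ref{14torsion},'' i.e.\ at rank zero use Theorem \ref{Jacobian15} to reduce $X(K)$ to $X(K_{(15)})$, count the $K$-rational cusps, and divide the remaining points by $\#\bigl((\Z/15)^*/\{\pm 1\}\bigr)=4$ to count curves, combined with the Theorem \ref{11torsion}-style argument for the rank $\ge 1$ direction. The cusp counts you ``expect'' are exactly right (the four denominator-$15$ cusps are rational, the four denominator-$5$ cusps become rational over $\Q(\sqrt{-3})$, and the remaining eight cusps require $\Q(\zeta_5)$ or $\Q(\zeta_{15})^+$, which are not multi-quadratic), and your cyclicity argument for the $15$-torsion is a clean substitute for the paper's appeal to Kamienny's quadratic classification.
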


\begin{proof}
Exactly the same as Theorem \ref{14torsion}.
\end{proof}

Similarly we have the following corollary:

\begin{corollary} \label{exceptional15}
The only elliptic curve over $\Q(\sqrt{5})$ whose Mordell-Weil group contains $\Z/15$ is:
\begin{align*}
y^2 = x^3 +  (281880 \sqrt{5} - 630315) x + 328392630 - 146861640 \sqrt{5}.
\end{align*}

The only elliptic curve over $\Q(\sqrt{-15})$ whose Mordell-Weil group contains $\Z/15$ is:
\begin{align*}
y^2 + \frac{145 + 7 \sqrt{-15}}{128} xy + \frac{265 + 79 \sqrt{-15}}{4096} y = x^3 +  \frac{265 + 79 \sqrt{-15}}{4096} x^2.
\end{align*}

Further for a multi-quadratic number field $K$ containing $\sqrt{5}$ or $\sqrt{-15}$,
if $\rank J_1(15)(K) = 0$, then every elliptic curve over $K$ which contains $\Z/15$ in its Mordell-Weil group
is defined over $\Q(\sqrt{5})$ or $\Q(\sqrt{-15})$.
\end{corollary}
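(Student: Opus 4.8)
The plan is to read the two curves off the modular curve $X = X_1(15)$, whose non-cuspidal $K$-points parametrize $K$-isomorphism classes of pairs $(E,P)$ with $P$ of order $15$. The proof of Theorem \ref{Jacobian15} already shows $\rank J_1(15)(\Q(\sqrt{-3},\sqrt 5)) = 0$, so the rank vanishes over every subfield, in particular over $\Q(\sqrt 5)$ and $\Q(\sqrt{-15})$. Since $X$ has genus $1$ and a rational cusp, $X \isom J = J_1(15)$, and rank $0$ forces $X(K) = J(K)_\tors$. By Theorem \ref{Jacobian15} this group is $\Z/8$ over $\Q(\sqrt 5)$ and $\Z/2 \times \Z/4$ over $\Q(\sqrt{-15})$, each of order $8$, so each of these fields carries exactly eight points on $X$.

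Next I would separate cusps from non-cusps using the diamond action. The group $(\Z/15)^*/\{\pm 1\}$, of order $4$, acts on $X$ over $\Q$ through the operators $\langle d\rangle$, hence on each set $X(K)$; on a point whose underlying curve has only $\{\pm 1\}$ as automorphisms the action is free, so its orbit has size $4$ and corresponds to a single elliptic curve $E$. Theorem \ref{15torsion} tells us that, when the rank is $0$ and exactly one of $\sqrt 5,\sqrt{-15}$ lies in $K$, there is precisely one such $E$; comparing with the count $\#X(K)=8$ shows that over each of $\Q(\sqrt 5)$ and $\Q(\sqrt{-15})$ the eight points split as four cusps and one orbit of four non-cuspidal points. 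To make the curve explicit I would substitute the coordinates of such a non-cuspidal point into the universal elliptic curve attached to the model $y^2 = (x+21)(x^2-21x+414)$ of $X_1(15)$, exactly as in the computation behind Corollary \ref{exceptional14}; this produces the two displayed Weierstrass equations.

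For the final assertion, let $K$ be multi-quadratic containing $\sqrt 5$ or $\sqrt{-15}$ with $\rank J_1(15)(K) = 0$, and let $E/K$ contain $\Z/15$, giving a non-cuspidal $x \in X(K)$. Since both $\sqrt 5$ and $\sqrt{-15}$ lie in $\Q(\zeta_{15})$, the field $K_{(15)} = K \cap \Q(\zeta_{15})$ is one of $\Q(\sqrt 5)$, $\Q(\sqrt{-15})$, or $\Q(\sqrt{-3},\sqrt 5)$. Combining rank $0$ with $J(K)_\tors = J(K_{(15)})_\tors$ from Theorem \ref{Jacobian15} and the identification $X \isom J$ gives $X(K) = X(K_{(15)})$, so $x$ is already $K_{(15)}$-rational; because $\mathscr{M}_1(15)[1/15]$ is a scheme, $E$ descends to $K_{(15)}$. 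In the first two cases we are done, and in the case $K_{(15)} = \Q(\sqrt{-3},\sqrt 5)$ Theorem \ref{15torsion} yields exactly two elliptic curves over this field, while the curves over $\Q(\sqrt 5)$ and over $\Q(\sqrt{-15}) \subseteq \Q(\sqrt{-3},\sqrt 5)$ already furnish two distinct non-cuspidal orbits after base change, so they account for both and $E$ is the base change of one of them. The hard part will be the explicit Weierstrass computation—pinning down the coordinates of the non-cuspidal points and the correct twist over each field—together with verifying the cusp-rationality count; the orbit bookkeeping itself is clean because curves with $j = 0, 1728$ admit no rational $15$-torsion, so every relevant orbit has size $4$.
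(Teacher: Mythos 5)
Your proposal is correct and takes essentially the same route as the paper, which likewise reads this corollary off Theorem \ref{15torsion} together with the Jacobian computations: identify $X_1(15)(K)$ with $J_1(15)(K)_\tors$ via the genus-one model, count diamond-operator orbits of non-cuspidal points against the group orders from Theorem \ref{Jacobian15}, descend from $K$ to $K_{(15)}$ using $X(K)=X(K_{(15)})$ and the fine moduli property, and defer the explicit Weierstrass equations to the computation recorded in \cite{KaNa}. The one assertion you leave unjustified is that the curves over $\Q(\sqrt{5})$ and $\Q(\sqrt{-15})$ remain non-isomorphic after base change to $\Q(\sqrt{-3},\sqrt{5})$ (needed so that they exhaust the exactly two curves given by Theorem \ref{15torsion} over that field); this follows either by comparing $j$-invariants (by \cite{Najman2} they are the distinct curves 50a3 and 50a4 up to twist) or by a counting argument --- their orbits lie in $X(\Q(\sqrt{5}))$ and $X(\Q(\sqrt{-15}))$, whose intersection $X(\Q)$ consists of cusps, so the orbits are disjoint, and a single curve carrying two inequivalent $15$-torsion structures would force more non-cuspidal points than $\#X_1(15)(\Q(\sqrt{-3},\sqrt{5}))=16$ permits --- so it is a one-line fix rather than a flaw in the approach.
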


The equations are also taken from \cite{KaNa}.

The curves in Corollary \ref{exceptional14} are not defined over $\Q$,
but the curves in Corollary \ref{exceptional15} are defined over $\Q$.
See \cite[Theorem 2]{Najman2}.

\begin{theorem} \label{16torsion}
Consider the following conditions:
\begin{enumerate}[(1)]
\item There exists an elliptic curve over $K$ whose Mordell-Weil group contains $\Z/16$. \label{16torsion1}
\item There exist finitely many such curves. \label{16torsionfinite}
\item $\rank J_1(16)(K) \ge 1.$ \label{16rank}
\end{enumerate}
Then we have (\ref{16torsion1}) $\Leftrightarrow$ (\ref{16torsionfinite}) $\Rightarrow$ (\ref{16rank}).
\end{theorem}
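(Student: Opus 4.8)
The plan is to follow the template of Theorem~\ref{13torsion}, exploiting that $X = X_1(16)$ is a hyperelliptic curve of genus $2$ (its model $y^2 = x(x^2+1)(x^2+2x-1)$ has a right-hand side of degree $5$) together with the explicit torsion computation of Theorem~\ref{Jacobian16}. Since $16 \ge 4$, the stack $\mathscr{M}_1(16)[1/16]$ is a scheme, so a non-cuspidal $K$-point of $X$ is the same datum as a $K$-isomorphism class of elliptic curve over $K$ carrying a point of exact order $16$; thus the curves appearing in all three conditions are parametrized by the non-cuspidal points of $X(K)$.

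For the equivalence (\ref{16torsion1}) $\Leftrightarrow$ (\ref{16torsionfinite}) I would invoke Faltings' theorem: as $X$ has genus $2$, the set $X(K)$ is finite, hence the number of elliptic curves over $K$ containing $\Z/16$ is always finite, and ``there exists at least one such curve'' coincides with ``there exist finitely many, and at least one''. This is exactly the first sentence of the proof of Theorem~\ref{13torsion}.

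For (\ref{16torsion1}) $\Rightarrow$ (\ref{16rank}) I would argue the contrapositive. Suppose $\rank J(K) = 0$, so $J(K) = J(K)_{\tors}$. By Theorem~\ref{Jacobian16} this torsion group equals $J(K_{(16)})_{\tors}$, i.e. it is already defined over $K_{(16)} = K \cap \Q(\zeta_{16})$. Fixing the Abel-Jacobi embedding $\iota \colon X \hookrightarrow J$ attached to a rational cusp, which is defined over $\Q$ and hence over $K_{(16)}$, any $P \in X(K)$ has image $\iota(P) \in J(K) = J(K_{(16)})_{\tors}$, so $\iota(P)$ is $K_{(16)}$-rational; since $\iota$ is a closed immersion defined over $K_{(16)}$ this forces $P \in X(K_{(16)})$. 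Therefore $X(K) = X(K_{(16)})$, and moreover $\rank J(K_{(16)}) \le \rank J(K) = 0$.

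It then suffices to show that, for a field $F = K_{(16)}$ with $\rank J(F) = 0$, the set $X(F)$ consists only of cusps. Here $F$ is a multi-quadratic subfield of $\Q(\zeta_{16})$, hence one of the finitely many fields $\Q, \Q(\sqrt{-1}), \Q(\sqrt{2}), \Q(\sqrt{-2}), \Q(\sqrt{-1},\sqrt{2})$. For each of them $\iota$ identifies $X(F)$ with a subset of the finite group $J(F)_{\tors}$ determined in Theorem~\ref{Jacobian16}, and one checks directly that every point of $X(F)$ is a cusp; over $\Q$ this is Mazur's theorem and over $\Q(\sqrt{-1})$ it is Najman's classification \cite{Najman}, while the real quadratic, imaginary quadratic, and biquadratic cases reduce to a finite verification. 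Granting this, $X(K) = X(F)$ contains only cusps, so no elliptic curve over $K$ has a point of order $16$, which is the contrapositive of (\ref{16torsion1}) $\Rightarrow$ (\ref{16rank}). The main obstacle is precisely this last step: for a genus-$2$ curve the vanishing of the Jacobian rank does not by itself rule out non-cuspidal points, so one must pin down the finite set $X(F)$ explicitly and identify each of its points as a cusp.
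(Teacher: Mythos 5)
Your proposal tracks the paper's proof through its first two steps: the Faltings equivalence, and the reduction (via the Abel--Jacobi embedding at a rational cusp together with Theorem \ref{Jacobian16}) from $X(K)$ to $X(K_{(16)})$, where $K_{(16)}$ is one of $\Q$, $\Q(\sqrt{-1})$, $\Q(\sqrt{2})$, $\Q(\sqrt{-2})$, $\Q(\sqrt{-1},\sqrt{2})$. But the gap is exactly where you flag it, and it is genuine: you never prove that $X(F)$ consists only of cusps for $F = \Q(\sqrt{2})$, $\Q(\sqrt{-2})$, or the biquadratic field $\Q(\sqrt{-1},\sqrt{2})$. Mazur covers $\Q$ and Najman covers $\Q(\sqrt{-1})$, but $\Z/16$ does occur as torsion over suitable quadratic fields (it is in the Kenku--Momose--Kamienny list quoted in the introduction), so no soft argument rules it out over $\Q(\sqrt{\pm 2})$, and the biquadratic case is precisely the new content of this theorem, with no prior classification to cite. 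Saying these cases ``reduce to a finite verification'' names the problem rather than solving it; moreover Theorem \ref{Jacobian16} gives only the abstract group structure of $J(F)_{\tors}$, not its elements as explicit divisor classes, so even the verification you envision needs an additional device to decide which torsion classes lie on $\iota(X)$ and which of those are cuspidal.

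The paper supplies exactly this missing device, and it is the heart of its proof. Since $\rank J(K) = 0$ and $J(K)_{\tors} \subseteq [2,2,2,10]$ has order prime to $3$, the reduction map at any prime of $K$ above $3$ is injective on $J(K)$; by the lemma at the start of Section \ref{Jacobian} the residue field of such a prime is contained in $\F_9$, so $J(K) \injects J(\F_9)$ and hence $X(K) \injects X(\F_9)$. By the Hasse bound an elliptic curve over $\F_9$ has at most $16$ rational points, and a finite enumeration (Magma) shows that no elliptic curve over $\F_9$ has a rational point of order $16$; thus $X(\F_9)$ contains no non-cuspidal points. Comparing the number of cusps of $X$ rational over $\Q(\sqrt{-1},\sqrt{2})$ with $\# X(\F_9)$ then forces every point of $X(K)$ to be a cusp, which is the desired contrapositive. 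Your alternative --- enumerating $J(F)_{\tors}$ via Mumford representations and checking membership in $\iota(X)$, in the style of Lemma \ref{exceptional} --- could in principle be carried out, but as written your proposal leaves the decisive step unproved.
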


\begin{proof}
The first equivalence is just Faltings' theorem.
Assume that the rank is $0$. Then by Theorem \ref{Jacobian16},
we have that $X(K) \subseteq J(K)_{\tors}$ and the later group is equal to $J(K \cap \Q(\sqrt{-1}, \sqrt{2}))$.
Therefore $X(K) = X(K \cap \Q(\sqrt{-1}, \sqrt{2})).$
So in order to show $Y(K) = \varnothing$, it suffices to show that all points of $X(\Q(\sqrt{-1}, \sqrt{2}))$ are cusps.

Over $\F_9$, by the Hasse bound an elliptic curve has order at most $16$ points.
Computing all elliptic curves (using Magma), we have that
no elliptic curves contain subgroups isomorphic to $\Z/16$ in the group of the rational points.
Therefore we have that $X(\F_9) = \varnothing$.
On the other hand since we are assuming that the rank is $0$, under the reduction map at a prime above $3$,
the prime to $3$ part of $J(K)$ injects into $J(\F_9)$.
By Theorem \ref{Jacobian16}, it is the whole of $J(K)$, hence $J(K)$ injects into $J(\F_9)$.
Therefore since of course $X(K) \to J(K)$ is injective, we have that $X(K) \to X(\F_9)$ is also injective.
Thus counting cusps over $\Q(\sqrt{-1}, \sqrt{2})$ and over $\F_9$, we have that all points of $X(K)$ are cusps.
\end{proof}

\begin{theorem} \label{18torsion}
Consider the following conditions:
\begin{enumerate}[(1)]
\item There exists an elliptic curve over $K$ whose Mordell-Weil group contains $\Z/18$. \label{18torsion1}
\item There exist finitely many such curves. \label{18torsionfinite}
\item $\rank J_1(18)(K) \ge 1.$ \label{18rank}
\end{enumerate}
Then we have (\ref{18torsion1}) $\Leftrightarrow$ (\ref{18torsionfinite}) $\Rightarrow$ (\ref{18rank}).
\end{theorem}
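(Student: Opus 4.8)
The plan is to follow the pattern established in the proofs of Theorems \ref{13torsion} and \ref{16torsion}. The equivalence (\ref{18torsion1}) $\Leftrightarrow$ (\ref{18torsionfinite}) is immediate from Faltings' theorem, since $X = X_1(18)$ has genus $\ge 2$, so any curve has only finitely many $K$-rational points and the existence of one non-cuspidal point does not force infinitely many. The substance is the implication (\ref{18torsion1}) $\Rightarrow$ (\ref{18rank}), which I would prove in contrapositive form: assuming $\rank J(K) = 0$, I would show that $Y(K) = \varnothing$, i.e.\ every $K$-rational point of $X$ is a cusp.

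First I would invoke Theorem \ref{Jacobian18}, which gives $J(K)_{\tors} = J(K \cap \Q(\zeta_{18}))$, a group contained in $[3,21]$. Since the rank is assumed zero, $J(K) = J(K)_{\tors}$ equals $J(K_{(18)})$, which is either $\Z/21$ or $\Z/3 \times \Z/21$ depending on whether $\sqrt{-3} \in K$. In particular $X(K) \subseteq J(K) = J(K_{(18)})$, so the natural inclusion forces $X(K) = X(K_{(18)})$; it therefore suffices to show that all points of $X(\Q(\sqrt{-3}))$ are cusps. I would then pick a prime of good reduction whose residue field is $\F_9$ (as in the proof of Theorem \ref{16torsion}, reduction at $3$ is ruled out, but a prime above $7$ giving $\F_{49}$, or another small odd prime, works) and argue that the prime-to-$p$ part of $J(K)$ injects into the Jacobian over the residue field under reduction. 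Since $J(K)$ is all torsion of order dividing $3 \cdot 21 = 63$, choosing $p \nmid 63$ makes the full reduction map $J(K) \hookrightarrow J(\F_{p^2})$ injective, hence $X(K) \hookrightarrow X(\F_{p^2})$ is injective as well.

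The concluding step is a point count: I would compare the number of $\F_{p^2}$-rational points of $X$ that arise from elliptic curves with a genuine $18$-torsion point against the cusps, using the Hasse bound to limit the possible group orders over $\F_{p^2}$ exactly as in Theorem \ref{16torsion}. Over a field of size $q = p^2$ with $p$ a small odd prime, an elliptic curve has at most $q + 1 + 2\sqrt{q}$ points, and for suitably small $q$ this bound can exclude a point of order $18$ (or, combined with the rationality of the $\mu_{18}$-Weil pairing, constrain which $\F_{p^2}$ carry such curves); enumerating all elliptic curves over $\F_{p^2}$ by Magma then shows none contains $\Z/18$. Matching the injectively-lifted count of $X(K)$ against the cusps of $X$ over $\Q(\sqrt{-3})$ and over $\F_{p^2}$ forces every $K$-point to be cuspidal, giving $Y(K) = \varnothing$.

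The main obstacle I anticipate is the choice of auxiliary prime $p$ together with the Hasse-bound exclusion. Unlike the $\Z/16$ case, where $\F_9$ gives such a tight Hasse window that no elliptic curve can carry $16$-torsion, the congruence conditions forcing a rational $18$-torsion point require the residue field to contain the relevant roots of unity and to have enough room in the Hasse interval, so the smallest usable $q$ may already admit curves with an $18$-torsion point; in that case the clean injection-plus-count argument fails and one must instead verify directly, via the explicit model $y^2 = x^6 + 2x^5 + 5x^4 + 10x^3 + 10x^2 + 4x + 1$ and its cusps, that the lifted points are exhausted by the known rational cusps. Pinning down a single prime for which both the injectivity and the point-count exclusion hold simultaneously is the delicate part, and it is the step I would expect to require the most explicit computation.
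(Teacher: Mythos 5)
Your overall skeleton matches the paper's: Faltings' theorem gives (\ref{18torsion1}) $\Leftrightarrow$ (\ref{18torsionfinite}), and assuming $\rank J_1(18)(K)=0$, Theorem \ref{Jacobian18} forces $X(K)=X(K\cap\Q(\zeta_{18}))\subseteq X(\Q(\sqrt{-3}))$, so everything reduces to showing that $X_1(18)(\Q(\sqrt{-3}))$ contains no non-cuspidal point. But your method for this last step --- pick a prime $p\nmid 63$ of good reduction, inject $X(K)$ into $X(\F_{p^2})$, and exclude $18$-torsion over $\F_{p^2}$ by the Hasse bound plus enumeration --- cannot work for \emph{any} admissible prime. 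Good reduction of $X_1(18)$ forces $p\nmid 18$, so $p\ge 5$ and $q=p^2\ge 25$; the Hasse interval $[q+1-2p,\,q+1+2p]$ has length $4p\ge 20>18$, so it always contains an integer $N$ divisible by $18$, and such an $N$ can be chosen with trace $a=q+1-N$ prime to $p$ (check $p=5,7$ directly; for $p\ge 11$ the interval contains at least two multiples of $18$, of which at most one can have trace in $\{0,\pm p,\pm 2p\}$). By the Waterhouse--R\"uck classification of group structures, an ordinary elliptic curve over $\F_{p^2}$ with cyclic Mordell--Weil group $\Z/N$ then exists, and $18\mid N$ gives a point of order $18$. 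Concretely, already over $\F_{25}$ there is a curve with group $\Z/18$ (trace $8$). Hence $Y_1(18)(\F_{p^2})\neq\varnothing$ for every usable $p$, so $\#X(\F_{p^2})$ strictly exceeds the number of reductions of cuspidal points and the injection-plus-count argument can never close. This is precisely what distinguishes $N=18$ from $N=16$, where $\Z/16\subseteq E(\F_9)$ is impossible by Hasse; your own caveat anticipates the problem, but the situation is not ``delicate'' --- it is fatal, and your fallback (verify ``directly'' that the lifted points are exhausted by the cusps) is not a proof, since without a finite-field exclusion nothing forces a lifted point to be cuspidal.

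The paper sidesteps all of this: its proof is one line, reducing exactly as you do to the statement $Y_1(18)(\Q(\sqrt{-3}))=\varnothing$ and then citing Najman's classification of torsion over quadratic cyclotomic fields \cite{Najman} for that fact, which is proved there by global methods (analysis of quadratic points on the genus-$2$ curve $X_1(18)$ using the Mordell--Weil group of its Jacobian over $\Q(\sqrt{-3})$), not by reduction modulo a prime. To repair your argument you would need such a global input: either quote \cite{Najman}, or redo an analysis of quadratic points in the spirit of Lemma \ref{exceptional} --- being careful that in the paper that lemma is itself deduced from Theorem \ref{18torsion}, so reproving it independently is needed to avoid circularity.
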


\begin{proof}
Exactly the same as the theorems above.
The key point is that $Y(\Q(\sqrt{-3})) = \varnothing$.
This is \cite{Najman}.
\end{proof}

Henceforth as in the section \ref{Jacobian}, we assume that $\Q(\zeta_M) \subseteq K$ in the
theorem treating $X_1(M,MN)$.
All proofs are completely the same as the proof of Theorem \ref{11torsion}.

\begin{theorem} \label{2,10torsion}
The following are equivalent:
\begin{enumerate}[(1)]
\item There exists an elliptic curve over $K$ whose Mordell-Weil group contains
	$\Z/2 \times \Z/10$. \label{2,10torsion1}
\item There exist infinitely many such curves. \label{2,10torsionfinite}
\item $\rank J_1(2,10)(K) \ge 1.$ \label{2,10rank}
\end{enumerate}
\end{theorem}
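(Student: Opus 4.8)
The plan is to follow the exact template established by Theorem \ref{11torsion}, since the relevant modular curve $X = X_1(2,10)$ has genus $\le 1$ and Theorem \ref{Jacobian2,10} already pins down the torsion of $J = J_1(2,10)$. The logical skeleton is a cycle of implications: I would prove (\ref{2,10torsion1}) $\Rightarrow$ (\ref{2,10rank}), then (\ref{2,10rank}) $\Rightarrow$ (\ref{2,10torsionfinite}), and note that (\ref{2,10torsionfinite}) $\Rightarrow$ (\ref{2,10torsion1}) is trivial. The arithmetic input is that $X$ is an elliptic curve (Cremona label 20a2), so it is its own Jacobian, and the noncuspidal points $Y(K)$ of $X$ correspond precisely to $K$-isomorphism classes of elliptic curves equipped with the level structure $\Gamma_1(2,10)$, i.e.\ curves whose Mordell–Weil group contains $\Z/2 \times \Z/10$.

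First I would establish (\ref{2,10torsion1}) $\Rightarrow$ (\ref{2,10rank}) by contraposition, exactly as in Theorem \ref{11torsion}. Suppose $\rank J(K) = 0$. By Theorem \ref{Jacobian2,10}, $J(K)_\tors = J(K \cap \Q(\zeta_{10}))$, so $X(K) = X(K \cap \Q(\zeta_{10}))$, a finite set equal to the rational points over the small field $\Q$ or $\Q(\sqrt5)$. The key fact to record is that all these points are cusps: one checks that over $\Q$ (and over $\Q(\sqrt5)$) every point of $X$ is a cusp, so $Y(K) = \varnothing$ and no elliptic curve over $K$ has the prescribed torsion. For the converse direction, if $\rank J(K) \ge 1$ then $X(K)$ is infinite (an elliptic curve of positive rank has infinitely many points), and since the cusps form a finite set, infinitely many of these points lie in $Y(K)$, each giving a distinct $K$-isomorphism class of elliptic curve with $\Z/2 \times \Z/10 \subseteq E(K)$; this yields (\ref{2,10torsionfinite}).

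The one point requiring genuine verification, as flagged by the remark preceding the theorem that ``all proofs are completely the same as the proof of Theorem \ref{11torsion},'' is that the full rational points $X(\Q(\zeta_{10}) \cap K)$ are accounted for by cusps alone, with no sporadic noncuspidal point. I would confirm the cusp count on the model 20a2 directly in Magma, matching the order of $X(\Q)$ and $X(\Q(\sqrt5))$ against the number of cusps of $X_1(2,10)$ over those fields. I expect this cusp-counting step to be the main (and only) obstacle; everything else is formal. Because the hypothesis $\Q(\zeta_M) \subseteq K$ with $M = 2$ is vacuous here, no extra care about the field of definition of $X$ is needed, and the argument goes through for every multi-quadratic $K$ uniformly.
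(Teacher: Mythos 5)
Your proposal is correct and takes essentially the same route as the paper: the paper's entire proof of this theorem is the remark that ``all proofs are completely the same as the proof of Theorem \ref{11torsion},'' and your argument is precisely that template spelled out --- use Theorem \ref{Jacobian2,10} to get $X(K)=X(K\cap\Q(\zeta_{10}))$ when $\rank J(K)=0$, check that all points of $X(\Q)$ and $X(\Q(\sqrt{5}))$ are cusps, and note that positive rank gives infinitely many noncuspidal points and hence infinitely many curves. Your identification of the cusp verification over $K_{(10)}$ as the only nonformal step (the $12$ torsion points over $\Q(\sqrt{5})$ are exactly the $12$ cusps) is the same arithmetic input the paper relies on implicitly.
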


\begin{theorem} \label{2,12torsion}
The following are equivalent:
\begin{enumerate}[(1)]
\item There exists an elliptic curve over $K$ whose Mordell-Weil group contains
	$\Z/2 \times \Z/12$. \label{2,12torsion1}
\item There exist infinitely many such curves. \label{2,12torsionfinite}
\item $\rank J_1(2,12)(K) \ge 1.$ \label{2,12rank}
\end{enumerate}
\end{theorem}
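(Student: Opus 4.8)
The plan is to run the argument of Theorem \ref{11torsion} essentially verbatim, the only genuinely new ingredient being a count of the cusps of $X = X_1(2,12)$ over the relevant cyclotomic fields. First I would record the geometric setup: by Theorem \ref{Jacobian2,12} the curve $X$ is the elliptic curve $24a4$, so it has genus $1$ and, taking the cusp at infinity as origin, is canonically isomorphic to its own Jacobian $J = J_1(2,12)$; in particular $X(K) = J(K)$ as abelian groups. Since $M = 2$ the standing hypothesis $\Q(\zeta_2) = \Q \subseteq K$ is automatic. A non-cuspidal point of $Y(K) = X(K) \setminus \{\text{cusps}\}$ is exactly an isomorphism class of an elliptic curve over $K$ equipped with a $\Z/2 \times \Z/12$ level structure, hence with $\Z/2 \times \Z/12 \hookrightarrow E(K)_\tors$.

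Next I would dispatch the easy implications, arranging the three conditions in a cycle. The implication (\ref{2,12torsionfinite}) $\Rightarrow$ (\ref{2,12torsion1}) is trivial. For (\ref{2,12rank}) $\Rightarrow$ (\ref{2,12torsionfinite}): if $\rank J(K) \ge 1$ then $X(K) = J(K)$ is infinite while $X$ has only finitely many cusps, so $Y(K)$ is infinite; as the forgetful map from $Y$ to the $j$-line has finite fibres, these points realise infinitely many distinct $j$-invariants and hence infinitely many mutually non-isomorphic elliptic curves over $K$ carrying the prescribed torsion.

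The substantive direction is (\ref{2,12torsion1}) $\Rightarrow$ (\ref{2,12rank}), which I would prove by contraposition exactly as in Theorems \ref{11torsion} and \ref{14torsion}. Assume $\rank J(K) = 0$. By Theorem \ref{Jacobian2,12} this is equivalent to $X(K) = J(K)_\tors = J(K_{(12)})_\tors$, a finite group whose order is $4, 8, 8, 8$, or $16$ according to the five possibilities $K_{(12)} = \Q,\ \Q(\sqrt{-1}),\ \Q(\sqrt{3}),\ \Q(\sqrt{-3})$, or $\Q(\sqrt{-1},\sqrt{3})$. It then suffices to show that in each case every point of $X(K_{(12)})$ is a cusp, for then $Y(K) = Y(K_{(12)}) = \varnothing$ and no elliptic curve over $K$ can carry $\Z/2 \times \Z/12$. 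Concretely I would determine the cusps of $X_1(2,12)$ together with their fields of definition and check that the number rational over each $K_{(12)}$ is precisely $4, 8, 8, 8, 16$, so that the finite torsion group is exhausted by cusps with nothing left over.

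The main obstacle is this last cusp count: one must verify that the $K_{(12)}$-rational cusps fill up $J(K_{(12)})_\tors$ exactly, leaving no sporadic non-cuspidal rational point. Unlike the $X_1(14)$ and $X_1(15)$ cases, where the analogous count leaves a nonzero remainder and produces the exceptional curves of Corollaries \ref{exceptional14} and \ref{exceptional15}, here the counts must match on the nose, which is precisely why the equivalence is clean with no exceptional family. I would carry out the count either from the standard description of the cusps of $X_1(M,MN)$ and the action of $\mathrm{Gal}(\Q(\zeta_{12})/\Q)$ on them, or directly on the explicit model of $24a4$ by locating the points over which the universal curve degenerates; matching these against Theorem \ref{Jacobian2,12} in all five cases then closes the cycle and establishes the equivalence.
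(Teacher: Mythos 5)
Your proposal is correct and follows essentially the same route as the paper, which simply declares the proof to be ``completely the same as the proof of Theorem \ref{11torsion}'': rank zero forces $X(K) = J(K)_{\tors} = J_1(2,12)(K_{(12)})_{\tors}$ by Theorem \ref{Jacobian2,12}, and one then checks that the $K_{(12)}$-rational points are all cusps (indeed $X_1(2,12)$ has $16$ cusps, all defined over $\Q(\zeta_{12})$, exactly filling the group $\Z/2 \times \Z/8$ of order $16$, so the count matches on the nose as you predicted). Your only deviation is cosmetic: checking the five cases of $K_{(12)}$ separately is redundant, since once the cusps exhaust $J(\Q(\zeta_{12}))_{\tors}$ every torsion point over any subfield is automatically a cusp.
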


\begin{theorem} \label{3,9torsion}
The following are equivalent:
\begin{enumerate}[(1)]
\item There exists an elliptic curve over $K$ whose Mordell-Weil group contains
	$\Z/3 \times \Z/9$. \label{3,9torsion1}
\item There exist infinitely many such curves. \label{3,9torsionfinite}
\item $\rank J_1(3,9)(K) \ge 1.$ \label{3,9rank}
\end{enumerate}
\end{theorem}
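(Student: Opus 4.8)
The plan is to reproduce the argument of Theorem~\ref{11torsion} essentially verbatim, the decisive point being that here $X = X_1(3,9)$ again has genus~$1$. By Theorem~\ref{Jacobian3,9}, $X$ is the base change to $\Q(\sqrt{-3})$ of the elliptic curve $27a3$; since we are in the setting $\Q(\zeta_3) = \Q(\sqrt{-3}) \subseteq K$, it is a genus-one curve over $\Q(\sqrt{-3})$ carrying a rational cusp. Using that rational point I would identify $X$ with its Jacobian $J = J_1(3,9)$ over $\Q(\sqrt{-3})$, so that $X(K) = J(K)$ as abelian groups and every assertion about $K$-points of $X$ becomes an assertion about $J(K)$.

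Next I would translate the rank hypothesis. By Theorem~\ref{Jacobian3,9} we have $J(K)_{\tors} = J(\Q(\sqrt{-3})) \cong \Z/3 \times \Z/3$, and since $\Q(\sqrt{-3}) \subseteq K$ the condition $\rank J(K) = 0$ forces $J(\Q(\sqrt{-3}))$ to be finite as well. Hence $\rank J(K) = 0$ is equivalent to $X(K) = X(\Q(\sqrt{-3}))$, while $\rank J(K) \ge 1$ is equivalent to $X(K)$ being infinite.

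The one genuinely external input the argument rests on is that every $\Q(\sqrt{-3})$-point of $X$ is a cusp, i.e.\ $Y_1(3,9)(\Q(\sqrt{-3})) = \varnothing$; this is the analogue of the fact $Y(\Q(\sqrt{-3})) = \varnothing$ used in Theorem~\ref{18torsion}. A non-cuspidal $\Q(\sqrt{-3})$-point would yield an elliptic curve over the quadratic field $\Q(\sqrt{-3})$ whose Mordell-Weil group contains $\Z/3 \times \Z/9$; but $\Z/3 \times \Z/9$ is not contained in any of the groups on the classification list for quadratic fields (\cite{Kamienny}), the largest entries with full $3$-torsion being $\Z/3 \times \Z/6$. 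Thus all nine points of $X(\Q(\sqrt{-3}))$ are cusps, and I expect this to be the only step requiring an external theorem rather than formal moduli-theoretic bookkeeping.

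Finally I would close the cycle. If $\rank J(K) = 0$ then $X(K) = X(\Q(\sqrt{-3}))$ consists only of cusps, so a non-cuspidal $K$-point would be a non-cuspidal $\Q(\sqrt{-3})$-point, which is impossible; hence there is no elliptic curve over $K$ with $\Z/3 \times \Z/9$ in its Mordell-Weil group, giving (\ref{3,9torsion1}) $\Rightarrow$ (\ref{3,9rank}). Conversely, if $\rank J(K) \ge 1$ then $X(K)$ is infinite, and as $X$ has only finitely many cusps it then has infinitely many non-cuspidal $K$-points; since $\mathscr{M}_1(3,9)[1/9]$ is a scheme — the point of order $9$ rigidifies the level structure, exactly as for $X_1(N)$ with $N \ge 4$ — these correspond to infinitely many pairwise non-isomorphic elliptic curves over $K$ with the prescribed torsion. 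This yields (\ref{3,9rank}) $\Rightarrow$ (\ref{3,9torsionfinite}), and (\ref{3,9torsionfinite}) $\Rightarrow$ (\ref{3,9torsion1}) is trivial, completing the equivalences.
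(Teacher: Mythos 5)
Your proposal is correct and takes essentially the same route as the paper, whose proof is simply the declaration that it is ``completely the same as the proof of Theorem \ref{11torsion}'': identify the genus-one curve $X_1(3,9)$ with its Jacobian over $\Q(\sqrt{-3})$, use Theorem \ref{Jacobian3,9} to translate $\rank J_1(3,9)(K)=0$ into $X(K)=X(\Q(\sqrt{-3}))$, observe that all such points are cusps, and conclude. Your explicit appeal to the quadratic-field classification to justify $Y_1(3,9)(\Q(\sqrt{-3}))=\varnothing$, and to rigidity of the $\Gamma_1(3,9)$ moduli problem for the infinitude of curves, just supplies details the paper leaves implicit.
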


\begin{theorem} \label{4,8torsion}
The following are equivalent:
\begin{enumerate}[(1)]
\item There exists an elliptic curve over $K$ whose Mordell-Weil group contains
	$\Z/4 \times \Z/8$. \label{4,8torsion1}
\item There exist infinitely many such curves. \label{4,8torsionfinite}
\item $\rank J_1(4,8)(K) \ge 1.$ \label{4,8rank}
\end{enumerate}
\end{theorem}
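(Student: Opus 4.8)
The plan is to reproduce the proof of Theorem \ref{11torsion} almost verbatim, the point being that $X = X_1(4,8)$ has genus one. By Theorem \ref{Jacobian4,8} the curve $X$ is the base change to $\Q(\sqrt{-1})$ of the elliptic curve with Cremona label $32a2$; in particular it has a rational point and may be identified with its own Jacobian $J$, so that $X(K) = J(K)$ as sets for every $K \supseteq \Q(\sqrt{-1})$. Writing $Y$ for the complement of the cusps, the criterion recalled in the preliminaries shows that $\mathscr{M}_1(4,8)[1/8]$ is a scheme (a $\Z/4 \times \Z/8$ structure leaves no nontrivial automorphism), so $Y(K)$ is in bijection with the set of $K$-isomorphism classes of elliptic curves equipped with such a structure. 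Thus condition (\ref{4,8torsion1}) reads $Y(K) \neq \varnothing$ and condition (\ref{4,8torsionfinite}) reads that $Y(K)$ is infinite.

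The two easy implications go as in Theorem \ref{11torsion}. For (\ref{4,8rank}) $\Rightarrow$ (\ref{4,8torsionfinite}), if $\rank J(K) \ge 1$ then $X(K) = J(K)$ is infinite; since $X$ has only finitely many cusps, all but finitely many of these points lie on $Y$, and as each elliptic curve carries only finitely many $\Z/4 \times \Z/8$ structures, these points represent infinitely many distinct curves. The implication (\ref{4,8torsionfinite}) $\Rightarrow$ (\ref{4,8torsion1}) is trivial.

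The substance is the contrapositive of (\ref{4,8torsion1}) $\Rightarrow$ (\ref{4,8rank}): I must show that $\rank J(K) = 0$ forces $Y(K) = \varnothing$. Under this hypothesis $X(K) = J(K) = J(K)_{\tors}$, and Theorem \ref{Jacobian4,8} identifies the right-hand side with $J(K_{(8)})$, where $K_{(8)} = K \cap \Q(\zeta_8)$ is either $\Q(\sqrt{-1})$ or $\Q(\sqrt{-1}, \sqrt{2})$. Consequently $X(K) = X(K_{(8)})$, and everything reduces to checking that every point of $X(\Q(\sqrt{-1}))$ and of $X(\Q(\sqrt{-1}, \sqrt{2}))$ is a cusp, exactly the role played by the sentence ``all points of $X(\Q)$ are cusps'' in Theorem \ref{11torsion}.

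This last verification is the only genuine computation and the main obstacle. I would carry it out as a cusp count on the explicit model of $X_1(4,8)$ taken from \cite{Sutherland}: locating the cusps and their fields of definition, one checks that $X_1(4,8)$ has exactly $8$ cusps rational over $\Q(\sqrt{-1})$ and exactly $16$ rational over $\Q(\sqrt{-1}, \sqrt{2})$ (all cusps being defined over $\Q(\zeta_8)$). Since Theorem \ref{Jacobian4,8} gives $\# J(\Q(\sqrt{-1}))_{\tors} = 8$ and $\# J(\Q(\sqrt{-1}, \sqrt{2}))_{\tors} = 16$, the cusps already exhaust $X(K_{(8)})$ in each case, so $Y(K_{(8)}) = \varnothing$ and therefore $Y(K) = \varnothing$, completing the equivalence. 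Alternatively, one could bound $\# X(K)$ by reducing modulo a prime above an odd rational prime, using that the $2$-power torsion of $J(K)$ injects into the reduction and then counting cusps in the residue field, as in the proof of Theorem \ref{16torsion}; but the direct count seems cleanest and parallels Theorem \ref{11torsion} most closely.
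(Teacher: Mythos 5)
Your proposal is correct and follows essentially the same route as the paper, whose entire proof is the remark that ``all proofs are completely the same as the proof of Theorem \ref{11torsion}'': identify the genus-one curve $X_1(4,8)$ with its Jacobian, use Theorem \ref{Jacobian4,8} to get $X(K)=X(K_{(8)})$ when the rank vanishes, and check that all points of $X$ over $\Q(\sqrt{-1})$ and $\Q(\sqrt{-1},\sqrt{2})$ are cusps. Your only addition is to make explicit the cusp count ($16$ cusps, all rational over $\Q(\zeta_8)$, exhausting the order-$16$ group $J(\Q(\zeta_8))$), which is exactly the verification the paper leaves implicit.
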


\begin{theorem} \label{6,6torsion}
The following are equivalent:
\begin{enumerate}[(1)]
\item There exists an elliptic curve over $K$ whose Mordell-Weil group contains
	$\Z/6 \times \Z/6$. \label{6,6torsion1}
\item There exist infinitely many such curves. \label{6,6torsionfinite}
\item $\rank J_1(6,6)(K) \ge 1.$ \label{6,6rank}
\end{enumerate}
\end{theorem}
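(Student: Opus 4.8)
The plan is to follow the proof of Theorem~\ref{11torsion} verbatim, the only new ingredient being a description of the $\Q(\sqrt{-3})$-rational points of $X = X_1(6,6)$. By Theorem~\ref{Jacobian6,6}, $X$ is the base change to $\Q(\sqrt{-3})$ of the elliptic curve $36a1$; in particular it has genus $1$, so fixing a cusp defined over $\Q(\sqrt{-3})$ as the origin, the Abel--Jacobi map gives an isomorphism $X \isom J$ over $\Q(\sqrt{-3})$, whence $X(K) = J(K)$ for every $K \supseteq \Q(\sqrt{-3})$. That same theorem also gives $J(K)_\tors = J(\Q(\sqrt{-3})) \cong \Z/2 \times \Z/6$ for all such $K$.

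From this I would deduce the dichotomy $\rank J(K) = 0 \Leftrightarrow X(K) = X(\Q(\sqrt{-3}))$. Indeed, if the rank is $0$ then $X(K) = J(K) = J(K)_\tors = J(\Q(\sqrt{-3})) = X(\Q(\sqrt{-3}))$, while if the rank is positive then $X(K) = J(K)$ is infinite and $X(\Q(\sqrt{-3}))$ is finite.

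The crucial step, which I expect to carry the real content, is to show that every point of $X(\Q(\sqrt{-3}))$ is a cusp, i.e.\ $Y(\Q(\sqrt{-3})) = \varnothing$. A non-cuspidal $\Q(\sqrt{-3})$-point of $X$ would correspond to an elliptic curve over the quadratic field $\Q(\sqrt{-3})$ whose Mordell--Weil group contains $\Z/6 \times \Z/6$; since this group is absent from the classification of torsion over quadratic fields (\cite{KeMo}, \cite{Kamienny}), no such point exists. Equivalently, one verifies directly that all $12$ points of $J(\Q(\sqrt{-3})) \cong \Z/2 \times \Z/6$ lie among the cusps of $X_1(6,6)$.

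Finally I would assemble the equivalences. The implication (\ref{6,6torsionfinite})~$\Rightarrow$~(\ref{6,6torsion1}) is trivial. For (\ref{6,6torsion1})~$\Rightarrow$~(\ref{6,6rank}) I argue by contraposition: if $\rank J(K) = 0$ then $X(K) = X(\Q(\sqrt{-3}))$ consists of cusps only, so $Y(K) = \varnothing$ and there is no curve as in (\ref{6,6torsion1}). For (\ref{6,6rank})~$\Rightarrow$~(\ref{6,6torsionfinite}), positive rank makes $X(K) = J(K)$ infinite, and deleting the finitely many cusps leaves infinitely many points of $Y(K)$, each (by fineness of the moduli) a distinct elliptic curve over $K$ with $\Z/6 \times \Z/6$ in its Mordell--Weil group. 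As throughout this family of theorems, the standing hypothesis $\Q(\zeta_6) = \Q(\sqrt{-3}) \subseteq K$ is exactly what allows $\Z/6 \times \Z/6$ to be realized at all, by the Weil pairing.
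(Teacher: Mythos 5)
Your proposal is correct and follows essentially the same route as the paper, which simply declares this proof ``completely the same as the proof of Theorem~\ref{11torsion}'': rank zero holds iff $X(K)=X(\Q(\sqrt{-3}))$ (using Theorem~\ref{Jacobian6,6} and the genus-$1$ identification $X \isom J$), and all points of $X(\Q(\sqrt{-3}))$ are cusps, which you justify via the Kenku--Momose/Kamienny classification (equivalently, by matching the $12$ points of $J(\Q(\sqrt{-3}))$ with the $12$ cusps). Your extra care about positive rank giving infinitely many distinct curves and about the standing hypothesis $\Q(\sqrt{-3})\subseteq K$ just fills in details the paper leaves implicit.
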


\section{Rational points obtained by $K$-curves}

In the case $X_1(M,MN)$ has genus $\ge 2$, i.e., for $X_1(N)$ for $N = 13,16,18$
(see \cite{IsMo} and \cite[Theorem 2.3.]{JeKi}. In \cite{IsMo}, there was a mistake to treat $X_\Delta$.),
even if the rank of $J_1(N)(K)$ is not $0$, we do not know the existence of an elliptic curves whose
Mordell-Weil group contains $\Z/N$.
However if the rank is small, generalizing the results of \cite[Theorem 9.]{BBDN},
we obtain a property about such elliptic curves.
Before we state the theorem, note that for $N = 13,16,18$,
there exists only one normalized eigenform of level $\Gamma_1(N)$ up to Galois conjugate
(for example, use Magma.)
and the number field associated to it is quadratic.

\begin{theorem} \label{falseCM}
Let $N = 13,16$ or $18$,
and $F$ be the number field associated to a newform of level $\Gamma_1(N)$.
Let $K$ be a multi-quadratic number field and assume that $\dim_F J(K) \otimes \Q \le 2$ or $[K : \Q] \le 2$.
Then for every elliptic curve $E$ over $K$ which has a torsion point $P$ of order $N$,
there exists a subfield $M$ of $K$ of index $2$ (i.e., $[K:M] = 2$) such that
$E$ is an $M$-curve, i.e., $E$ and its Galois conjugate with respect to $K/M$ is isogenous over $K$.
Moreover we have the following:
\begin{itemize}
\item The case $N = 13$:
    \begin{enumerate}[a.]
    \item The elliptic curve $E$ has false CM by $\Q(\sqrt{-1})$, in particular the rank of $E(K)$ is even.
    \item The elliptic curve $E$ and its Galois conjugate over $M$ is isomorphic over $K$.
    \item There exists a quadratic twist $E'$ of $E$ which is defined over $M$,
        and for every such a twist, $E'$ has CM by $\mathbb{Q}(\sqrt{-1})$ and has a cyclic $N$ isogeny over $M$.
    \end{enumerate}
\item The case $N = 16$:
    \begin{enumerate}[a.]
    \item The elliptic curve $E$ is defined over $M$.
    \item[c.] The cyclic group of order $N$ generated by $P$ is defined over $M$.
    \end{enumerate}
\item The case $N = 18$:
    \begin{enumerate}[a.]
    \item The elliptic curve $E$ has false CM by $\Q(\sqrt{-2})$, in particular the rank of $E(K)$ is even.
    \item There exists an isogeny over $K$ of degree $2$ between $E$ and its Galois conjugate over $M$.
    \end{enumerate}
\end{itemize}
\end{theorem}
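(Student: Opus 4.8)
\emph{Plan.}
First I would reduce everything to modular points. Since $N\ge 4$ the stack $\mathscr{M}_1(N)[1/N]$ is a scheme, so the pair $(E,P)$ is the same datum as a non-cuspidal point $x\in Y_1(N)(K)\subseteq X(K)$, where $X=X_1(N)$ has genus $2$. Fixing a $\Q$-rational cusp $\infty$ (hence $G$-fixed), the Abel--Jacobi map $j_\infty\colon X\hookrightarrow J$, $t\mapsto[t-\infty]$, is a $\operatorname{Gal}(\Kbar/\Q)$-equivariant closed immersion, and I write $P_x:=j_\infty(x)\in J(K)$. Because $S_2(\Gamma_1(N))$ is a single Galois orbit whose coefficient field $F$ is quadratic (as noted before the theorem), $J=J_1(N)$ is $\Q$-isogenous to the simple surface $A_f$, and the $\Q$-rational Hecke algebra gives $\operatorname{End}^0_\Q(J)=F$. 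Writing $G=\operatorname{Gal}(K/\Q)\cong(\Z/2)^{\oplus r}$, the space $V:=J(K)\otimes\Q$ is thus a module over $F[G]$ on which the two actions commute. By Proposition~\ref{exp2twists} it decomposes as $V=\bigoplus_\chi V^\chi$ over the characters $\chi\colon G\to\{\pm1\}$, with $V^\chi\cong J^{(\chi)}(\Q)\otimes\Q$ an $F$-subspace; the hypothesis then reads $\sum_\chi\dim_F V^\chi=\dim_F V\le 2$, the alternative $[K:\Q]\le2$ being the classical quadratic case of \cite{KaNa}.

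The heart of the argument, generalizing \cite[Theorem~9]{BBDN}, is to turn this rank bound into a relation between $x$ and one of its conjugates. Applying the idempotents $e_\chi$ gives $P_{x^\tau}\equiv\sum_\chi\chi(\tau)\,e_\chi P_x$ modulo $J(K)_{\tors}$, and since at most two of the projections $e_\chi P_x$ are non-torsion, the reductions of the conjugates $P_{x^\tau}$ lie in an $F$-space of dimension $\le 2$ and are governed by at most two characters of $G$. Comparing conjugates (a linear-algebra argument over $F[G]$, where the borderline case $\dim_F V=[K:\Q]/2$ needs extra care) then yields a non-trivial $\tau_0\in G$ for which
\[
P_{x^{\tau_0}}-P_x=[x^{\tau_0}-x]\in J(K)_{\tors}.
\]
By Theorems~\ref{Jacobian13},~\ref{Jacobian16} and~\ref{Jacobian18} this torsion group is explicitly known and small, so a finite case analysis is possible. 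Pushing $x$ forward to $\bar x\in X_0(N)(K)$, using that $X_0(N)\hookrightarrow J_0(N)$ is injective, and invoking the action of the Atkin--Lehner involution $w_N$ on the newform $f$, the class $[\bar x^{\tau_0}-\bar x]$ can only be one of the listed torsion classes; matching them shows that either $\bar x^{\tau_0}=\bar x$ or $\bar x^{\tau_0}=w_N\bar x$. In the second case $E^{\tau_0}$ is $N$-isogenous to $E$, so with $M:=K^{\langle\tau_0\rangle}$ (of index $2$) the curve $E$ is an $M$-curve, which is the main conclusion.

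It then remains to extract the finer structure for each $N$ from the geometry of $w_N$ and the building block $A_f$. For $N=16$ the first alternative $\bar x^{\tau_0}=\bar x$ occurs, and a short computation with the universal curve over the model of $X_1(16)$ from \cite{Sutherland} shows that not merely the subgroup but $E$ itself and $\langle P\rangle$ descend to $M$. For $N=13$ and $N=18$ the Atkin--Lehner alternative holds; the degree of the isogeny $E\to E^{\tau_0}$ is read off from $w_N$ (an isomorphism for $N=13$ after accounting for $\langle-1\rangle$, and a $2$-isogeny for $N=18$). The false-CM field $L_N$, namely $\Q(\sqrt{-1})$ for $N=13$ and $\Q(\sqrt{-2})$ for $N=18$, is the imaginary quadratic field by which $A_f$ acquires extra endomorphisms over $\Qbar$; concretely one identifies $E$, respectively a quadratic twist $E'/M$, with a twist of the CM curve of $j$-invariant $1728$, respectively $8000$, the requisite cyclic $N$-isogeny existing because $13$ splits, respectively $2$ ramifies, in $L_N$. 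Since false CM endows $E(K)\otimes\Q$ with the structure of an $L_N$-vector space, $\rank E(K)$ is forced to be even.

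I expect the main obstacle to be the passage from the torsion relation $[x^{\tau_0}-x]\in J(K)_{\tors}$ to the precise geometric dichotomy (descent versus Atkin--Lehner isogeny) and, within the isogeny case, the determination of the exact isogeny degree and of the false-CM field. This step is not formal: it rests on pinning down which torsion classes are actually represented by differences of points on the explicit genus-$2$ models, on the compatibility of $w_N$ with the $F$-action on $A_f$, and on the correct identification of the CM building blocks---computations I would carry out with Magma using the models of \cite{Sutherland} and \cite{Baaziz}.
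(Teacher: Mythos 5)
Your proposal does not follow the paper's route, and it contains two concrete errors that break it. First, your pivotal linear-algebra step seeks the wrong thing. Knowing that at most two non-trivial characters $\chi_1,\chi_2$ of $G=\operatorname{Gal}(K/\Q)$ carry rank (the trivial one carries none since $\operatorname{rank}J(\Q)=0$), you try to produce $\tau_0\neq 1$ with $\chi_i(\tau_0)=1$ for all characters appearing in $P_x$, so that $[x^{\tau_0}-x]$ is torsion. But if $[K:\Q]=4$ and $\chi_1,\chi_2$ are independent with $e_{\chi_1}P_x\neq 0$ and $e_{\chi_2}P_x\neq 0$ --- a situation fully allowed by the hypothesis $\dim_F J(K)\otimes\Q\le 2$ --- then $\ker\chi_1\cap\ker\chi_2=\{1\}$ and no such $\tau_0$ exists; this is exactly your unresolved ``borderline case,'' and it is fatal, not technical. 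The paper's final lemma does the opposite and always succeeds: for two distinct non-zero characters there is $\tau_0$ with $\chi_1(\tau_0)=\chi_2(\tau_0)=-1$, so that $M=K^{\langle\tau_0\rangle}$ has $\operatorname{rank}J(M)=0$. The object placed in this rank-zero group is then not $[x^{\tau_0}-x]$ but the degree-two divisor $D=P+\tau_0 P\in X^{(2)}(M)$: by Lemma \ref{theline}, $(X^{(2)}-\text{the line})(M)$ injects into $J(M)=J(M)_\tors=J(M\cap\Q(\zeta_N))$, and Lemma \ref{exceptional} (a Magma classification of all non-line points of $X^{(2)}$ over the maximal multi-quadratic subfield of $\Q(\zeta_N)$) excludes $D$ from that finite set because $P$ is non-cuspidal and $K$ is multi-quadratic; hence $D$ lies in the line, and Proposition \ref{bbdn}, quoted from \cite{BBDN}, delivers the $M$-curve statement together with every item of fine structure. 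Note also that ``$D$ in the line'' means $\tau_0P=w(P)$ for the hyperelliptic involution $w$, which is \emph{not} equivalent to $[x^{\tau_0}-x]$ being torsion: in genus $2$ one has $[w(P)-P]=[A-2P]$, generically non-torsion, so even the invariant your reduction targets is the wrong one.

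Second, your mechanism for converting the torsion relation into the dichotomy ``$\bar x^{\tau_0}=\bar x$ or $\bar x^{\tau_0}=w_N\bar x$'' cannot work as written: for $N=13,16,18$ the curves $X_0(N)$ have genus $0$, so $J_0(N)=0$ and there is no embedding $X_0(N)\injects J_0(N)$ --- the Abel--Jacobi map there is constant. With that step gone, your case analysis has no engine, and the remaining content of items a--c for each $N$ (false CM by $\Q(\sqrt{-1})$ or $\Q(\sqrt{-2})$, descent of $E$ or of $\langle P\rangle$ to $M$, the $2$-isogeny) is deferred to unspecified computations. That content is precisely what the paper deliberately does not reprove: it is imported wholesale from \cite[sections 4.4--4.6]{BBDN} via Proposition \ref{bbdn}, whose hypothesis (``$D$ is in the line'') is the only thing the paper actually verifies, by the rank-zero-subfield argument and the computation in Lemma \ref{exceptional}. (A minor point: the quadratic case $[K:\Q]\le 2$ is Theorem 9 of \cite{BBDN}, not \cite{KaNa}, and in the paper's argument it needs no separate treatment, since $M=\Q$ already has rank zero.)
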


For the definition of false CM, see \cite{BBDN}.

We show it by computing divisors of the modular curve $X$ we consider
which generates the torsion part of the Mordell-Weil group of $J(K)$.

First we recall the following fundamental lemma.

\begin{lemma} \label{theline}
Let $k$ be a field of characteristic $\ne 2$,
$X$ a hyperelliptic curve of genus $g \ge 2$ over $k$,
$J$ its Jacobian,
$x : X \to \PP^1$ a morphism of degree $2$,
and $A$ be the divisor of degree $2$ of $X$ which is the pullback of a point under $x$.
Let $\varphi : X^{(2)} \to J$ be the map $D \mapsto [D-A]$, where $X^{(2)}$ is the 2nd symmetric product of $X$.
Then the fiber of $\varphi$ at $0 \in J(k)$ is isomorphic to $\PP^1$ (we call it "the line"),
and $\varphi$ gives an open immersion from $X^{(2)} - \text{(the line)}$ to $J - 0$.
(If $g = 2$, then this map is in actually an isomorphism, but we will not need it.)
\end{lemma}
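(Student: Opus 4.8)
The plan is to study the Abel--Jacobi map $u \colon X^{(2)} \to \operatorname{Pic}^2(X)$, $D \mapsto [D]$, of which $\varphi$ is simply the translate by $-[A]$; in particular $\varphi$ and $u$ have the same fibres. The fibre of $\varphi$ over $0$ is then the set of effective degree-$2$ divisors linearly equivalent to $A$, that is, the complete linear system $|A|$. Since $A$ is a fibre of the degree-$2$ map $x$, it realizes the hyperelliptic pencil, so $h^0(\mathcal{O}_X(A)) = 2$ and $|A| = \PP(H^0(X, \mathcal{O}_X(A))) \cong \PP^1$; this system is the unique $g^1_2$ (the canonical system when $g = 2$), hence Galois-stable and defined over $k$, giving $\PP^1_k$, which is ``the line.'' Because $X^{(2)}$, $J$, $u$, and $|A|$ are all formed compatibly with base change, I may verify the remaining geometric assertions after extending scalars to $\kbar$ and then descend.

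First I would prove that $\varphi$ is injective on $U := X^{(2)} \setminus |A|$. If $\varphi(D_1) = \varphi(D_2)$ then $D_1 \sim D_2$; were $D_1 \ne D_2$, the pencil they span would be a $g^1_2$, forcing $h^0(\mathcal{O}_X(D_1)) \ge 2$. The crucial input is that a hyperelliptic curve of genus $\ge 2$ carries a unique $g^1_2$, namely $|A|$; hence $D_1, D_2 \in |A|$, contradicting $D_i \in U$. Thus $u|_U$ is injective, and its image is exactly $W_2 \setminus \{[A]\}$, where $W_2 \subseteq \operatorname{Pic}^2(X)$ is the image of $u$: indeed, for any class $w \ne [A]$ the unique effective divisor in $w$ satisfies $h^0 = 1$, so the fibre over $w$ is a single reduced point.

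Next I would show $u|_U$ is unramified by computing its differential. Using the sequence $0 \to \Omega^1_X(-D) \to \Omega^1_X \to \Omega^1_X|_D \to 0$ together with Serre duality, the codifferential of $u$ at $D$ is identified with the evaluation map $H^0(X, \Omega^1_X) \to H^0(\Omega^1_X|_D)$, which is surjective precisely when $h^0(\mathcal{O}_X(D)) = 1$ (by Riemann--Roch, $h^0(K_X - D) = h^0(\mathcal{O}_X(D)) + g - 3$, so the image has dimension $2$ exactly then). Hence $du$ is injective along $U$, so $u|_U$ is unramified, and it is birational onto the irreducible surface $W_2 \setminus \{[A]\}$. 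The same differential computation shows $\dim T_{[D]} W_2 = 2$ for $[D] \ne [A]$, so $W_2$ is smooth, hence normal, away from $[A]$; by Zariski's Main Theorem a birational quasi-finite morphism onto a normal target is an isomorphism onto its (locally closed) image. Translating by $-[A]$, this gives that $\varphi$ restricts to an isomorphism of $U$ onto the locally closed subvariety $(W_2 - [A]) \setminus \{0\}$ of $J \setminus \{0\}$, i.e.\ an immersion that is an open immersion onto its image. When $g = 2$ one has $W_2 = \operatorname{Pic}^2(X)$, so the image is all of $J \setminus \{0\}$ and $\varphi|_U$ is an isomorphism; concretely $u$ is then the contraction of the $(-1)$-curve $|A|$ exhibiting $X^{(2)}$ as the blow-up of $\operatorname{Pic}^2(X)$ at $[A]$.

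The main obstacle is this last step: upgrading ``injective with injective differential'' to a genuine immersion. I expect the cleanest route to be the normality plus Zariski's Main Theorem argument sketched above, which sidesteps the fact that a finite-type monomorphism need not in general be an immersion; the point needing care is the smoothness (hence normality) of $W_2$ away from $[A]$, for which I would quote the Riemann--Kempf description $\operatorname{Sing}(W_2) = W_2^1 = \{[A]\}$, or else bound $\dim T_{[D]} W_2$ from above directly. A secondary point is descent from $\kbar$ to $k$, but since being an immersion and the formation of $|A|$, $u$, and $W_2$ all commute with the faithfully flat extension $\kbar/k$, this is routine.
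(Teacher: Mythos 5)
Your argument is correct (granting the standard facts you cite), but it takes a genuinely different route from the paper's. The paper's own proof is a two-sentence sketch built on the same first ingredient as yours --- uniqueness of the $g^1_2$ on a hyperelliptic curve of genus $\ge 2$, phrased there as uniqueness of the degree-$2$ map to $\PP^1$ --- but it derives the scheme-theoretic conclusion from the modular interpretation: $X^{(2)}$ is the Hilbert scheme of effective degree-$2$ Cartier divisors, so the Abel--Jacobi map $u\colon X^{(2)}\to\operatorname{Pic}^2(X)$ is the projectivization of a coherent sheaf on $\operatorname{Pic}^2(X)$ whose fiber at a class $[L]$ is $H^0(X,L)$; uniqueness of the $g^1_2$ says this sheaf has rank one away from $[A]$, so $u$ is an isomorphism over $\operatorname{Pic}^2(X)\setminus\{[A]\}$, while over $[A]$ the fiber is $\PP(H^0(X,\mathcal{O}_X(A)))\cong\PP^1$ with its reduced structure. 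That route is characteristic-free, requires no smoothness input, and gives the scheme structure of every fiber at once. Your route --- pointwise injectivity, unramifiedness via the codifferential-equals-evaluation computation, then normality of $W_2\setminus\{[A]\}$ plus Zariski's Main Theorem --- is the classical ACGH-style argument; it avoids the projective-bundle machinery, but at the cost of importing Riemann--Kempf, which is your weakest link: as you yourself flag, ``the same differential computation shows $\dim T_{[D]}W_2=2$'' is not literally a consequence of your computation (it controls the image of $du$, not the tangent space of the image), so the citation is genuinely needed, and one should also check its availability over an arbitrary field of characteristic $\ne 2$. Note that you can discard both Riemann--Kempf and ZMT using only what you already proved: put $V=\operatorname{Pic}^2(X)\setminus\{[A]\}$ and $U=u^{-1}(V)=X^{(2)}\setminus|A|$; then $u|_U\colon U\to V$ is proper, being the base change of the proper map $u$; your injectivity argument works verbatim over every field extension, hence gives universal injectivity, and combined with unramifiedness this makes the diagonal of $u|_U$ a surjective open immersion, so $u|_U$ is a monomorphism; a proper monomorphism is a closed immersion. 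This yields the immersion statement directly, and for $g=2$ the isomorphism onto $J-0$, since a closed immersion of integral schemes with dense image is an isomorphism.
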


\begin{proof}
Since the degree $2$ map $X \to \PP^1$ is induced by the morphism defined by the canonical divisor,
$X$ has only one degree $2$ map to $\PP^1$ up to automorphisms of $X$ and of $\PP^1$.
Since the $d$-th symmetric product is the Hilbert scheme which classifies effective relative Cartier divisors of degree $d$,
we have the result.
\end{proof}

We quote the crucial proposition from \cite[section 4.4, 4.5, 4.6]{BBDN}:

\begin{proposition}[\cite{BBDN}] \label{bbdn}
Let $N = 13,16$ or $18$.
Let $K/M$ be a quadratic extension of number fields with the Galois group $\left< \sigma \right>$,
$P \in Y_1(N)(K)$ a point associated to an elliptic curve $E$,
and let $D := P + \sigma P \in X^{(2)}(M)$.
If $D$ is in "the line", then $E$ and $\sigma E$ are isogenous over $K$,
and moreover we have the same results as in the Theorem \ref{falseCM}.
\end{proposition}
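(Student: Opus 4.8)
The plan is to convert the geometric hypothesis that $D$ lies in the line into an explicit modular relation between $P$ and $\sigma P$, and then to extract the arithmetic of $E$ versus $\sigma E$ from the known automorphisms of $X_1(N)$. First I would unwind Lemma~\ref{theline}: the line is the fibre $\varphi^{-1}(0)$, which consists exactly of the effective degree-$2$ divisors $D$ with $D \sim A$, that is, of the fibres of the hyperelliptic map $x\colon X \to \PP^1$. As that degree-$2$ map is canonical, its fibres are precisely the orbits $\{Q, w(Q)\}$ of the hyperelliptic involution $w$. Hence $D = P + \sigma P$ lies in the line if and only if $\sigma P = w(P)$, the degenerate subcase $\sigma P = P$ meaning that $P$, and with it $E$, is already defined over $M$. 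Everything then reduces to interpreting $w$ on the moduli.

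The central step is to identify $w$ with a concrete modular automorphism of $X_1(N)$ for each of the three levels, for which I would use the explicit models and involutions recorded in \cite{BBDN}. For $N = 13$ the diamond group $(\Z/13)^\ast/\{\pm1\}$ is cyclic of order $6$ and contains the involution $\langle 5\rangle$ (note $5^2 \equiv -1 \bmod 13$); for $N = 16$ the group $(\Z/16)^\ast/\{\pm1\}$ is cyclic of order $4$ with unique involution $\langle 9\rangle$ (note $9^2 \equiv 1 \bmod 16$); in both cases one matches the diamond involution with the unique hyperelliptic involution $w$. A diamond operator sends $(E,Q)$ to $(E, dQ)$ on the same curve, so $\sigma P = w(P)$ yields an isomorphism $\phi\colon \sigma E \xrightarrow{\sim} E$ over $K$ carrying $\sigma Q$ to $dQ$; this is the isomorphism statement. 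For $N = 18$, however, $(\Z/18)^\ast/\{\pm1\}$ has odd order $3$, so $w$ is no diamond; I would instead identify it with the Atkin--Lehner involution $w_2$ at the factor $2$ of $18$, which sends $(E,Q)$ to a quotient $(E/C, \cdot)$ by an order-$2$ subgroup and therefore realises $\sigma P = w(P)$ as a degree-$2$ isogeny $E \to \sigma E$ over $K$.

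The third step is to run the standard descent/cocycle analysis to recover the fine statements of Theorem~\ref{falseCM}. In the diamond cases the composite $\phi^\sigma \circ \phi \in \operatorname{Aut}(\sigma E) = \{\pm 1\}$ acts on the $N$-torsion as multiplication by $d^2$: for $N = 13$ this is $d^2 \equiv -1$, so the Weil cocycle is nontrivial, $E$ need not descend, and the operator squaring to $-1$ exhibits the false CM by $\Q(\sqrt{-1})$ together with the even-rank consequence; twisting by the quadratic character of $K/M$ then produces a model $E'$ over $M$ whose false CM becomes genuine CM and whose cyclic $N$-structure becomes a cyclic $N$-isogeny over $M$. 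For $N = 16$ instead $d^2 \equiv 1$, so the cocycle is trivial and $E$, together with the $\sigma$-stable group $\langle Q\rangle$, descends to $M$. For $N = 18$ the composite of the degree-$2$ isogeny with its $\sigma$-conjugate is a degree-$4$ endomorphism whose structure gives the false CM by $\Q(\sqrt{-2})$ and, again, even rank.

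The hard part will be the explicit identification of $w$ with the named modular operators and, above all, the bookkeeping of fields of definition in the last step: certifying that the isomorphism or isogeny is defined over $K$ and not merely over $\Kbar$, computing the Weil cocycle to decide whether $E$ descends (as for $N=16$) or only a quadratic twist does (as for $N=13$), and verifying that the resulting endomorphism algebra is the predicted imaginary quadratic field rather than a larger one. This is precisely the content of \cite[\S4.4--4.6]{BBDN}, whose explicit equations and involution formulas I would invoke to make each of these identifications rigorous.
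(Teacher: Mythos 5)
The paper does not actually prove this proposition: it is quoted verbatim from \cite[\S 4.4--4.6]{BBDN}, and your sketch is an accurate outline of precisely those sections---reducing membership in ``the line'' to $\sigma P = w(P)$, identifying the hyperelliptic involution $w$ with $\langle 5\rangle$, $\langle 9\rangle$, and the degree-$2$-isogeny operator for $N=13,16,18$ respectively, then running Weil descent---with the hard explicit identifications deferred back to that same source. So your proposal is correct and takes essentially the same route as the paper, namely citing/reconstructing BBDN's argument.
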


By this proposition, what we need to show is that there are no "exceptional" elliptic curves,
i.e., we need to show that every elliptic curve gives a divisor in "the line".
We check it by computing the Mordell-Weil group explicitly using Magma:

\begin{lemma} \label{exceptional}
Let $N = 13,16$ or $18$,
$K$ the maximal multi-quadratic number field contained in $\Q(\zeta_N)$.
Fix a model of $X_1(N)$ mentioned in this paper.
Then for every $D \in X_1(N)^{(2)}(K)$ which is not in "the line",
we have that $D$ is either $P_1 + P_2$ for $P_i \in X_1(N)(K)$ or
$P + \sigma P$ for a quadratic point $P$ of $X_1(N)_K$, where $\sigma$ is the non-trivial element
of the Galois group of $K(P)/K$.
And in the later case, we have that the field extension $K(P)/\Q$ is not multi-quadratic.
\end{lemma}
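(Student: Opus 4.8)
The plan is to exploit that for $N = 13, 16, 18$ the curve $X = X_1(N)$ has genus $2$, so Lemma \ref{theline} applies in its strongest form: the morphism $\varphi \colon X^{(2)} - (\text{the line}) \to J - 0$ is an open immersion which, since $g = 2$, is in fact an isomorphism of schemes, and hence induces a bijection on $K$-points
\[
\{\, D \in X^{(2)}(K) : D \notin \text{the line} \,\} \;\xrightarrow{\ \sim\ }\; J(K) - 0 .
\]
So I would first show the right-hand side is finite and list it, then read off the structure of each $D$. To get finiteness I would prove $\rank J(K) = 0$: since $K/\Q$ is abelian of exponent $2$, Proposition \ref{exp2twists} gives $\rank J(K) = \sum_d \rank J^{(d)}(\Q)$, the sum over the quadratic twists of $X$ trivialized by $K$, and each summand is made to vanish by a $2$-descent over $\Q$, exactly as in the proofs of Section \ref{Jacobian}. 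Combined with Theorems \ref{Jacobian13}, \ref{Jacobian16} and \ref{Jacobian18} this identifies $J(K) = J(K)_{\tors}$ with an explicit finite group, so the displayed set is finite.

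Next, for each nonzero class $c \in J(K)$ I would compute in Magma, via its reduced (Mumford) representation $\langle u(x), v(x)\rangle$ on the fixed model, the unique effective $K$-divisor $D_c$ of degree $2$ with $[D_c - A] = c$ and $D_c \notin \text{the line}$. A $K$-rational effective divisor of degree $2$ has $\operatorname{Gal}(\overline{K}/K)$-stable support of size $2$, so exactly one of two things occurs, detected by the factorization of $u$ over $K$: either $u$ splits into (distinct or repeated) linear factors, so both $x$-coordinates lie in $K$, hence so do the $y$-coordinates $v(x_i) \in K$, and $D_c = P_1 + P_2$ with $P_i \in X(K)$; or $u$ is irreducible over $K$, in which case the two points are conjugate (and necessarily distinct from their hyperelliptic images, since otherwise $D_c$ would lie on the line), so $D_c = P + \sigma P$ with $P$ of residue field $K(P) = K[x]/(u(x))$ quadratic over $K$ and $\sigma$ the nontrivial element of $\operatorname{Gal}(K(P)/K)$.

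It remains to treat the quadratic case, and this is the substantive step. For each such $c$ I would compute the number field $K(P) = K[x]/(u(x))$ explicitly and verify that $K(P)/\Q$ is \emph{not} multi-quadratic. Writing $K(P) = K(\sqrt{\delta})$ with $\delta \in K^\times$, this amounts to checking that the class of $\delta$ in $K^\times/(K^\times)^2$ is not represented by a rational number — equivalently, that the Galois group over $\Q$ of the Galois closure of $K(P)$ is not an elementary abelian $2$-group; since $[K(P):\Q] = 2[K:\Q]$ and $K/\Q$ is already multi-quadratic, $K(P)/\Q$ is multi-quadratic precisely when $K(P) = K(\sqrt{m})$ for some $m \in \Q^\times$, which is exactly what this check excludes. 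The hard part will be this final verification: unlike the preceding formal reductions it is not automatic, and it encodes the real content of the lemma, namely that these ``extra'' divisors do not descend to elliptic curves carrying the level structure over a multi-quadratic field. The only other genuinely non-formal input is the vanishing of the twisted ranks by $2$-descent, on which the finiteness of the entire enumeration rests.
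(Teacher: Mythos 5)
Your proposal is correct and takes essentially the same route as the paper: establish finiteness via $\rank J(K)=0$ (Proposition \ref{exp2twists} plus $2$-descent on the twists), enumerate the finitely many classes of $J(K)$ through their Mumford representations in Magma, split into the two cases according to whether $u$ factors over $K$, and verify computationally that in the irreducible case the resulting quadratic extension $K(P)$ is not multi-quadratic over $\Q$. The only organizational difference is that you invoke the genus-$2$ isomorphism of Lemma \ref{theline} to get a bijection between $(X^{(2)}-\text{the line})(K)$ and $J(K)-0$, whereas the paper uses only the open immersion and instead compares the count of cuspidal pairs ($18$, $47$, $50$) and of irreducible Mumford pairs against the bound $\#J(K)-1$ ($18$, $79$, $62$) to account for all classes.
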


\begin{proof}
Note that we know that $Y(K) = \varnothing$ and know $\# X(K)$.
(By Theorem \ref{Jacobian13}, \ref{Jacobian16} and \ref{Jacobian18},
we have that the rank of $J(K)$ is zero, so by Theorem \ref{13torsion}, \ref{16torsion} and \ref{18torsion},
there are no non-cuspidal points in $X(K)$.)
Since we know a model of $X$ and the equation whose roots are precisely the $x$-coordinates of the cusps,
we can compute the number of elements of $(X^{(2)} - \text{the line})(K)$ generated by the cusps,
i.e., the number of divisors of the form of $P_1 + P_2$ for $P_i \in X_1(N)(K)$ satisfying $x(P_1) \ne x(P_2)$.
It is $18$ for $N = 13$, $47$ for $N = 16$ and $50$ for $N = 18$.
On the other hand, by Lemma \ref{theline} and by Theorem \ref{Jacobian13}, \ref{Jacobian16} and \ref{Jacobian18},
we know that the number of elements of $(X^{(2)} - \text{the line})(K)$ is less than or equal to
$18$ for $N = 13$, $79$ for $N = 16$ and $62$ for $N = 18$.

According to Magma, we know the Mumford representation of every element of $J(K)$,
i.e., we know pairs $(a(T), b(T))$, where $a,b$ are polynomial over $K$,
which give all elements of $J(K).$
For such a pair $(a,b)$, if $a$ is of degree $2$ and is irreducible,
then for its root $\alpha$, we have that $P = (\alpha, b(\alpha))$ is a quadratic point of $X_K$,
and moreover $x(P) \ne x(\sigma P)$, i.e., the divisor $P + \sigma P$ is not in "the line",
where $\sigma$ is the non-trivial conjugate of $K(P)/K$.
Using Magma, we obtain that the number of such pairs $(a,b)$ is $32 = 79 - 47$ if $N = 16$ and $12 = 62 - 50$ if $N = 18$.
Finally for such a pair, we have that the splitting field of $a$ over $K$ is not a
multi-quadratic number field over $\Q$.
This gives the result.
\end{proof}

\begin{proof}[Proof of Theorem \ref{falseCM}]
Assume that there exists $P \in Y(K)$ such that for every subfield $M$ of $K$ of index $2$,
we have that $P + \sigma P \in X^{(2)}(M)$ is not in "the line",
where $\sigma$ is the non-trivial conjugate of $K/M$.
Then we have, for every such a field $M$, that the dimension of $J(M) \otimes \Q$ over $F$ is $\ge 1$:
Suppose the contrary, i.e., the rank of $J(M)$ is zero for every such a field $M$.
Then $J(M) = J(M)_\tors$, and it is $J(M \cap \Q(\zeta_N))$ by section \ref{Jacobian}.
Thus since $(X^{(2)} - \text{the line})(M)$ injects into $J(M)$,
we have that $P + \sigma P$ is in $(X^{(2)} - \text{the line})(M \cap \Q(\zeta_N))$.
Since $P$ is not cuspidal, by Lemma \ref{exceptional}, this is a contradiction,
i.e., we have that $\dim_F J(M) \otimes \Q \ge 1$.
Therefore by Proposition \ref{bbdn} and by lemma below, we have the results.
\end{proof}

\begin{lemma}
Let $k$ be a field, $K$ a multi-quadratic extension of $k$ of degree $\ge 4$
and $J$ be the Jacobian variety of a hyperelliptic curve over $k$,
which has an action of an order of a number field $F$.
Assume for every subfield $M$ of $K$ of index $2$ that $\dim_F J(M) \otimes \Q \ge 1$.
Then we have that $\dim_F J(K) \otimes \Q \ge 3$.
\end{lemma}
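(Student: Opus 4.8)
The plan is to recast the statement as a question about the $F[G]$-module $V := J(K)\otimes_\Z\Q$, where $G := \operatorname{Gal}(K/k)$. Since $K/k$ is multi-quadratic of degree $\geq 4$, the group $G$ is an elementary abelian $2$-group of order $2^r$ with $r \geq 2$, and every character $\chi \colon G \to \{\pm 1\}$ is $\pm 1$-valued, hence defined over $\Q \subseteq F$. Because the order of $F$ acts on $J$ over $k$, its action on points commutes with the Galois action, so $V$ is a module over the group algebra $F[G]$; as $F[G]$ is then isomorphic to a product of copies of $F$, the module $V$ decomposes as a sum of eigenspaces $V = \bigoplus_{\chi \in \hat G} V_\chi$, each an $F$-vector space of dimension $d_\chi := \dim_F V_\chi$. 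First I would record the two translations that make the problem combinatorial: for a subgroup $H \leq G$ with fixed field $M = K^H$ one has $J(M)\otimes_\Z\Q = V^H = \bigoplus_{\chi|_H = 1} V_\chi$, and the subfields $M$ with $[K:M]=2$ correspond exactly to the order-$2$ subgroups $H = \langle h\rangle$, i.e.\ to the nonidentity elements $h \in G$.

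Under these translations the hypothesis reads: for every $h \neq e$ in $G$, $\sum_{\chi(h)=1} d_\chi \geq 1$, while the goal is $\sum_{\chi} d_\chi \geq 3$. The linchpin is the vanishing of the trivial eigenspace $V_{\chi_0} = V^G = J(k)\otimes_\Z\Q$: in the situations where the lemma is applied $k=\Q$ and $J = J_1(N)$ for $N\in\{13,16,18\}$, so $J(\Q)$ is finite by Theorems \ref{Jacobian13}, \ref{Jacobian16} and \ref{Jacobian18}, whence $d_{\chi_0}=0$. This input is genuinely needed, since a one-dimensional $V$ with trivial $G$-action would satisfy every hypothesis vacuously while having dimension $1$; so I would build $d_{\chi_0}=0$ explicitly into the proof. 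Granting it, each nonidentity $h$ is killed by some nontrivial $\chi$ with $d_\chi > 0$, and writing $S := \{\chi \neq \chi_0 : d_\chi > 0\}$ for the support this says precisely that $\bigcup_{\chi \in S}\ker\chi = G$.

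It then remains to bound $|S|$ from below, for which I would use the elementary fact that an elementary abelian $2$-group of rank $r \geq 2$ cannot be covered by two index-$2$ subgroups: for two distinct such $H_1 \neq H_2$ one has $H_1 H_2 = G$, hence $|H_1 \cap H_2| = 2^{r-2}$ and $|H_1 \cup H_2| = 2^r - 2^{r-2} < 2^r$, while $|H_1 \cup H_2| = 2^{r-1} < 2^r$ if $H_1 = H_2$ (this is exactly where $r \geq 2$, i.e.\ degree $\geq 4$, is used). Since each $\ker\chi$ with $\chi \in S$ is an index-$2$ subgroup and these kernels cover $G$, we must have $|S| \geq 3$; as $d_\chi \geq 1$ for each $\chi \in S$, this gives $\dim_F V = \sum_\chi d_\chi \geq |S| \geq 3$, as claimed.

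The routine parts — the group-algebra decomposition, the identification $J(M)\otimes\Q = V^H$, and the two-subgroup covering estimate — I expect to present with little friction. The one load-bearing step, and the place I would be most careful, is the vanishing of the base-field eigenspace $V_{\chi_0} = J(k)\otimes\Q$: it is precisely what prevents the trivial character from satisfying all of the inequalities for free, and it is supplied in the intended applications by the finiteness of $J_1(N)(\Q)$.
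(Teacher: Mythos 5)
Your proof is correct, and at its core it is the same argument as the paper's: the paper invokes Proposition~\ref{exp2twists} to split the rank along quadratic twists --- the twists, indexed by the characters $\chi$ of $G = \operatorname{Gal}(K/k)$, are exactly your eigenspaces $V_\chi$ --- and then reduces to the $\F_2$-linear-algebra fact that for two distinct nonzero points $x,y$ of an $\F_2$-vector space there is a hyperplane through $0$ avoiding both. Read in the dual space $\widehat{G}$, that fact is precisely the contrapositive of your covering statement: an element $h \in G$ lying outside $\ker x \cup \ker y$ is the same thing as a hyperplane of $\widehat{G}$ missing $x$ and $y$, i.e.\ an index-$2$ subfield $M = K^{\langle h \rangle}$ with $\dim_F J(M)\otimes\Q = 0$. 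So the two write-ups differ only in which side of the duality between $G$ and $\widehat{G}$ they work on.

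Where you genuinely add something is the trivial character, and you are right to insist on it: the lemma as literally stated is false, since if $\dim_F J(k)\otimes\Q = 1$ and the rank does not grow in $K$, then every index-$2$ subfield satisfies the hypothesis while the conclusion fails. The paper's proof masks this by quantifying over \emph{nonzero} points $x,y$, which amounts to silently assuming that the trivial twist $J$ itself has rank $0$ over $k$. That unstated hypothesis does hold in the only application (the proof of Theorem~\ref{falseCM}, where $k = \Q$ and $J_1(N)(\Q)$ is finite for $N = 13, 16, 18$ by Theorems~\ref{Jacobian13}, \ref{Jacobian16}, \ref{Jacobian18}), so the paper's results are unaffected, but the lemma itself needs the extra assumption $\dim_F J(k)\otimes\Q = 0$. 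Your decision to build $d_{\chi_0} = 0$ explicitly into the statement is the correct repair, and your proof of the repaired statement is complete.
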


\begin{proof}
By Lemma \ref{exp2twists}, this is an easy proposition of linear algebra over $\F_2$:
For a vector space $V$ over $\F_2$ and for distinct nonzero points $x,y \in V$,
there exists a hyperplane $H \ni 0$ which does not contain neither $x$ nor $y$.
\end{proof}

\textbf{Acknowledgements.}
We would like to thank my supervisor Takeshi Saito for his kind and helpful advice.

\begin{bibdiv}
\begin{biblist}

\bib{Baaziz}{article}{
      author={Baaziz, Houria},
       title={Equations for the modular curve $X_1(N)$ and models of elliptic curves with torsion points},
        date={2010},
     journal={Math. Comp.},
      volume={79},
       pages={2371-2386 },
   url={https://doi.org/10.1090/S0025-5718-10-02332-X }
}

\bib{BBDN}{article}{
      author={Bosman, Johan},
      author={Bruin, Peter},
      author={Dujella, Andrej},
      author={Najman, Filip},
       title={Ranks of elliptic curves with prescribed torsion over number fields},
        date={2014},
     journal={Int. Math. Res. Not. IMRN},
      volume={2014},
       pages={2885 \ndash 2923}
}

\bib{Magma}{article}{
   author={Bosma, Wieb},
   author={Cannon, John},
   author={Playoust, Catherine},
   title={The Magma algebra system. I. The user language},
   note={Computational algebra and number theory (London, 1993)},
   journal={J. Symbolic Comput.},
   volume={24},
   date={1997},
   number={3-4},
   pages={235--265},
   issn={0747-7171},
   doi={10.1006/jsco.1996.0125},
}

\bib{DeSu}{article}{
      author={Derickx, Maarten},
      author={Sutherland, Andrew~V.},
       title={Torsion subgroups of elliptic curves over quintic and sextic number fields},
        date={2017},
     journal={Proc. Amer. Math. Soc.},
      volume={145},
       pages={4233-4245},
}

\bib{IsMo}{article}{
      author={Ishii, N},
      author={Momose, N},
       title={Hyperelliptic modular curves},
        date={1991},
     journal={Tsukuba J. Math.},
      volume={15},
       pages={413 \ndash 423}
}

\bib{JeKi}{article}{
      author={Jeon, Daeyeol},
      author={Kim, Chang Heon},
       title={On the arithmetic of certain modular curves},
        date={2007},
     journal={Acta Arith.},
      volume={130},
       pages={181 \ndash 193}
}

\bib{Kamienny}{article}{
      author={Kamienny, Sheldon},
       title={Torsion points on elliptic curves and $q$-coefficients of modular forms},
        date={1992},
     journal={Inventiones {M}athematicae},
      volume={109},
      number={1},
       pages={221\ndash 229},
}

\bib{KaNa}{article}{
      author={Kamienny, Sheldon},
      author={Najman, Filip},
       title={Torsion groups of elliptic curves over quadratic fields},
        date={2012},
     journal={Acta. Arith.},
      volume={152},
       pages={291-305},
}

\bib{KeMo}{article}{
      author={Kenku, M.~A.},
      author={Momose, Fumiyuki},
       title={Torsion points on elliptic curves defined over quadratic fields},
        date={1988},
        ISSN={0027-7630},
     journal={Nagoya Math. J.},
      volume={109},
       pages={125\ndash 149},
         url={https://doi.org/10.1017/S0027763000002816},
}

\bib{Krumm}{article}{
      author={Krumm, David},
       title={Quadratic points on modular curves},
        date={2013},
     journal={Ph.D. thesis, University of Georgia},
}

\bib{LaLo}{article}{
      author={Laska, Michael},
      author={Lorenz, Martin},
       title={Rational points on elliptic curves over $\Q$ in elementary abelian 2-extensions of $\Q$},
        date={1985},
     journal={J. Reine Angew Math.},
      volume={355},
       pages={163-172},
}

\bib{Mazur}{article}{
   author={Mazur, Barry},
   title={Modular curves and the Eisenstein ideal},
   journal={Publications Math\'ematiques de l'IH\'ES},
   volume={47},
   date={1977},
   ISSN={},
   pages={33 \ndash 186},
}

\bib{Najman}{article}{
      author={Najman, Filip},
       title={Complete classification of torsion of elliptic curves over quadratic cyclotomic fields},
        date={2010},
     journal={J. Number Theory},
      volume={130},
       pages={1964-1968},
}

\bib{Najman2}{article}{
      author={Najman, Filip},
       title={Torsion of rational elliptic curves over cubic fields and sporadic points on {$X_1(n)$}},
        date={2016},
        ISSN={1073-2780},
     journal={Math. Res. Lett.},
      volume={23},
      number={1},
       pages={245\ndash 272},
         url={https://doi.org/10.4310/MRL.2016.v23.n1.a12},
}

\bib{Sutherland}{article}{
      author={Sutherland, Andrew~V.},
       title={Optimized equations for $X_1(m,mn)$},
     journal={\url{http://math.mit.edu/~drew/X1mn.html}},
}

\end{biblist}
\end{bibdiv}

\end{document}